\newtheorem{prop}[theorem]{Proposition}
\newtheorem{cor}[theorem]{Corollary}
\newcounter{tenumerate}
\def\P{\mathbb{P}}
\newcommand{\deq}{\stackrel{\scriptscriptstyle\triangle}{=}}
\renewcommand{\epsilon}{\varepsilon}
\newcommand{\1}{\mathbf{1}}
\DeclareMathOperator{\var}{Var}
\newcommand{\E}{{\mathbb E}}
\newcommand{\remove}[1]{}
\renewcommand{\leq}{\leqslant}
\renewcommand{\geq}{\geqslant}
\newcommand{\temp}{{\mathrm{Explored}}_k}
\def\XXint#1#2#3{{\setbox0=\hbox{$#1{#2#3}{\int}$}
		\vcenter{\hbox{$#2#3$}}\kern-.5\wd0}}
\begin{document}
\title{Chemical distances for percolation of planar Gaussian free fields and critical random walk loop soups}
\author{Jian Ding$^{\small{1}}$ \and Li Li$^{\small{2}}$}
\institute{Department of Statistics, University of Pennsylvania, Philadelphia, PA 19104, USA; Email: dingjian@wharton.upenn.edu; Phone: (215) 898-8222. \and Department of Statistics, University of Chicago, Chicago, IL 60637, USA.}
\date{Received: date / Accepted: date}

\communicated{name}

\maketitle
\begin{abstract}
We initiate the study on chemical distances of percolation clusters for level sets of two-dimensional discrete Gaussian free fields as well as loop clusters generated by two-dimensional random walk loop soups. One of our results states that the chemical distance  between two macroscopic annuli away from the boundary for the random walk loop soup at the critical intensity is of dimension 1 with positive probability. Our proof method is based on an interesting combination of a theorem of Makarov, isomorphism theory and an entropic repulsion estimate for Gaussian free fields in the presence of a hard wall.
\end{abstract}

\section{Introduction}
For $N\geq 1$, let $V_N\subseteq \mathbb Z^2$ be an $N\times N$ box  centered at the origin or at $(1/2, 1/2)$ depending on the parity of $N$. We define the discrete Gaussian free field (GFF) $\{\eta_{N,v}: v\in V_N\}$ with Dirichlet boundary condition to be a mean zero Gaussian
process which takes value 0 on $\partial V_N = \{v\in  V_N: u\sim v \mbox{ for some } u\in  \mathbb Z^2\setminus V_N\}$ and has covariances given by
$$\E \eta_{N, v} \eta_{N, u} =  \frac{1}{4}G_{V_N}(u, v) \mbox{ for } u, v\in V_N\,,$$
where $G_{V_N}(u, v)$ is the Green's function for simple random walk, i.e., the expected number of visits to $v$ before reaching $\partial V_N$ for a simple random walk started at $u$.
The first goal of the present paper is to study chemical distances (i.e., graph distances) on percolation clusters for level sets of GFFs. Precisely, for any $\lambda\in \mathbb R$, we let $\mathcal H_{N, \lambda} = \{v\in V_N: \eta_{N,v} \leq \lambda\}$ be the $\lambda$-level set, i.e., the collection of all vertices with values no more than  $\lambda$. In the context of no confusion, we also denote by $\mathcal H_{N, \lambda}$ the induced subgraph on $\mathcal H_{N, \lambda}$. For $u, v\in V_N$, we let $D_{N, \lambda}(u, v)$ be the graph distance in $\mathcal H_{N, \lambda}$ between $u$ and $v$ if $u, v$ are in the same connected component of $\mathcal H_{N, \lambda}$, and let $D_{N, \lambda}(u, v) = \infty$ otherwise. For $A, B\subseteq V_N$, we denote $D_{N, \lambda}(A, B) = \min_{u\in A, v\in B} D_{N, \lambda}(u, v)$. In addition, we abuse the notation by denoting $V_x = V_{\lfloor x \rfloor }$ for $x\geq 1$, where $\lfloor x \rfloor$ is the greatest integer that is at most $x$. 
\begin{theorem}\label{main-thm-annulus}
	For any $0<\alpha<\beta < 1$ and $\chi>1/2$, there exist constants $c>0, \lambda_0>0$ such that
	$$\P(D_{N, \lambda}(\partial V_{\alpha N}, \partial V_{\beta N}) \geq N \mathrm{e}^{(\log N)^{\chi}}) \leq c^{-1} (\mathrm{e}^{-c\lambda^2} + N^{-20}), \mbox{ for all } \lambda \geq \lambda_0 \mbox{ and } N\geq 1\,.$$
\end{theorem}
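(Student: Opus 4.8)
\emph{Overview of the approach.} The plan is to construct explicitly, on an event of the asserted probability, a path in $\mathcal H_{N,\lambda}$ joining $\partial V_{\alpha N}$ to $\partial V_{\beta N}$ of length at most $N\,\mathrm e^{(\log N)^\chi}$, by a hierarchical construction over scales $n_0=N>n_1>\cdots>n_K$ with $\log n_{i+1}=\tfrac12\log n_i$, so that $K\asymp\log\log N$. For $\lambda>0$ one has $\mathcal H_{N,\lambda}\supseteq\mathcal H_{N,0}$, so the existence of \emph{some} crossing of the annular region is comparatively standard level-set percolation; the point is to produce an \emph{efficient} one. At each scale $n=n_i$ I work box by box: tile $V_{\beta N}\setminus V_{\alpha N}$ by boxes $Q$ of side $n$, condition on the restriction of $\eta$ to $Q^{c}$ (by the domain Markov property the field in $Q$ is then the harmonic extension of the boundary data plus an independent GFF in $Q$), and produce inside $Q$ left--right and top--bottom crossings of $\mathcal H_{N,\lambda}\cap Q$ of length at most $n\,\mathrm e^{c_0\sqrt{\log n}}$. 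Concatenating crossings across adjacent boxes and across consecutive scales, and using $\sum_i\sqrt{\log n_i}=\sqrt{\log N}\sum_i 2^{-i/2}=O(\sqrt{\log N})$, yields a connecting path of length $N\,\mathrm e^{O(\sqrt{\log N})}\le N\,\mathrm e^{(\log N)^\chi}$ for every $\chi>1/2$; a union bound over the $\mathrm{poly}(N)$ boxes and $O(\log N)$ scales upgrades a per-box failure probability $\mathrm e^{-c\lambda^2}+n^{-40}$ to the claimed $c^{-1}(\mathrm e^{-c\lambda^2}+N^{-20})$.

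\emph{The single-scale estimate.} The crux is therefore: in a box $Q$ of side $n$, given not-too-large harmonic boundary data, $\mathcal H_{N,\lambda}\cap Q$ contains crossings of length $\le n\,\mathrm e^{c_0\sqrt{\log n}}$ except with probability $\mathrm e^{-c\lambda^2}+n^{-40}$. I would prove this in three steps. \emph{(i) Connectivity via isomorphism.} On the cable system, $\{\eta\ne 0\}$ decomposes into clusters of the intensity-$1/2$ random walk loop soup, for which RSW-type crossing estimates and uniqueness of the macroscopic cluster are available; transferring these to $\mathbb Z^2$ gives, with probability $1-n^{-40}$, a single macroscopic component $\mathcal C$ of $\mathcal H_{N,\lambda}\cap Q$ crossing $Q$ in all directions. \emph{(ii) Length via Makarov.} An efficient crossing inside $\mathcal C$ is obtained by following the outer boundaries of the ``obstacles'', i.e.\ the maximal $\ast$-connected sets of $\{\eta>\lambda\}$ separating the two sides; the length of the portion of such a boundary that a shortest path actually traverses is controlled by the harmonic measure, seen from $\mathcal C$, of those obstacles, and Makarov's theorem on the dimension of harmonic measure (together with the associated integrability of conformal derivatives) bounds the traversed length by $n^{1+o(1)}$; optimising the resulting large-deviation estimate against the Gaussian tails of $\eta$ produces the stretched-exponential correction $\mathrm e^{c_0\sqrt{\log n}}$. \emph{(iii) Ruling out blockage via entropic repulsion.} For the obstacles to genuinely force a long detour, $\{\eta>\lambda\}$ must contain a $\ast$-circuit around a macroscopic sub-box; conditioning the field to exceed $\lambda$ along such a long contour is a hard-wall conditioning that raises the field by order $\lambda$ over a region of size comparable to $n$, and the entropic-repulsion estimate for the GFF in the presence of a hard wall shows this costs probability $\mathrm e^{-c\lambda^2}$ --- the source of the first error term.

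\emph{Assembly and the main obstacle.} Concatenation needs some care: crossings produced in adjacent boxes must be joined within $\mathcal H_{N,\lambda}$, which I handle by also demanding (again via the loop-soup/RSW input of step (i)) crossings of the ``bridging'' sub-boxes straddling two tiles, so that each shared boundary segment is linked through; the length and failure-probability bookkeeping is then routine given the single-scale estimate, and at the finest scale $n_K\asymp 1$ one connects by hand. The genuine difficulty is the interaction of the three ingredients: Makarov's theorem is a deterministic statement about a fixed (multiply connected) domain, but the relevant domain --- the ``free'' region $Q$ minus its obstacles --- is itself random and correlated with the very field whose crossing we are building, so step (ii) must be carried out for the field conditioned on a coarse description of the obstacle clusters, and one must verify that this conditioned field remains close enough to a genuine GFF for both the Makarov bounds of (ii) and the entropic-repulsion estimate of (iii) to apply, uniformly over the exponentially many possible obstacle configurations. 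Propagating these errors through the $O(\log N)$ scales without letting the length overhead exceed $\mathrm e^{o(\log N)}$ --- in particular forbidding any genuinely polynomial loss, which is exactly what makes Makarov's dimension-$1$ input indispensable --- is the main technical burden of the proof.
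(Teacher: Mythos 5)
Your proposal takes a genuinely different and substantially more ambitious route than the paper, and it has a gap that is fatal as stated.

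The paper's argument is a proof by contradiction that never constructs an efficient path. It runs an exploration process outward from $V_{\alpha N}$, recording vertices at chemical distance $i$ in sets $\mathcal A_i$; a pigeonhole argument on volume shows that if $D_{N,\lambda}(\partial V_{\alpha N},\partial V_{\beta N})\ge N\mathrm e^{(\log N)^\chi}$, some level $i_0$ has $|\mathcal A_{i_0}|\le N\mathrm e^{-(\log N)^\chi}$. At that level the exploration boundary consists of $\lambda$-closed vertices plus the small set $\mathcal A_{i_0}$. Makarov's theorem (in Lemma~\ref{lem-Makarov}) controls the harmonic measure of $\mathcal A_{i_0}$ as seen from $\partial V_{(1+\beta)N/2}$, so by the Markov field property the conditional mean of the averaged observable $X$ in \eqref{eq-define-X} is pushed up by roughly $c_3\lambda$, while its conditional variance is still bounded; this contradicts the unconditional Gaussian tail of $X$ once $\lambda$ is large. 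Makarov's theorem is used to control harmonic measure of a small vertex set, not to bound a path length directly, and entropic repulsion plays no role in this theorem at all (it only enters in Section~\ref{sec-random-walk-loop-soup} for the loop soup).

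The central gap in your construction is the probability bookkeeping at the assembly step. You need every one of $\mathrm{poly}(N)$ boxes on the path (across $O(\log\log N)$ scales) to succeed, and you estimate a per-box failure probability of $\mathrm e^{-c\lambda^2}+n^{-40}$. A union bound then gives a total failure probability of order $\mathrm{poly}(N)\cdot(\mathrm e^{-c\lambda^2}+n^{-40})$, which for any fixed $\lambda$ is $\gg 1$ once $N$ is large; there is no mechanism in your argument by which the $\mathrm{poly}(N)$ independent-looking $\mathrm e^{-c\lambda^2}$ terms collapse to a single $\mathrm e^{-c\lambda^2}$. This is exactly what the paper's non-constructive argument avoids: the $\mathrm e^{-c\lambda^2}$ factor appears only once, as the Gaussian tail of the single global observable $X$. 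Relatedly, your step (i) overclaims: there is no RSW-type estimate giving crossings of $\mathcal H_{N,\lambda}$ with probability $1-n^{-40}$; the best available one-arm/crossing estimate of this flavor (Proposition~\ref{prop-warm-up}) gives only $1-O(\mathrm e^{-c\lambda^2})$, and indeed Remark~\ref{rem-main} indicates that even the event $D_{N,\lambda}<\infty$ is expected to fail with positive probability for fixed $\lambda$. Finally, step (ii) is not operational as written: Makarov's theorem bounds the harmonic measure of a set, and you would still need a concrete argument converting that into a bound on the length of a path; the paper's proof sidesteps this entirely by turning harmonic-measure control into control of $\E(X\mid\mathcal F_{I_k})$.
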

\begin{remark}\label{rem-main}
	Note that even for any fixed $\lambda<0$, the event $D_{N, \lambda}(\partial V_{\alpha N}, \partial V_{\beta N}) \leq N \mathrm{e}^{(\log N)^{\chi}}$ in Theorem~\ref{main-thm-annulus} occurs with non-vanishing probability; see Corollary~\ref{cor-main-thm}. In addition, we expect that for any fixed $\lambda$, the probability for $D_{N, \lambda}(\partial V_{\alpha N}, \partial V_{\beta N}) <\infty$ is strictly less than 1; we do not study this in the present  paper so as not to dilute the focus.  \end{remark}

We next consider the random walk loop soup introduced in \cite{LT07}, which is a discrete analogue of the Brownian loop soup \cite{LW04}. For convenience, we follow \cite{LeJan11} where the loops are endowed with a continuous-time parametrization. Formally, let $(X_t)$ be a continuous-time sub-Markovian jump process on $V_N$ which is killed at the boundary $\partial V_N$. Given two neighboring vertices $x$ and $y$, let the transition rate from $x$ to $y$ be 1. Let $(\mathbb P^t_{x, y}(\cdot))_{x, y\in V_N, t>0}$ be the bridge probability measures of $X$ conditioned on not killed until time $t$, and let $(p_t(x, y))_{x, y\in V_N, t\geq 0}$ be the transition probabilities of $X$. Then the measure $\mu$ on time-parametrized loops associated to $X$ is, as defined in \cite{LeJan11},
\begin{equation}\label{eq-def-loop-measure}
	\mu(\cdot) = \sum_{x\in V_N} \int_0^\infty \mathbb P_{x, x}^t(\cdot) \frac{p_{t}(x, x)}{t} dt\,.
\end{equation}
For $\alpha>0$, the random walk loop soup with intensity $\alpha$ on $V_N$, denoted as $\mathcal L_{\alpha, N}$, is defined to be the Poisson point process on the space of loops with intensity $\alpha \mu$.   Naturally $\mathcal L_{\alpha, N}$ induces a subgraph (which we also denote as $\mathcal L_{\alpha, N}$) of $\mathcal G_N$ where an edge is open if it is contained in (at least) one loop in $\mathcal L_{\alpha, N}$. We are particularly interested in the critical random walk loop soup, i.e., when $\alpha = \alpha_c = 1/2$. 
We denote by $D_{\mathcal L_{1/2, N}}(\cdot, \cdot)$ the chemical distance (i.e., graph distance) on the induced subgraph $\mathcal L_{1/2, N}$ (as above, we use the convention that $D_{\mathcal L_{1/2, N}}(u, v) = \infty$ if $u$ and $v$ are disconnected in $\mathcal L_{1/2, N}$).
\begin{theorem}\label{thm-random-walk-loop-soup-critical}
	For any $0<\alpha<\beta < 1$ and $\chi>1/2$, there exists a constant $c>0$ such that for all $N$
	$$\P(D_{\mathcal L_{1/2, N}}(\partial V_{\alpha N}, \partial V_{\beta N}) \leq N \mathrm{e}^{(\log N)^{\chi}}) \geq c\,.$$
\end{theorem}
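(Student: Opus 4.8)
The plan is to transfer the upper bound on chemical distances from the GFF level-set setting (Theorem \ref{main-thm-annulus}) to the critical random walk loop soup via the isomorphism theorem relating the occupation field of the loop soup at intensity $\alpha_c = 1/2$ to the square of the GFF. Concretely, Le Jan's isomorphism (in the form used in \cite{LeJan11}, see also work of Lupu) identifies the occupation field $\{\hat{\mathcal L}_{1/2,N,v}\}$ of $\mathcal L_{1/2,N}$ in distribution with $\{\tfrac12 \eta_{N,v}^2\}$; moreover, Lupu's refinement couples the \emph{sign clusters} of the GFF with the loop clusters, so that two vertices lie in the same loop cluster whenever they are connected through a region where $|\eta_{N,\cdot}|$ stays bounded away from $0$ and the sign does not change. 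The key point I would extract is: on the event that there is a connected subset of $\{v : \eta_{N,v} \le \lambda\}$ (a single sign, namely negative, and $|\eta|$ bounded below on most of it) joining $\partial V_{\alpha N}$ to $\partial V_{\beta N}$ with graph length at most $N\mathrm{e}^{(\log N)^\chi}$, the corresponding vertices are connected in the loop cluster, and the loop-cluster graph distance is comparable (up to a constant or polylog factor absorbed by enlarging $\chi$ slightly, or by a preliminary reduction to a slightly smaller $\lambda$).

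First I would fix $\alpha < \alpha' < \beta' < \beta$ and apply Theorem \ref{main-thm-annulus} with the annuli $\partial V_{\alpha' N}, \partial V_{\beta' N}$ and some fixed $\lambda = -\lambda_1 < 0$ (using Corollary \ref{cor-main-thm}, referenced in Remark \ref{rem-main}, which asserts that for fixed negative $\lambda$ the short-distance event still has probability bounded below): there is $c_1 > 0$ so that with probability at least $c_1$ there is a path $\gamma$ in $\mathcal H_{N,-\lambda_1}$ from $\partial V_{\alpha' N}$ to $\partial V_{\beta' N}$ of length at most $N\mathrm{e}^{(\log N)^{\chi}}$. On this path every vertex has $\eta_{N,v} \le -\lambda_1 < 0$, so the path lies entirely in the negative sign cluster and, crucially, stays uniformly bounded away from zero; hence under Lupu's coupling the edges along $\gamma$ (after the standard bijection between GFF sign clusters and loop-soup clusters on the cable graph, then restricting back to $\mathbb Z^2$) are realized inside a single loop cluster. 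The second step is to upgrade ``connected in the loop cluster'' to ``connected by a short loop-cluster path'': since each loop in the soup has finite (typically $O(1)$ in the relevant scale, or at worst controllable) length and since $\gamma$ already gives a connected corridor of width $\ge 1$ in $\mathcal H$, one covers $\gamma$ by $O(|\gamma|)$ loops and concatenates them, so that $D_{\mathcal L_{1/2,N}}(\partial V_{\alpha N}, \partial V_{\beta N}) \le C |\gamma| \le C N \mathrm{e}^{(\log N)^\chi}$; choosing $\chi' \in (1/2, \chi)$ at the outset and running Theorem \ref{main-thm-annulus} with $\chi'$ absorbs the constant $C$ into $\mathrm{e}^{(\log N)^\chi}$. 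Finally I would handle the reduction from $\partial V_{\alpha' N}, \partial V_{\beta' N}$ to $\partial V_{\alpha N}, \partial V_{\beta N}$: a path from the inner-to-inner annuli trivially gives one between the outer annuli since $V_{\alpha N} \subseteq V_{\alpha' N}$ forces $\partial V_{\alpha N}$ on the ``inside'' — actually here one must be slightly careful and instead arrange the annuli so that $\gamma$ necessarily crosses both $\partial V_{\alpha N}$ and $\partial V_{\beta N}$; this is a routine topological/crossing argument using that $\gamma$ connects the two boundary curves and any curve doing so must intersect every separating circle in between.

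The main obstacle I anticipate is the second step — controlling the loop-cluster chemical distance rather than mere connectivity. The isomorphism theorem is a statement about the occupation field and about connectivity of clusters, but it does not a priori say that the \emph{graph distance} inside a loop cluster is comparable to the graph distance inside the corresponding GFF sign cluster. To overcome this one needs quantitative input: either (i) a deterministic bound that a loop cluster covering a fixed finite connected region $R$ has internal diameter $O(|R|)$, which holds because each edge of $R$ lies in some loop, each loop has a parametrized length that is a.s. finite, and one can route through loops greedily while charging length to distinct edges; or (ii) an observation that since our path $\gamma$ stays a uniform distance from the zero set of $\eta$, the relevant loops are ``large'' (carry substantial occupation time) and there are not too many of them crossing any tube around $\gamma$, so their total length is $O(|\gamma|)$ with high conditional probability. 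I would pursue route (i) as the cleaner option, proving a lemma of the form: if edges $e_1, \dots, e_m$ form a connected subgraph and each $e_i$ is covered by the loop soup, then any two endpoints are joined in $\mathcal L_{1/2,N}$ by a path of length at most $\sum_i (\text{length of a loop through } e_i) = O(m)$ in expectation, with the requisite concentration coming from independence of disjoint loops. Combining this lemma with the coupling and Theorem \ref{main-thm-annulus} then closes the argument, with all constants and the passage $\chi' \to \chi$ as indicated.
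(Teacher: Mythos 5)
There is a genuine gap, and it is not the one you flag as your main worry. Your plan hinges on the claim that if a discrete path $\gamma$ has $\eta_{N,v}\le -\lambda_1<0$ for every $v\in\gamma$, then ``under Lupu's coupling the edges along $\gamma$ \ldots are realized inside a single loop cluster.'' This is false for the discrete random walk loop soup $\mathcal L_{1/2,N}$. Lupu's coupling identifies the sign clusters of the GFF on the \emph{metric graph} $\tilde{\mathcal G}_N$ with the clusters of the \emph{continuous} loop soup on $\tilde{\mathcal G}_N$; the discrete random walk loop soup is strictly dominated by this, because the continuous loop soup contains many small loops living inside single cable-graph edges that the discrete soup does not see. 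Indeed, the paper states this explicitly in Section~\ref{sec:proof-ingredients}: the coupling in \cite{Lupu14} ``only allows us to prove a version of Theorem~\ref{thm-random-walk-loop-soup-critical} for Lupu's loop soup on the metric graph,'' which is exactly the content of Theorem~\ref{thm-loop-soup-critical}, not of Theorem~\ref{thm-random-walk-loop-soup-critical}. So the passage from a GFF level-set path to a discrete loop cluster cannot be made via Lupu's coupling alone.

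Even if you only wanted the continuous (Lupu) loop soup version, your plan has a second problem: having $\eta_{N,v}\le -\lambda_1$ at the \emph{lattice} vertices of $\gamma$ does not force the metric-graph field to stay negative along the interiors of the edges of $\gamma$. Each edge independently carries a Brownian bridge, and the probability that it stays negative given negative endpoints is bounded away from $1$; with $|\gamma|\approx N\mathrm{e}^{(\log N)^\chi}$ edges, the probability that all bridges stay negative is exponentially small in $N$. This is why the paper's Proposition~\ref{prop-continuous-loop-soup} explores the sign cluster adaptively on the metric graph rather than first fixing a discrete path and then hoping the cable field cooperates. Your proposed ``route (i)'' for upgrading connectivity to short chemical distance is also unproven as stated and, in the paper's scheme, entirely avoidable.

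The paper's actual route is substantially different. It uses the random current representation of \cite{Werner15,LW15,Lawler16} conditional on the occupation field, not Lupu's sign coupling, and then splits each edge into two parallel conductors $e(1),e(2)$ with conductances $1/8,7/8$. Lemma~\ref{coupling} shows, via an explicit edge-by-edge Bernoulli comparison between the Brownian-bridge crossing probability on $e(1)$ and $1-1/\cosh\beta_e$ from the random current model, that the graph $\mathcal O$ of edges on which $|\tilde\eta'_{N,\cdot}|>\lambda$ along $e(1)$ is stochastically dominated by the edge graph of $\mathcal L_{1/2,N}$. This gives direct control of the loop-soup graph distance by the graph distance in $\mathcal O$ with no ``covering by loops'' step. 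The short-distance estimate for $\mathcal O$ (Proposition~\ref{prop-key-statement}) then needs an entropic-repulsion argument (Lemmas~\ref{lem-control-X-3}--\ref{X3-conditional-variance}), using FKG, Brascamp--Lieb, and an auxiliary harmonic function, to control the conditional mean of the observable near the explored boundary --- an ingredient with no analogue in your sketch. In short, your proposal conflates the discrete and continuous loop soups and underestimates what is needed; the paper replaces both of your problematic steps with the random current comparison and the split-edge construction.
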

\begin{remark}
	We expect that the probability for $D_{ {\mathcal L}_{1/2, N}}(\partial V_{\alpha N}, \partial V_{\beta N}) < \infty$ is strictly less than 1; see Remark~\ref{rem-main}. 
\end{remark}

\subsection{Backgrounds and related works}

Chemical distances for percolation models is a substantially more challenging problem than the question on connectivities. For instance, it is a major challenge to compute the exponent on the chemical distance between (say) the left and right boundaries for the critical planar percolation, conditioned on the existence of an open crossing. It was proved in \cite{AB99} that the dimension is strictly larger than 1, and it was shown in recent works \cite{DHS15,DHS17} that the chemical distance is substantially smaller than the length of the lowest open crossing --- indeed it was shown in \cite{DHS17} that the exponent for the chemical distance is strictly less than that of the lowest open crossing.

Due to the strong correlation and hierarchical nature of the two-dimensional GFF as well as the random walk loop soup, our models are perhaps in spirit more closely related to the fractal percolation process (see \cite{Chayes95} for a survey). For fractal percolation process, it was proved \cite{Chayes96,Orz98} that the dimension of the chemical distance is strictly larger than 1 (which suggests an interesting dichotomy in view of our dimension 1 results for the GFF and the random walk loop soup).

As for loop soups, in two-dimensions the connectivity of the loop clusters has been studied recently. In \cite{SW12}, it was shown that there is a phase transition around the critical intensity $\alpha_c = \frac{1}{2}$ for percolation of the Brownian loop soup, below which there are only bounded clusters and above which the loops forms a single cluster. In recent works of \cite{Lupu14,Lupu14b}, analogous results were proved for the random walk loop soup.

In three-dimensions or higher, there has been an intensive study on percolation of level sets for GFFs, random walks,  random interlacements as well as random walk loop soups; see, e.g., \cite{SS09,Sznitman10,RS13,CS16}. In fact, much on the chemical distances for these percolation models has been studied; see \cite{CP12,DRS14,Chang15}.
We remark that there is a drastic difference between two-dimensions and higher dimensions. 

Besides chemical distances, other metric aspects of two-dimensional GFF has been studied recently: see \cite{LW16} on the random pseudo-metric defined via the zero-set, and see   \cite{DZ15,DG15,DD16,DG16} for some progress on the first passage percolation on the exponential of these underlying fields.

Finally, the random walk loop soup percolation is naturally related to the following percolation dimension question for planar random walks (Brownian motion) proposed in \cite{DLLL93,Burdzy}. Run the random walk until it exits the boundary of a box and declare a vertex to be open if it is visited and closed otherwise.  Then what is the dimension of the minimal open crossing from the origin to the boundary? We are currently not able to prove anything for this question, for the crucial reason that we are not able to construct a coupling between GFFs and random walks under which events on GFFs will certify ``small'' chemical distances for random walk percolation models.

\subsection{Discussions on main proof ingredients}\label{sec:proof-ingredients}

Our proofs of Theorems~\ref{main-thm-annulus} and \ref{thm-random-walk-loop-soup-critical}  are based on an interesting combination of a theorem of Makarov, isomorphism theory and an entropic repulsion estimate for GFF in the presence of hard wall. In this subsection, we will provide a brief review on these three ingredients.

\medskip

\noindent{\bf A theorem of Makarov.} A fundamental ingredient for our proofs, is a classical theorem of Makarov \cite{Makarov85}  which states that the dimension of the support for the harmonic measure on simply connected domain in $\mathbb R^2$ is 1. In this article, we will use the following discrete analogue of Makarov's theorem which was proved in \cite{Lawler93} by approximating Brownian motions with random walks (and then using \cite{Makarov85}). For $u, v\in \mathbb Z^2$ and $A\subset \mathbb Z^2$, we use $\mathrm{Hm}(v, u; A)$ to denote the harmonic measure at $u$ with respect to starting point $v$ and the target set $A$ (i.e., $\mathrm{Hm}(v, u; A) = \P_v( S_{\tau_A} = u)$, where $(S_n)$ is a simple random walk on $\mathbb Z^2$ and $\tau_A$ is the first time it hits set $A$). In addition, we denote $\mathrm{Hm}(\infty, u; A) = \lim_{|v|_1\to \infty} \mathrm{Hm}(v, u; A)$ (the existence of the limit is well-known; c.f. \cite{Spitzer76}), where we denoted by $|\cdot|_1$ the $\ell_1$-norm. We further denote by $\mathrm{Hm}(\infty, B; A) = \sum_{u\in B} \mathrm{Hm}(\infty, u; A)$. 
\begin{theorem}\cite[Proposition 4.1]{Lawler93}\label{thm-Makarov}
For any $\chi>1/2$ and $\chi'<\infty$, there exists a positive constant $C$ depending only on $(\chi, \chi')$ such that for all $n\geq 1$ and any connected subset $A\subseteq \mathbb Z^2$ of diameter  $n$ (measured in $\ell_1$-distance) we have
$$\mathrm{Hm}(\infty, \{x\in A: \mathrm{Hm}(\infty, x; A) \geq n^{-1} \mathrm{e}^{(\log n)^{\chi}}\}; A) \leq C (\log n)^{-\chi'}\,.$$
\end{theorem}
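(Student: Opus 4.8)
\smallskip
\noindent\emph{Proof idea.} (The statement is due to Lawler \cite{Lawler93}; here is the route I would take.) The plan is to transfer the problem to continuous harmonic measure and then invoke Makarov's theorem in quantitative form. First I would note that $\mathrm{Hm}(\infty,\cdot\,;A)$ charges only the boundary of the unbounded component of $\mathbb{Z}^2\setminus A$, so I may replace $A$ by its outer filling; let $\widehat A\subseteq\mathbb{R}^2$ be the union of the closed unit squares centered at the points of this (connected) set --- a compact connected set with connected complement. Rescaling by $1/n$, I may take $\widehat A$ of diameter of order $1$; set $r=1/n$, let $\Omega=\mathbb{C}\setminus\widehat A$ be the resulting simply connected domain with $\infty\in\Omega$, and let $\omega=\omega(\infty,\cdot\,;\Omega)$. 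For bounded $n$ the inequality is vacuous (its right side exceeds $1$), so I take $n$ large.

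\smallskip
The first real step is a \emph{multiplicative} comparison at the lattice scale, $\mathrm{Hm}(\infty,x;A)\le C\,\omega\big(B(x,Cr)\big)$ for all $x$, with $C$ absolute. I would couple a simple random walk started far from $\widehat A$ with a Brownian motion, run until both leave a large ball around $\widehat A$, by a Skorokhod (or KMT) embedding, so that the two paths stay within $o(r)$ of one another (in rescaled units) off an event of probability $\ll r$; a Beurling-type modulus-of-continuity bound for $\omega$ near $\partial\Omega$ then converts ``the walk first hits $A$ at $x$'' into ``the Brownian motion first hits $\partial\Omega$ inside $B(x,Cr)$.'' Given this, and since the balls $\{B(x,Cr)\}$ have bounded overlap, the sum in Theorem~\ref{thm-Makarov} is at most a constant times $\omega\big(\{z\in\partial\Omega:\ \omega(B(z,Cr))\ge r\,\mathrm{e}^{(\log(1/r))^{\tilde\chi}}\}\big)$ for a fixed $\tilde\chi\in(1/2,\chi)$ (the constant $C$ absorbed into the exponent once $n$ is large), so it would remain to bound this continuous quantity by $C(\log(1/r))^{-\chi'}$.

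\smallskip
That bound is the quantitative content of Makarov's theorem, which I would extract via the conformal map $\phi$ of $\{|w|>1\}\cup\{\infty\}$ onto $\Omega$ with $\phi(\infty)=\infty$, under which $\omega$ is the pushforward of normalized length on $\{|w|=1\}$. By the Koebe distortion theorem, $\phi^{-1}(B(z,r)\cap\partial\Omega)$ is a union of arcs, each of angular width $\asymp r/|\phi'(\widehat s\,\widehat w)|$ about some $\widehat w$ on the circle with $\widehat s-1$ comparable to that width, so $\omega(B(z,r))$ is large precisely where $|\phi'|$ is small. Since $g:=-\log\phi'$ is analytic and Bloch on $\{|w|>1\}$ with absolutely bounded Bloch norm, the $L^{2k}$/martingale estimates behind Makarov's law of the iterated logarithm give, after centering $g$ by an $O(1)$ constant,
$$\mathrm{Leb}\big(\{\,w:|w|=1,\ \mathrm{Re}\,g(sw)\ge t\,\}\big)\ \le\ C\exp\!\big(-c\,t^{2}/\log\tfrac{1}{s-1}\big),\qquad t\ge 1,\ 1<s<2.$$
Taking $t\asymp(\log(1/r))^{\tilde\chi}$ and using that the relevant $\widehat s$ satisfy $\log\tfrac{1}{\widehat s-1}\asymp\log(1/r)$, the right side is $C\exp\!\big(-c(\log(1/r))^{2\tilde\chi-1}\big)\le C_{\chi'}(\log(1/r))^{-\chi'-1}$ for every $\chi'<\infty$, since $2\tilde\chi-1>0$. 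A dyadic sum over the $O(\log(1/r))$ scales of $\widehat s-1$ costs one more factor $\log(1/r)$, absorbed by the spare power, and $\log(1/r)=\log n$ finishes it.

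\smallskip
The hard part will be the multiplicative lattice-scale comparison: it must be uniform over all lattice polygons $\widehat A$, whose boundaries are rough at every scale, and harmonic-measure estimates degrade near $\partial\Omega$, so the coupling error has to be pushed well below $r$ and the Beurling bound used carefully. A secondary difficulty is making Makarov's almost-everywhere statement effective --- producing the explicit exceptional-set tail $\exp(-c(\log n)^{2\chi-1})$ and unioning over all dyadic scales between $r$ and $1$ with only a logarithmic loss --- and this is exactly where the hypotheses $\chi>1/2$ and ``$\chi'$ arbitrary'' are used.
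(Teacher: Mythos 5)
The paper does not prove Theorem~\ref{thm-Makarov}; it is imported verbatim from \cite[Proposition~4.1]{Lawler93} and used as a black box, so there is no in-paper proof to compare against. For what it is worth, your sketch does trace the route Lawler actually takes: couple simple random walk with Brownian motion to transfer the discrete harmonic measure to the continuous harmonic measure on the exterior domain, then invoke the quantitative form of Makarov's theorem as a subgaussian tail estimate for the Bloch function $-\log\phi'$, whose Bloch seminorm is universally bounded for univalent $\phi$. Your bookkeeping with $2\tilde\chi-1>0$ absorbing the dyadic sum over scales is correct, and you rightly single out the uniform lattice-to-continuum comparison $\mathrm{Hm}(\infty,x;A)\le C\,\omega(B(x,Cr))$ as the genuinely hard step: essentially all of the technical work in \cite{Lawler93} is concentrated there, via a strong-approximation coupling that remains accurate down to the lattice scale together with a Beurling-type modulus of continuity for $\omega$ near the rough boundary. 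The one small gap in the outline is the outer-filling reduction, which you assert rather than justify: you should record that $\mathrm{Hm}(\infty,\cdot\,;A)$ is supported on the portion of $A$ reachable from infinity in $\mathbb Z^2\setminus A$, so that passing from $A$ to its outer filling changes neither side of the inequality, and that the resulting $\widehat A$ has connected complement in the sphere so the Riemann map from $\{|w|>1\}$ onto $\Omega$ actually exists.
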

\begin{remark}
The assumption of $\chi>1/2$ in Theorem~\ref{thm-Makarov} is responsible for the same condition on $\chi$ in Theorems~\ref{main-thm-annulus} and \ref{thm-random-walk-loop-soup-critical}.
\end{remark}
Previous to \cite{Makarov85}, the Beurling's projection theorem (see, e.g., \cite[Theorem V.4.1]{Bass95}, and see \cite{Kesten87,LL04} for its discrete analogue) was established, which gives an (achievable) upper bound on the maximal local expansion of the harmonic measure compared with 1-dimensional Hausdorff measure (in the language of simple random walk, it states that the harmonic measure at a lattice point on a simply connected set of diameter $n$ is bounded by $O(1/\sqrt{n})$). In a sense, Makarov's theorem states that the upper bound in Berling's estimate cannot be achieved globally, and thus providing a much better control (than that guaranteed by Beurling's projection theorem) on the global expansion and compression of harmonic measure. Finally, we remark that examples have been given in \cite{MP73,Carleson73}, in which the harmonic measure is \emph{singular} to the 1-dimensional Hausdorff measure. In our opinion, this suggests that Question~\ref{question-super-linear} below could be of serious challenge.

\medskip

\noindent{\bf Isomorphism theory.} The distribution of the occupation times for random walks can be fully characterized by Gaussian free fields;
results of this flavor go by the name of isomorphism theorems (see \cite{MR06,LeJan11,Sznitman12,Rosen14} for an excellent account on this topic). Of significance to the present article is the following version of isomorphism theorem between occupation times for random walk loop soups and Gaussian free fields shown in \cite{LeJan11}.

Recall the definition of random walk loop soups $\mathcal L_{\alpha, N}$. We define the associated occupation time field $(\hat {\mathcal L}^x_\alpha)_{x\in V_N}$ by 
$$\hat {\mathcal L}^x_\alpha = \sum_{\gamma \in \mathcal L_{\alpha, N}} \int_0^{T(\gamma)} \mathbf 1_{\gamma(t)=x} dt$$
where $T(\gamma)$ is the duration of the loop $\gamma$. The isomorphism theorem in \cite{LeJan11} states that 
\begin{equation}\label{eq-isomorphism}
	\{{\hat {\mathcal L}^x_{1/2}}: x\in V_N\} \stackrel{law}{=} \{\frac{1}{2} \eta_{N, x}^2: x\in V_N\}
\end{equation}
(note that this holds for loop soups on general graphs). Couplings between random walks/random walk loop soups and Gaussian free fields have been developed recently in \cite{Lupu14}, where the signs of GFFs are incorporated in the coupling in order to provide certificate for vertices/edges not visited by random walks/random walk loop soups. The paper \cite{Lupu14} was motivated by connectivity of the loop soup clusters as well as random interlacement. Independent of \cite{Lupu14}, such coupling was established for random walks in \cite{Zhai14} with the application of deriving an exponential concentration for cover times. The work \cite{Zhai14} was motivated by \cite{Ding14}, where such coupling was proved for general trees and questioned for general graphs; the advance in \cite{Lupu14} was independent of \cite{Ding14}.

In fact, using the coupling derived in \cite{Lupu14} only allows us to prove a version of Theorem~\ref{thm-random-walk-loop-soup-critical} for Lupu's loop soup on the metric graph introduced in \cite{Lupu14}; see Section~\ref{sec-continuous-loop} and in particular Theorem \ref{thm-loop-soup-critical}. In order to deal with the random walk loop soup, we will use a more recent result on the \emph{random current model} for random walk loop soups. 
A random current model on a graph, say ${\mathcal G}_N=(V_N, E_N)$ in our case, is the probability measure $\P$ with 
\begin{equation}\label{eq-def-random-current}
	\P((n_e)_{e\in E_N})\propto \prod_{e\in E_N}\frac {(\beta_e)^{n_e}}{n_e!}\,,
\end{equation}
where $(n_e)_{e\in E_N}$ are nonnegative integers such that $\sum_{e:v\in e} n_e \text { is even for any } v\in V_N$, and $(\beta_e)_{e\in E_N}$ are positive parameters on $E_N$.  Conditioned on $\{\hat{\mathcal L}^v_{1/2} = \ell_v\}_{v\in V_N}$, let $(n_e)_{e\in E_N}$ be a random current model with parameters $\beta_e=2\sqrt{\ell_x\ell_y}$ on edge $e=(x,y)$. 

It was shown in \cite{Werner15,LW15,Lawler16}  (see \cite[Theorem 4 and Proposition 6.7]{Lawler16} for a formal statement) that conditioned on the local times the distribution of $(n_e)_{e\in E_N}$  is the same as that of the number of jumps of the random walk loop soup $\mathcal L_{1/2, N}$ along each $e\in E_N$, and therefore $(1_{n_e>0})_{e\in E_N}$ has the same distribution as the graph induced by $\mathcal L_{1/2, N}$ on $V_N$.

We remark that the random current representation played a crucial role in a recent work \cite{ADS15} which proved the continuity of spontaneous magnetization for the three-dimensional Ising model at the critical temperature. Finally, we remark that the random Eulerian graph model considered in \cite{Ding14} (which was used to reconstruct the number of visits to vertices from the continuous occupation times) was of high resemblance of  the random current model.

\medskip

\noindent {\bf Entropic repulsions.} Unlike the Lupu's loop soup, the clusters for the critical random walk loop soup is strictly dominated by the sign clusters of the GFF on the metric graph. In order to address this, we apply the aforementioned random current model and see that the loop clusters dominates a generalized sign cluster on the metric graph, where we replace each original edge (which can be viewed as a unit resistor) by two edges and assign the conductances so that it sums to 1. This is summarized in Lemma~\ref{coupling}. When employing the proof idea of Theorem~\ref{thm-loop-soup-critical}, we encounter a problem which amounts to bounding the typical value of a GFF under the conditioning of staying positive in a subset. Results of this type, on such entropic repulsions for two-dimensional GFFs under the presence of hard wall, has been obtained in \cite{Dunlop1992,BDG01}. Our set up is slightly more complicated (and somewhat non-standard), and dealing with it forms the main technical ingredient in Section~\ref{sec-random-walk-loop-soup}. As standard in this type of problems, our proof crucially relies on the FKG inequality \cite{FKG71,Preston74} and the Brascamp-Lieb inequality \cite{BL76}.

\subsection{Open problems}

Our results motivate a number of interesting questions, as we list below.

\begin{question}
	For the random walk loop soup in the supercritical regime (i.e., with intensity strictly larger than $\frac{1}{2}$), is the dimension of the chemical distance 1 with high probability?
\end{question}

\begin{question}
	Can one prove an analogous result for Brownian loop soups?
\end{question}

Next, we will ask a number of questions in the context of level set percolation for GFF, but one can ask natural analogous questions for loop soups as well as random walks. We feel that, perhaps the questions regarding to GFF may be answered before that on random walks and loop soups.

\begin{question}
	Under assumptions of Theorem~\ref{main-thm-annulus}, is the dimension of chemical distance 1 with high probability conditioned on the existence of an open crossing?
\end{question}

\begin{question}\label{question-super-linear}
	Under assumptions of Theorem~\ref{main-thm-annulus}, is the length of minimal open crossing $O(n)$ with positive probability?
\end{question}

\begin{question}
	Under assumptions of Theorem~\ref{main-thm-annulus}, is the number of disjoint open crossings tight?
\end{question}

Finally, we pose a question regarding to universality of Theorem~\ref{main-thm-annulus}, whose difficulty is due to the crucial role of Makarov's theorem (which seems to only apply for GFF) in the proof of Theorem~\ref{main-thm-annulus}. In fact, we choose to keep an open mind on whether such universality holds, in light of a non-universality result in \cite{DZ15} on a different  type of metric related to GFF.
\begin{question}
	Does an analogous result to Theorem~\ref{main-thm-annulus} hold for all log-correlated Gaussian fields?
\end{question}

\section{Percolation for Gaussian free fields}

This section is devoted to the proof of Theorem~\ref{main-thm-annulus}.
For notation convenience, we say a vertex $v$ is $\lambda$-open (or open if no risk of confusion) if $v\in \mathcal H_{N, \lambda}$, and $\lambda$-closed (or closed) otherwise.  For any $A, B \subseteq V_N$, we denote by $A \overset{\leq \lambda}{\longleftrightarrow} B$ the event that there exists a $\lambda$-open path $P$ connecting $A$ and $B$, i.e., $D_{N, \lambda}(A, B)<\infty$.

\subsection{One-arm estimate: a warm up argument} \label{sec-warm-up}
In this subsection, we give a warm up argument on level set percolation for GFF. Despite being rather simple, the argument is a clear demonstration of the fundamental idea of the paper, which allows to take advantage of the Markov field property of GFF in studying  percolations.  We remark that a similar argument was employed in \cite[Section 3]{Sznitman15}.

\begin{prop}\label{prop-warm-up}
	For any $0<\alpha<\beta < 1$, there exists a constant $c>0$ such that for all $\lambda > 0$
	$$\P(\partial V_{\alpha N} \overset {\leq \lambda}{\longleftrightarrow} \partial V_{\beta N}) \geq 1 - 2\mathrm{e}^{-c\lambda^2}\,.$$
\end{prop}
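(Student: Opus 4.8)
The plan is to exploit, exactly as advertised, the Markov field property of $\eta_N$, together with a single Gaussian tail estimate which is what produces the factor $\mathrm e^{-c\lambda^2}$. First I would pass to the complementary event: by planar duality, $\{\partial V_{\alpha N}\overset{\leq\lambda}{\longleftrightarrow}\partial V_{\beta N}\}$ fails \emph{iff} the $\lambda$-closed vertices contain a $*$-connected circuit surrounding $V_{\alpha N}$ inside the annulus $V_{\beta N}\setminus V_{\alpha N}$, so it suffices to bound the probability of such a ``blocking circuit'' by $2\mathrm e^{-c\lambda^2}$. Next, I fix some $\beta<\beta'<1$ and condition on $\eta_N$ restricted to $V_N\setminus V_{\beta'N}$; by the domain Markov property, on $V_{\beta'N}$ we may write $\eta_N=\varphi+h$, where $\varphi$ is a GFF on $V_{\beta'N}$ with zero boundary condition (independent of the conditioning) and $h$ is harmonic with boundary data $\eta_N|_{\partial V_{\beta'N}}$. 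The annulus $V_{\beta N}\setminus V_{\alpha N}$ sits at distance $\gtrsim N$ from both $\partial V_{\beta'N}$ and $\partial V_N$, and a Green's-function computation gives $\var h(x)=\tfrac14\bigl(G_{V_N}(x,x)-G_{V_{\beta'N}}(x,x)\bigr)=O(1)$ uniformly there, with $h$ varying slowly; hence by the Borell--TIS inequality
$$\P\bigl(\max\nolimits_{V_{\beta N}\setminus V_{\alpha N}}h>\lambda/2\bigr)\leq C\mathrm e^{-c\lambda^2}.$$
On the complementary event any $\lambda$-closed blocking circuit for $\eta_N$ is a $(\lambda/2)$-closed blocking circuit for the zero-boundary field $\varphi$, so the problem is reduced to an \emph{intrinsic} estimate: for a zero-boundary GFF $\varphi$ on a box of side $\asymp N$, the probability that $\{\varphi>\lambda/2\}$ contains a circuit surrounding $V_{\alpha N}$ in the annulus is at most $C\mathrm e^{-c\lambda^2}$.

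For this intrinsic estimate I would run a multi-scale/renormalization argument, again decoupled by the Markov property. Place a bounded number $m=O(1)$ of pairwise disjoint ``radial corridors'' $\widehat Q_1,\dots,\widehat Q_m$ of width $\asymp N$ crossing the annulus, and inside each a thinner core $Q_i$; any circuit around $V_{\alpha N}$ must cross each $Q_i$ in the angular direction. Conditioning $\varphi$ on its values outside $\bigcup_i\widehat Q_i$ yields \emph{independent} zero-boundary GFFs $\psi_i$ on the $\widehat Q_i$ plus harmonic corrections that are $O(1)$ on the cores except on an event of probability $\leq C\mathrm e^{-c\lambda^2}$ (Borell--TIS again, using that the $\widehat Q_i$ are fat). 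On the good event the circuit forces $\{\psi_i>\lambda/4\}$ to cross $Q_i$ for every $i$, and these are independent events, so the blocking probability is bounded by $C\mathrm e^{-c\lambda^2}+\prod_i\P(\{\psi_i>\lambda/4\}\text{ crosses }Q_i)$. The one-corridor crossing probability is then estimated by a renormalization inequality bootstrapped from the trivial constant-scale base case: at constant scale the values of the relevant GFF along any of the $O(1)$ crossing paths of a constant-size box form a positively correlated Gaussian vector with $O(1)$ variances, whose minimum exceeds $\lambda$ only with probability $\leq\mathrm e^{-c\lambda^2}$, and a standard renormalization step upgrades this to all larger scales; feeding it back closes the bound.

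The main obstacle throughout is the long-range (logarithmic) correlation of the GFF, which rules out a naive Peierls or block-independence argument; the repeated Markov decompositions are precisely what circumvent it, by exhibiting, at every relevant scale, an independent zero-boundary piece together with a harmonic correction whose variance is $O(1)$ and hence controllable by one Gaussian tail bound. The delicate point --- and the place where the real work lies --- is to choose the sizes of the conditioning boxes large enough that these harmonic corrections genuinely have bounded variance (so that the single-scale estimate is $\mathrm e^{-c\lambda^2}$ rather than something weaker), while keeping enough of the field independent to carry the renormalization through; verifying the renormalization step for the dependent residual fields $\psi_i$ is the only nontrivial ingredient beyond the Markov property and Borell--TIS.
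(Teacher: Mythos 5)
Your proposal diverges from the paper's proof after the first step and takes a substantially heavier route; moreover the key renormalization step has a real gap.

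The paper's argument, after the planar-duality reduction you also use, is a one-shot Bayes-rule computation with a single Gaussian observable. One orders blocking contours by inclusion, observes that the minimal $\lambda$-closed contour $\mathcal C^*$ is a stopping set (i.e.\ $\{\mathcal C^*=\mathcal C\}\in\mathcal F_{\bar{\mathcal C}}$, discovered by exploring outward from $V_{\alpha N}$), and then evaluates the conditional mean of the averaged observable $X=|\partial V_{(1+\beta)N/2}|^{-1}\sum_v\eta_{N,v}$ given $\mathcal F_{\bar{\mathcal C}}$. Because the simple random walk started near $\partial V_{(1+\beta)N/2}$ hits the annulus before $\partial V_N$ with probability at least $c_3$ (a soft elliptic estimate), the conditional mean of $X$ is at least $c_3\lambda$ on the bad event, while the conditional and unconditional variances of $X$ are both $O(1)$ uniformly in $N$. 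Since $X$ is unconditionally a mean-zero Gaussian with $O(1)$ variance, Bayes' rule gives $\P(\text{bad})\le\P(X\ge c_3\lambda/2)/\P(X\ge c_3\lambda/2\mid\text{bad})\le 2\mathrm e^{-c\lambda^2}$. No renormalization, no multi-scale decomposition, no Borell--TIS.

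By contrast, your proposal reduces to an ``intrinsic'' zero-boundary crossing estimate and then attempts to prove that estimate by a corridor renormalization. The reduction step (conditioning on $\eta_N$ outside $V_{\beta' N}$, decomposing into a zero-boundary GFF $\varphi$ plus a harmonic correction $h$ of $O(1)$ variance on the inner annulus, and applying Borell--TIS to $h$) is sound. The gap is in the renormalization you invoke to bound $\P(\{\varphi>\lambda/2\}\ \text{contains a blocking circuit})$. Your sketch asserts that a ``standard renormalization step upgrades [a constant-scale bound] to all larger scales'', but this glosses over exactly the hard point: the GFF is log-correlated, so the variance of the zero-boundary field $\varphi$ (and of the residual fields $\psi_i$ on the corridors) is $\asymp\log N$ at the center, not $O(1)$. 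A naive iteration of the Markov decomposition across $\log N$ scales accumulates an $O(1)$-variance coarse correction at each step, and the cumulative coarse field has unbounded variance; controlling it at level $\lambda/4$ does not follow from a single Gaussian tail bound. To close such a renormalization one typically needs a sprinkling of the level (replacing $\lambda$ by a sequence $\lambda_k$ decreasing across scales) together with a careful bookkeeping of the decoupled-crossing probabilities, which is substantially more work than the sketch suggests and is not obviously compatible with the uniform $\mathrm e^{-c\lambda^2}$ conclusion you are after. So as written, the renormalization step is not a proof, and the route is both more complicated and less obviously correct than the paper's direct stopping-set argument.

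If you want a clean fix, note that the intrinsic zero-boundary estimate you reduce to can itself be proved by the paper's one-observable method applied to $\varphi$ directly, which would make your preliminary Borell--TIS reduction unnecessary.
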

In order to prove Proposition~\ref{prop-warm-up}, we will need  the following standard estimates on simple random walks; we include a proof merely for completeness. 
	\begin{lemma}\label{random-walk-lemma-1}
		For any fixed $0< r<1$, there exist constants $c_1, c_2>0$ which depend on $r$ such that
		\begin{equation}\label{random-walk-lemma-1-eq-1}
			\sum\limits_{v\in \partial V_{rN}} G_{V_N}(u,v)\leq c_1N, \quad \forall u\in \partial V_{rN}
		\end{equation}
		and
		\begin{equation}\label{random-walk-lemma-1-eq-2}
			G_{V_N}(u,v)\geq c_2, \quad \forall u,v\in V_{rN}\,.
		\end{equation}
		Furthermore, for any $0<\alpha< \beta<1$, there exists a constant $c_3>0$ such that for all $u\in \partial V_{\beta N}$, the simple random walk started at $u$ will hit $\partial V_{\alpha N}$ before $\partial V_N$ with probability at least $c_3$.
	\end{lemma}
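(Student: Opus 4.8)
The plan is to deduce all three claims from standard estimates for simple random walk $(S_n)$ in $\mathbb Z^2$. The tools I will use are: the potential kernel $a(z)=\tfrac2\pi\log|z|+\kappa+o(1)$, harmonic on $\mathbb Z^2\setminus\{0\}$; the representation $G_{V_N}(x,y)=\E_x\big[a(S_{\tau_{\partial V_N}}-y)\big]-a(x-y)$; the first-passage factorization $G_{V_N}(x,y)=\P_x(\tau_y<\tau_{\partial V_N})\,G_{V_N}(y,y)$; the two-dimensional gambler's-ruin estimate (optional stopping applied to $a(S_{\,\cdot\,})$, valid while the walk avoids $0$); and, for the last assertion, the invariance principle. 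No step is deep; the only one needing mild care is \eqref{random-walk-lemma-1-eq-2} when $u,v$ are far apart and both near $\partial V_N$, where a direct potential-kernel estimate is a useless difference of two large logarithms.

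For \eqref{random-walk-lemma-1-eq-1}, note that $\sum_{v\in\partial V_{rN}}G_{V_N}(u,v)$ is the expected time a walk from $u$ spends on $\partial V_{rN}$ before exiting $V_N$. The representation above together with $|S_{\tau_{\partial V_N}}-v|\le 2N$ gives the standard bound $G_{V_N}(u,v)\le C\log\!\big(\tfrac{3N}{|u-v|_1\vee1}\big)$ for all $u,v\in V_N$. Since $\partial V_{rN}$ lies on four axis-parallel segments, it meets the $\ell_1$-sphere of radius $k$ about $u$ in $O(1)$ points for each $k\le 2rN$, so by Abel summation $\sum_{v\in\partial V_{rN}}\log\!\big(\tfrac{3N}{|u-v|_1\vee1}\big)\le C\sum_{k=1}^{2rN}\log(3N/k)=O(N)$, using $\sum_{k=1}^M\log(M/k)=\log(M^M/M!)=O(M)$; this yields \eqref{random-walk-lemma-1-eq-1}. (Alternatively, purely probabilistically: the walk alternates between sojourns in $V_{rN}$ and excursions into $V_N\setminus V_{rN}$; each sojourn contributes at most a $\mathrm{Geometric}(\tfrac14)$ number of visits to $\partial V_{rN}$, since from any vertex of $\partial V_{rN}$ the walk leaves $V_{rN}$ at the next step with probability $\ge\tfrac14$, while each excursion into $V_N\setminus V_{rN}$ reaches $\partial V_N$ before returning to $V_{rN}$ with probability $\gtrsim 1/N$ by the planar escape estimate, so there are $O(N)$ sojourns in expectation.)

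For \eqref{random-walk-lemma-1-eq-2} I would use $G_{V_N}(u,v)=\P_u(\tau_v<\tau_{\partial V_N})\,G_{V_N}(v,v)$ and bound the two factors separately, uniformly over $u,v\in V_{rN}$. For the second factor, the box $v+[-m,m]^2$ with $m=\lfloor\tfrac{1-r}{3}N\rfloor$ lies inside $V_N$, so $G_{V_N}(v,v)\ge G_{v+[-m,m]^2}(v,v)$, and optional stopping for $a(S_{\,\cdot\,}-v)$ between the first step and the exit of this box shows the escape probability from $v$ is $\le C/\log m\le C/\log N$, i.e.\ $G_{V_N}(v,v)\gtrsim\log N$. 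For the first factor, factor again: with probability $\ge c(r)$ the walk from $u$ reaches a fixed ball $B\subseteq V_N$ of radius $\tfrac{1-r}{6}N$ centered at $v$, before $\tau_{\partial V_N}$ --- a macroscopic (diffusive-scale) event, so by the invariance principle it follows from the fact that planar Brownian motion in $(-\tfrac12,\tfrac12)^2$ started anywhere in $[-\tfrac r2,\tfrac r2]^2$ reaches a fixed ball of radius $\tfrac{1-r}{6}$ centered in $[-\tfrac r2,\tfrac r2]^2$ before exiting with probability bounded below (a positive, jointly continuous function on a compact set); and from within $B$ the walk hits the single vertex $v$ before leaving a concentric ball still inside $V_N$ with probability $\gtrsim 1/\log N$, again by the gambler's-ruin estimate for $a$. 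Multiplying, $\P_u(\tau_v<\tau_{\partial V_N})\gtrsim 1/\log N$, hence $G_{V_N}(u,v)\gtrsim 1$, which is \eqref{random-walk-lemma-1-eq-2}.

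For the last assertion, note that the walk hits $\partial V_{\alpha N}$ precisely when it first enters $V_{\alpha N}$, so it suffices to bound $\P_u(\tau_{V_{\alpha N}}<\tau_{\partial V_N})$ below for $u\in\partial V_{\beta N}$. This is again a diffusive-scale event, so by the invariance principle it reduces to the Brownian statement: planar Brownian motion in $(-\tfrac12,\tfrac12)^2$ started at any point of $\partial[-\tfrac\beta2,\tfrac\beta2]^2$ hits $[-\tfrac\alpha2,\tfrac\alpha2]^2$ before $\partial[-\tfrac12,\tfrac12]^2$ with probability bounded below --- which holds because this probability is a strictly positive, continuous function of the starting point (positive since the target has nonempty interior and the start lies at positive distance from $\partial[-\tfrac12,\tfrac12]^2$) on the compact set $\partial[-\tfrac\beta2,\tfrac\beta2]^2$, hence attains a positive minimum; alternatively one may just cite the standard two-dimensional annulus-crossing estimate for simple random walk. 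The step I expect to be least routine is \eqref{random-walk-lemma-1-eq-2} in the far-apart regime, where the first-passage factorization through $G_{V_N}(v,v)$ --- rather than a single potential-kernel computation --- is exactly what makes the constant come out positive.
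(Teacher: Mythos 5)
Your proposal is correct, and for inequality \eqref{random-walk-lemma-1-eq-2} it takes a genuinely different route from the paper. For \eqref{random-walk-lemma-1-eq-1} your alternative (probabilistic) argument is essentially the paper's: the paper counts excursions, observing that from $\partial V_{rN}$ the walk steps to $\partial V_{rN+2}$ with probability $\geq 1/4$ and then escapes to $\partial V_N$ before returning with probability $\gtrsim 1/((1-r)N)$, so the expected number of visits to $\partial V_{rN}$ is $O(N)$; your primary route (potential-kernel bound $G_{V_N}(u,v)\le C\log(3N/(|u-v|_1\vee1))$ plus a one-dimensional shell count and Abel summation) is a valid analytic alternative. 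For \eqref{random-walk-lemma-1-eq-2}, the paper first brings the walk into an $\ell_\infty$-ball of radius $\epsilon N=\tfrac{1-r}{100}N$ around $v$ with constant probability, exploiting the \emph{coordinate-wise independence} of the two components of the simple random walk (no invariance principle needed), and then cites \cite[Prop.\ 4.6.2, Thm.\ 4.4.4]{LL10} to conclude. You instead write $G_{V_N}(u,v)=\P_u(\tau_v<\tau_{\partial V_N})\,G_{V_N}(v,v)$, bound $G_{V_N}(v,v)\gtrsim \log N$ by domain monotonicity, and obtain $\P_u(\tau_v<\tau_{\partial V_N})\gtrsim 1/\log N$ via a macroscopic crossing (invariance principle) followed by a gambler's-ruin estimate; the two logarithms cancel. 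Your approach is more self-contained and makes the log-cancellation explicit; the paper's is shorter and has the advantage of being invariance-principle free (the elementary coordinate argument also directly yields the last statement of the lemma, which the paper notes is ``implicitly proved'' along the way, whereas you prove it separately via Brownian motion). Both are sound.
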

	
	\begin{proof}
Let $S_n=(S_{1, n}, S_{2, n})$ be a simple random walk on $\mathbb Z^2$. It is clear that if $S$ is on $\partial V_{rN}$ at some point, in the next step it will move to some vertex on $\partial V_{rN+2}$ with probability at least 1/4 (note that $\partial V_{rN}$ and $\partial V_{rN+2}$ are two neighboring boundaries), and after that, it will hit $\partial V_N$ before $\partial V_{rN}$ with probability at least $\frac 1 {(1-r)N}$ (since $\max\{|S_{1, n}|,|S_{2, n}|\}$ is a submartingale). Therefore, a simple random walk started at any $u\in \partial V_{rN}$ will in expectation visit $\partial V_{rN}$ at most $4(1-r)N$ times before hitting $\partial V_N$. This proves our first bound \eqref{random-walk-lemma-1-eq-1}.
		
		For the second bound \eqref{random-walk-lemma-1-eq-2}, let $\epsilon = \frac{1-r}{100}$. Denote by $u = (u_1, u_2)$ and $v=(v_1, v_2)$. By independence of the simple random walks in $x$ and $y$-coordinates, there exists $c' = c'(r)$ such that with probability at least $c'$ the simple random walk started at $u$ will hit some point $v^*$ in the vertical line $x = v_1$ before exiting $V_N$ or before the $y$-coordinate deviates by more than $\epsilon N$; started from $v^*$, there is again probability at least $c'$ for the simple random walk to hit the horizontal line $y=v_2$ before the horizontal coordinate deviates by more than $\epsilon N$. Altogether, there is probability at least $(c')^2$ for the random walk to hit the $\ell_\infty$-ball of radius $\epsilon N$ around $v$ before exiting $V_N$. At this point, an application of \cite[Proposition 4.6.2, Theorem 4.4.4.]{LL10} completes the verification of \eqref{random-walk-lemma-1-eq-2}.
		
		The last statement of lemma was implicitly proved in the above derivation of \eqref{random-walk-lemma-1-eq-2}.\qed 
	\end{proof}

\begin{proof}[Proof of Proposition~\ref{prop-warm-up}]
We say two vertices $u$ and $v$ are $*$-connected if the $\ell_\infty$-norm of $u-v$ is 1, and we call a $*$-connected cycle as a contour.
	By planar  duality,  the complement of the event $\{\partial V_{\alpha N} \overset {\leq \lambda}{\longleftrightarrow} \partial V_{\beta N}\}$ is the same as the event that there exists a $\lambda$-closed contour $\mathcal C \subseteq V_{\beta N}$ surrounding (each vertex of) $V_{\alpha N}$ (we say a vertex $v$ is surrounded by $\mathcal C$ if any path from $v$ to $\partial V_{\beta N}$ has to intersect with $\mathcal C$). We let $\mathfrak C$ be the collection of all such contours. It suffices to estimate $\P(\mathfrak C \neq \emptyset)$. 
	
	To this end, we consider a natural partial order on all contours. For any contour $\mathcal C$, we let $\bar {\mathcal C}$ be the collection of vertices that are surrounded by $\mathcal C$. For two contours $\mathcal C_1$ and $\mathcal C_2$, we say $\mathcal C_1 \leq \mathcal C_2$ if $\bar {\mathcal C_1} \subseteq \bar {\mathcal C_2}$. A key observation is that this partial order generates a well-defined (unique) global minimum on $\mathfrak C$, which we denote by $\mathcal C^*$. Furthermore, for any contour $\mathcal C \subseteq V_{\beta N}$ surrounding $V_{\alpha N}$, we have
	\begin{equation}\label{eq-stopping-contour}
		\{ \mathcal C^* = \mathcal C \} \in \mathcal F_{ \bar{\mathcal C}} \deq \sigma(\{\eta_{N, v}: v\in  \bar{\mathcal C}\})\,.
	\end{equation} 
	
	Define our ``observable'' $X$ to be
	\begin{equation}\label{eq-define-X}
		X = \frac{1}{|\partial V_{(1+\beta)N/2}|}\sum_{v\in \partial V_{(1+\beta)N/2}} \eta_{N, v}\,.
	\end{equation}
		As a simple corollary of \eqref{random-walk-lemma-1-eq-1}, 	there exists a constant $c_4>0$ which depends on $\beta$ such that
	\begin{equation}\label{Var-X}
		\var X=\frac{1}{4}\frac{1}{|\partial V_{(1+\beta)N/2}|^2}\sum_{u,v\in \partial V_{(1+\beta)N/2}} G_{V_N}(u,v)\leq c_4
	\end{equation}
	and thus we also have $\var (X \mid \mathcal F_{\bar{\mathcal C}}) \leq c_4$.
	
	By the Markov field property of the GFF, we have for each $v\in \partial V_{(1+\beta)N/2}$
	\begin{equation}\label{eq-conditional-expectation-given-Contour-1}
		\E (\eta_{N,v} \mid \mathcal F_{\bar{\mathcal C}}) = \sum_{u\in \mathcal C} \mathrm{Hm}(v, u; \mathcal C \cup \partial V_N) \cdot  \eta_{N, u} \,.
	\end{equation} 
	Recall that for a set $A$, we use $\mathrm{Hm}(v, u; A)$ to denote the harmonic measure at $u$ with respect to starting point $v$ and the target set $A$ (i.e., $\mathrm{Hm}(v, u; A) = \P_v( S_{\tau_A} = u)$, where $(S_n)$ is a simple random walk on $\mathbb Z^2$ and $\tau_A$ is the first time it hits set $A$). Also recall that $\mathrm{Hm}(v, B; A) = \sum_{u\in B} \mathrm{Hm}(v, u; A)$. Now on the event $\{\mathcal C^* = \mathcal C\}$ , we have $\eta_{N, u}\geq \lambda$ for all $u\in \mathcal C$. Combined with Lemma~\ref{random-walk-lemma-1}, it gives that
	\begin{equation}\label{eq-conditional-expectation-given-Contour}
		\E (\eta_{N,v} \mid \mathcal F_{\bar{\mathcal C}})  \geq \lambda \mathrm{Hm}(v, \mathcal C; \mathcal C \cup \partial V_N) \geq \lambda \mathrm{Hm}(v, \partial V_{\alpha N}; \partial V_{\alpha N} \cup \partial V_N) \geq c_3 \lambda\,.
	\end{equation}
	Therefore, we have $\E (X \mid \mathcal F_{\bar{\mathcal C}}) \geq c_3\lambda$ on the event $\{\mathcal C^* = \mathcal C\}$. Thus, 
	$$\P(X\geq c_3\lambda/2 \mid \mathcal F_{\bar{\mathcal C}})\geq 1-\P(Z(c_4)\geq c_3\lambda/2) \text { on the event } \{\mathcal C^* = \mathcal C\}\,,$$
	where we $Z(c_4)$ is a mean zero Gaussian variable with variance $c_4$.
	Since $\{ \mathcal C^* = \mathcal C \} \in \mathcal F_{ \bar{\mathcal C}}$, we have
	$$\P(X \geq c_3 \lambda /2 \mid \mathcal C^* = \mathcal C) \geq 1-\P(Z(c_4)\geq c_3\lambda/2)\,.$$
	Summing this over all possible contours $\mathcal C \subseteq V_{\beta N}$ surrounding $V_{\alpha N}$, we obtain that
	$$\P(X \geq c_3 \lambda/2 \mid \mathfrak C \neq \emptyset) \geq 1-\P(Z(c_4)\geq c_3\lambda/2)\,.$$
	Combined with the simple fact that 
	\begin{equation}\label{eq-X-tail}
		\P(X \geq c_3\lambda/2) \leq \P(Z(c_4)\geq c_3\lambda/2)\,,
	\end{equation}
	it follows that
	$$\P(\mathfrak C \neq \emptyset) = \frac{\P(X \geq c_3\lambda/2)}{\P(X \geq c_3\lambda/2 \mid \mathfrak C \neq \emptyset) } \leq \frac {\P(Z(c_4)\geq c_3\lambda/2)}{1-\P(Z(c_4)\geq c_3\lambda/2)}\,.$$
	This completes the proof of the proposition.\qed
\end{proof}

\subsection{Proof of Theorem~\ref{main-thm-annulus}}\label{sec:annulus} The proof of Theorem~\ref{main-thm-annulus} is inspired from the proof of Proposition~\ref{prop-warm-up} but with important difference: in Proposition~\ref{prop-warm-up} we work with a set $\bar {\mathcal C}$ which is surrounded completely by a $\lambda$-closed contour; in the present case, we will instead work with a set that is surrounded by a contour which is $\lambda$-closed \emph{except for a small fraction of vertices} --- the harmonic measure on this small fraction of $\lambda$-open vertices 
is then controlled by Theorem~\ref{thm-Makarov}.  We encapsulate the consequence of Theorem~\ref{thm-Makarov} in the following lemma which suits for applications in the present article.
\begin{lemma}\label{lem-Makarov}
For any $0<\alpha<\beta < 1$ and $\chi>1/2$, the following holds for all connected set $\mathsf C \subseteq V_{\beta N}$ with diameter at least  $\alpha N$, for all $\mathsf A\subseteq \mathsf C$ with $|\mathsf A| \leq N \mathrm{e}^{-(\log N)^{\chi}}$, and for all $v\in \partial V_{(1+\beta) N/2}$:
\begin{equation}\label{eq-A_k-small}
\mathrm{Hm}(v, \mathsf A; \mathsf C\cup \partial V_N)=o(\log N)^{-10}\,.
\end{equation}
\end{lemma}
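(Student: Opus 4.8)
\emph{Plan of proof.} I would strip the left side of \eqref{eq-A_k-small} down to a quantity of the form $\mathrm{Hm}(\infty,\cdot\,;\mathsf C)$, to which Theorem~\ref{thm-Makarov} applies directly, and then split $\mathsf A$ into a part of large harmonic measure (controlled by Makarov) and a part of small harmonic measure (controlled by $|\mathsf A|\le N\mathrm{e}^{-(\log N)^\chi}$ together with $\diam(\mathsf C)\ge\alpha N$); the conceptual crux is that the exponent $\chi$ in the bound on $|\mathsf A|$ matches the $\chi$ of the target estimate, which is exactly what leaves room to apply Makarov at a \emph{strictly smaller} exponent. \emph{Step 1 (removing $\partial V_N$).} For every $\mathsf A\subseteq\mathsf C$ one has $\mathrm{Hm}(v,\mathsf A;\mathsf C\cup\partial V_N)\le\mathrm{Hm}(v,\mathsf A;\mathsf C)=\sum_{u\in\mathsf A}\mathrm{Hm}(v,u;\mathsf C)$, because on the event that a simple random walk from $v$ has its first visit to $\mathsf C\cup\partial V_N$ inside $\mathsf A\subseteq\mathsf C$, it has visited neither $\partial V_N$ nor $\mathsf C$ earlier, so that visit coincides with $S_{\tau_{\mathsf C}}$ (and $\tau_{\mathsf C}<\infty$ a.s.\ by recurrence of planar simple random walk).

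\emph{Step 2 (from the source $v$ to the source at infinity).} Fix any $\rho'$ with $\beta<\rho'<\tfrac{1+\beta}{2}$, so that $\mathsf C\subseteq V_{\beta N}\subsetneq V_{\rho'N}$ while $v\in\partial V_{(1+\beta)N/2}$ lies outside $V_{\rho'N}$, both $\mathsf C$ and $v$ being at $\ell_\infty$-distance at least $c_1(\beta)N$ from $\partial V_{\rho'N}$. Since any walk must reach $\partial V_{\rho'N}$ before entering $\mathsf C\subseteq V_{\rho'N}$, a first-entrance decomposition at $\partial V_{\rho'N}$ gives, for each fixed $u\in\mathsf C$,
\[
\mathrm{Hm}(v,u;\mathsf C)\ \le\ \max_{w\in\partial V_{\rho'N}}\mathrm{Hm}(w,u;\mathsf C)\,,\qquad \mathrm{Hm}(\infty,u;\mathsf C)\ \ge\ \min_{w\in\partial V_{\rho'N}}\mathrm{Hm}(w,u;\mathsf C)\,.
\]
The map $w\mapsto\mathrm{Hm}(w,u;\mathsf C)$ is nonnegative and discrete-harmonic on an annulus around $\partial V_{\rho'N}$ of macroscopic width that avoids $\mathsf C$, so chaining the discrete Harnack inequality (see, e.g., \cite{LL10}) along a ring of boundedly many balls covering $\partial V_{\rho'N}$ bounds the maximum above by $C_0(\beta)$ times the minimum; hence $\mathrm{Hm}(v,u;\mathsf C)\le C_0(\beta)\,\mathrm{Hm}(\infty,u;\mathsf C)$ for all $u\in\mathsf C$, and with Step 1 this gives $\mathrm{Hm}(v,\mathsf A;\mathsf C\cup\partial V_N)\le C_0(\beta)\,\mathrm{Hm}(\infty,\mathsf A;\mathsf C)$. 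The one delicate point is that $C_0$ must stay bounded even when $\diam(\mathsf C)$ is of order $N$; separating $v$ from $\mathsf C$ by the \emph{fixed} macroscopic box $V_{\rho'N}$ (rather than by a ball centred at a point of $\mathsf C$) keeps the relevant annulus of bounded aspect ratio, so only boundedly many Harnack steps are needed.

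\emph{Step 3 (splitting $\mathsf A$ and applying Makarov).} Fix $\chi_0\in(\tfrac12,\min\{\chi,1\})$ (nonempty since $\chi>\tfrac12$) and set $n\df\diam(\mathsf C)\in[\alpha N,2N]$, so that $\tfrac12\log N\le\log n\le\log N+1$ for $N$ large. Write $\mathsf A=\mathsf A_1\sqcup\mathsf A_2$ with $\mathsf A_2=\{x\in\mathsf A:\mathrm{Hm}(\infty,x;\mathsf C)\ge n^{-1}\mathrm{e}^{(\log n)^{\chi_0}}\}$. Using $|\mathsf A|\le N\mathrm{e}^{-(\log N)^\chi}$ and $n\ge\alpha N$,
\[
\mathrm{Hm}(\infty,\mathsf A_1;\mathsf C)\ \le\ |\mathsf A|\cdot n^{-1}\mathrm{e}^{(\log n)^{\chi_0}}\ \le\ \alpha^{-1}\,\mathrm{e}^{(\log N)^{\chi_0}-(\log N)^{\chi}+O(1)}\,,
\]
which decays faster than any power of $(\log N)^{-1}$ since $\chi_0<\chi$; on the other hand $\mathsf A_2\subseteq\{x\in\mathsf C:\mathrm{Hm}(\infty,x;\mathsf C)\ge n^{-1}\mathrm{e}^{(\log n)^{\chi_0}}\}$, so Theorem~\ref{thm-Makarov} applied to the connected set $\mathsf C$ of diameter $n$, with $(\chi_0,11)$ in place of $(\chi,\chi')$, yields $\mathrm{Hm}(\infty,\mathsf A_2;\mathsf C)\le C(\log n)^{-11}\le 2^{11}C(\log N)^{-11}$. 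Combining the three steps gives $\mathrm{Hm}(v,\mathsf A;\mathsf C\cup\partial V_N)=O((\log N)^{-11})=o((\log N)^{-10})$, with all implied constants depending only on $\alpha,\beta$ and hence uniformly over the admissible $\mathsf C$, $\mathsf A$ and $v$, as required.
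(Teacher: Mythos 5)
Your argument is correct and follows essentially the same route as the paper: reduce $\mathrm{Hm}(v,\mathsf A;\mathsf C\cup\partial V_N)$ to $\mathrm{Hm}(v,\mathsf A;\mathsf C)$, compare to $\mathrm{Hm}(\infty,\mathsf A;\mathsf C)$ via a Harnack-type bound (the paper cites \cite[Theorem 1.7.6, Theorem 2.1.3, Exercise 2.1.4]{Lawler91} for exactly this comparison), then split $\mathsf A$ at the threshold $n^{-1}\mathrm{e}^{(\log n)^{\chi'}}$ with $1/2<\chi'<\chi$ and apply Theorem~\ref{thm-Makarov} to the high-measure part. Your Step 2 merely makes the chained-Harnack argument explicit where the paper points to the literature, and your choice of Makarov exponent ($11$ vs.\ the paper's $20$) is immaterial.
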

\begin{proof}
 First, we note that 
\begin{equation}\label{eq-A_k-1}
	\mathrm{Hm}(v, \mathsf A; \mathsf C \cup \partial V_N) \leq \mathrm{Hm}(v, \mathsf A; \mathsf C)\,.
\end{equation}
By a combination of Theorem 1.7.6 (Harnack principle), Theorem 2.1.3 and Exercise 2.1.4 in \cite{Lawler91}, we have for constants $c_6, c_7,c_8>0$ which depend on $\beta$, any $u\in \mathsf C$ and arbitrary $w\in \partial V_{20N}$
$$\mathrm{Hm}(v, u;\mathsf C)\leq c_6 \mathrm{Hm}(w, u; \mathsf C)\leq c_7 \mathrm{Hm} (8N,u; \mathsf C)\leq c_8\mathrm{Hm}(\infty, u;\mathsf C)\,,$$
where $\mathrm{Hm} (8N,u; \mathsf C)$ corresponds to the $H_A^m(y)$ in \cite[Theorem 2.1.3]{Lawler91} with $ A=\mathsf C$, $m=8N$ and $y=u$ (where $A, m, y$ are notations in \cite{Lawler91}). 
Therefore,
\begin{equation}\label{eq-A_k-2}
	\mathrm{Hm}(v, \mathsf A; \mathsf C)\leq c_8\mathrm{Hm}(\infty, \mathsf A; \mathsf C)\,.
\end{equation}
Choose $\chi'$ such that $1/2<\chi'< \chi$. Since $\mathsf C$ is a connected set of radius between $(\alpha/2)N$ and $2N$, by Theorem~\ref{thm-Makarov} we deduce that for constants $c_{9}, c_{10}>0$ depending only on $\alpha$ and $\chi'$
$$\mathrm{Hm}(\infty, \{u\in \mathsf C: \mathrm{Hm}(\infty, u; \mathsf C) > c_{9}N^{-1} \mathrm{e}^{(\log N)^{\chi'}}\}; \mathsf C)\leq c_{10} (\log N)^{-20}\,.$$
Therefore,
\begin{eqnarray}\label{eq-A_k-3}
	\mathrm{Hm}(\infty, \mathsf A; \mathsf C)&=&\mathrm{Hm}(\infty, \mathsf A\cap\{u\in \mathsf C: \mathrm{Hm}(\infty, u; \mathsf C) \leq c_{9}N^{-1} \mathrm{e}^{(\log N)^{\chi'}}\}; \mathsf C)\nonumber\\
	&&+\mathrm{Hm}(\infty, \mathsf A\cap\{u\in \mathsf C: \mathrm{Hm}(\infty, u; \mathsf C) > c_{9}N^{-1} \mathrm{e}^{(\log N)^{\chi'}}\}; \mathsf C)\nonumber\\
	&\leq& (c_{9}N^{-1} \mathrm{e}^{(\log N)^{\chi'}})\cdot N \mathrm{e}^{-(\log N)^{\chi}}+c_{10} (\log N)^{-20}\nonumber\\
	&=& o(\log N)^{-10}\,.
\end{eqnarray}
Combining \eqref{eq-A_k-1}, \eqref{eq-A_k-2} and \eqref{eq-A_k-3}, we finally conclude \eqref{eq-A_k-small}, completing the proof of the lemma. \qed
\end{proof}

\begin{proof}[Proof of Theorem~\ref{main-thm-annulus}]
In what follows, we implement the proof of Theorem~\ref{main-thm-annulus} in three steps.

\noindent {\bf Step 1: construct an almost closed surrounding contour.} Consider $\lambda > 0$. Our goal is to provide a lower bound on the probability that there exists a $\lambda$-open path with length less than $N \mathrm{e}^{(\log N)^{\chi}}$ connecting $\partial V_{\alpha N}$ and $\partial V_{\beta N}$ for some $\chi>1/2$. Note that the distance between $\partial V_{\alpha N}$ and $\partial V_{\beta N}$ is the same as the distance between $V_{\partial N}$ and $\partial V_{\beta N}$.  This motivates the following definitions for $i\geq 1$:
\begin{equation}\label{eq-mathcal-A-B-C}
\begin{split}
\mathcal A_i & = \{v\in V_{N}\cap \mathcal H_{N, \lambda}: D_{N, \lambda} (V_{\alpha N}\cap \mathcal H_{N, \lambda}, v) = i\}\,, \\
\mathcal B_{i}&= \{v\in V_{N} \setminus (\mathcal H_{N, \lambda} \cup V_{\alpha N}): v\sim u \mbox{ for some } u\in \mathcal A_{j} \mbox{ and } 1\leq j < i\} \cup (\partial V_{\alpha N}\setminus \mathcal H_{N, \lambda}) \,,\\
\mathcal I_{i}&= (\cup_{j=1}^{i} \mathcal A_j) \cup \mathcal B_{i} \cup V_{\alpha N}\,.
\end{split}
\end{equation}
It would be beneficial to picture the preceding definitions obtained from the following exploration process. We set $\mathcal A_0=V_{\alpha N}\cap \mathcal H_{N, \lambda}$, $\mathcal B_0=\partial V_{\alpha N}\setminus \mathcal H_{N, \lambda}$, $\mathcal I_0= V_{\alpha N}$, and for $i=0, 1, 2, \ldots$, we see that inductively
\begin{align*}
	\mathcal A_{i+1}& = \{v\in (V_{N} \setminus \mathcal I_i)\cap \mathcal H_{N, \lambda}: v\sim u \mbox{ for some } u\in \mathcal A_i\}\,,\\
	\mathcal B_{i+1}&= \{v\in (V_{N} \setminus \mathcal I_i)\setminus \mathcal  H_{N, \lambda}: v\sim u \mbox{ for some } u\in \mathcal A_i\} \cup \mathcal B_i\,,\\
	\mathcal I_{i+1}&= \mathcal I_i \cup \mathcal A_{i+1}\cup \mathcal B_{i+1}\,.
\end{align*}
In other words, we can think of constructing the sets $\mathcal A_{i+1}, \mathcal B_{i+1}, \mathcal I_{i+1}$ for $i\geq 0$ using the following procedure. At stage $i+1$, we explore all the neighbors of $\mathcal A_i$ that is in $V_{N}\setminus \mathcal I_i$ (that is, vertices which have not been explored): if the vertex is in $\mathcal H_{N, \lambda}$ then we put it to $\mathcal A_{i+1}$, otherwise we put it to $\mathcal B_{i + 1}$. For $i\geq 1$ it is clear that $\mathcal A_i$ records all the vertices in $V_{N}$ that are of chemical distance $i$ to $V_{\alpha N}$ (i.e., all vertices in $V_{N}\setminus V_{\alpha N}$ that are of chemical distance $i$ to $\partial V_{\alpha N}$), $\mathcal B_i$ records all the closed vertices we have encountered, and  $\mathcal I_i$ records all the vertices that have been explored before (or at) stage $i$ --- thus, this construction from the exploration algorithm is indeed consistent with definitions given in \eqref{eq-mathcal-A-B-C}. Furthermore, we observe that the following hold for all $i\geq 1$ as long as $\mathcal I_i \cap \partial V_N = \emptyset$:
\begin{itemize}
	\item  $\mathcal A_i$'s are disjoint from each other.
	\item  $\mathcal I_i$ is a connected set in $V_{N}$.
	\item  $\partial \mathcal I_i$ (i.e, $\{w\in \mathcal I_i: w'\sim w \mbox{ for some } w' \not\in \mathcal I_i\}$) is a subset of $\mathcal A_i\cup \mathcal B_i$.
	\item Let $\mathcal C_i\deq\{u: \mathrm{Hm}(\infty, u;\mathcal I_i)>0\}$ (note that by definition $\mathcal C_i$ is a surrounding contour). Then $\mathcal C_i\subseteq \partial \mathcal I_i  \subseteq \mathcal A_i\cup \mathcal B_i$.
\end{itemize}

Now suppose that the event $\mathcal E\deq\{D_{N, \lambda}(\partial V_{\alpha N}, \partial V_{\beta N}) \geq N \mathrm{e}^{(\log N)^{\chi}}\}$ occurs, then we must have that $\mathcal I_i$ is disjoint from $V_N\setminus V_{\beta N}$ for all $0\leq i< N \mathrm{e}^{(\log N)^{\chi}}$. Further, since $\mathcal A_i$'s are disjoint from each other, we see (from a simple volume consideration) that there exists at least an $i_0 < N \mathrm{e}^{(\log N)^{\chi}}$ such that 
\begin{equation}\label{eq-i-0}
	|\mathcal A_{i_0}| \leq N \mathrm{e}^{-(\log N)^{\chi}}\,.
\end{equation}
We let $\tau$ be the minimal number $i_0$ which satisfies \eqref{eq-i-0}. In summary, we have
\begin{equation}\label{disjoint-union}
	\mathcal E\subseteq \mathcal E'\deq \bigcup_{\substack{0\leq k< N \mathrm{e}^{(\log N)^{\chi}}\\ (A_0, \ldots A_k, B_0,\ldots B_k)\in \mathcal P_k}} \{\tau = k\}\cap \{\mathcal A_i = A_i, \mathcal B_i = B_i \mbox{ for } 0\leq i\leq k\}
\end{equation}
where $\mathcal P_k$ contains all $(A_0, \ldots A_k, B_0,\ldots B_k)$ such that  $\{\tau = k\}\cap \{\mathcal A_i = A_i, \mathcal B_i = B_i \mbox{ for } 0\leq i\leq k\} \neq \emptyset$. In particular, they satisfy the following properties:
\begin{itemize}
	\item Denote $I_k\deq \cup_{i=0}^k A_i \cup B_k \cup V_{\alpha N}$. Then $I_k$ is a connected set in $V_{\beta N}$.
	\item Denote $C_k\deq\{u: \mathrm{Hm}(\infty, u; I_k)>0\}$. Then $C_k\subseteq A_k\cup B_k$.
	\item $|A_k|\leq N \mathrm{e}^{-(\log N)^{\chi}}$.
\end{itemize}

Now we fix any $0\leq k< N \mathrm{e}^{(\log N)^{\chi}}$ and any $(A_0, \ldots A_k, B_0,\ldots B_k)\in \mathcal P_k$. It is not hard to verify that (this is indeed obvious in light of the  construction of $\mathcal A_i, \mathcal B_i$ and $\mathcal I_i$ via exploration process)
\begin{equation}\label{C_k-measurable}
	\{\tau = k\}\cap \{\mathcal A_i = A_i, \mathcal B_i = B_i \mbox{ for } 0\leq i\leq k\}\in \mathcal F_{I_k}\,.
\end{equation}
\noindent {\bf Step 2: control the harmonic measure.} Since $V_{\alpha N} \subseteq I_k\subseteq V_{\beta N}$, we have from Lemma~\ref{random-walk-lemma-1} that  for each $v\in \partial V_{(1+\beta) N/2}$
\begin{equation}\label{eq-C_k-1}
	\mathrm{Hm}(v, I_k; I_k \cup \partial V_N) \geq \mathrm{Hm}(v, \partial V_{\alpha N}; \partial V_{\alpha N} \cup \partial V_N) \geq c_3\,.
\end{equation}
In addition, we have that
\begin{equation}\label{eq-C_k-2}
	\{u\in I_k: \mathrm{Hm}(v, u;I_k\cup \partial V_N)>0\}=\{u\in I_k: \mathrm{Hm}(\infty, u;I_k)>0\}=C_k \subseteq A_k\cup B_k\,.
\end{equation}
By an application of  Lemma~\ref{lem-Makarov},  we get that
\begin{equation}\label{eq-Makarov-Section-2}
\mathrm{Hm}(v, A_k; I_k\cup \partial V_N)=o(\log N)^{-10}\,.
\end{equation}
Combined \eqref{eq-C_k-1} and \eqref{eq-C_k-2}, it yields that
\begin{equation}\label{eq-B-k-harmonic-measure}
\mathrm{Hm}(v, B_k; I_k\cup \partial V_N)\geq c_3-o(\log N)^{-10}\,.
\end{equation}

\noindent {\bf Step 3: use Markov field property of GFF.} Conditioned on $\mathcal F_{I_k}$, the field $\{\eta_{N, v}: v\in V_N \setminus I_k\}$ is again distributed as a GFF. In particular, for each $v\in \partial V_{(1+\beta) N/2}$, we have
\begin{equation}\label{eq-expression-conditional-expectation}
	\E(\eta_{N, v} \mid \mathcal F_{I_k}) = \sum_{u\in I_k} \mathrm{Hm}(v, u; I_k \cup \partial V_N) \cdot \eta_{N, u}\,.
\end{equation}
 Now on the event $\{\tau = k\}\cap \{\mathcal A_i = A_i, \mathcal B_i = B_i \mbox{ for } 0\leq i\leq k\}$, we have by definition that $\eta_{N, u} \geq \lambda$ for all $u\in B_k$, so we can derive from \eqref{eq-expression-conditional-expectation} that
\begin{align*}
	\E(\eta_{N, v} \mid \mathcal F_{I_k}) &= \sum_{u\in I_k} \mathrm{Hm}(v, u; I_k \cup \partial V_N) \cdot \eta_{N, u}\\
	&= \sum_{u\in A_k} \mathrm{Hm}(v, u; I_k \cup \partial V_N) \cdot \eta_{N, u}+ \sum_{u\in B_k} \mathrm{Hm}(v, u; I_k \cup \partial V_N) \cdot \eta_{N, u}\\
	&\geq (c_3-o((\log N)^{-10}))\lambda - o((\log N)^{-10})\sup_{u\in V_N} |\eta_{N, u}|\,,
\end{align*}
where in the last inequality we used \eqref{eq-Makarov-Section-2} and \eqref{eq-B-k-harmonic-measure}.
Define $\Lambda_{\mathrm{bad}} = \{\sup_{u\in V_N} |\eta_{N, u}| \geq 100 \log N\}$. By a straightforward computation, we have
\begin{equation}\label{eq-bad-event}
	\P(\Lambda_{\mathrm{bad}}) \leq N^{-20}\,.
\end{equation}
We can assume without loss that $\Lambda_{\mathrm{bad}}$ does not occur. To be precise, for sufficiently large $N$, on the event $\{\tau = k\}\cap \{\mathcal A_i = A_i, \mathcal B_i = B_i \mbox{ for } 0\leq i\leq k\}\setminus \Lambda_{\mathrm{bad}}$, we have
$$\E(\eta_{N, v} \mid \mathcal F_{I_k}) \geq 9c_3\lambda/10\,.$$
Recall the definition of $X$ in \eqref{eq-define-X}. Then on the same event, we have $\E (X \mid \mathcal F_{I_k}) \geq 9c_3\lambda /10$ and $\var (X \mid \mathcal F_{I_k}) \leq \var X\leq c_4$. Thus (still on the same event), 
$$\P(X\geq c_3\lambda/2 \mid \mathcal F_{I_k})\geq 1-\P(Z(c_4)\geq 2c_3\lambda/5)\,,$$
where $Z(c_4)$ is a mean zero Gaussian variable with variance $c_4$.
By \eqref{C_k-measurable}, this gives
\begin{eqnarray*}
	&&\P(X \geq c_3 \lambda /2, \tau = k, \mathcal A_i = A_i, \mathcal B_i = B_i \mbox{ for } 0\leq i\leq k)\\
	&=&\E (\P(X\geq c_3\lambda/2 \mid \mathcal F_{I_k})\1_{\{\tau = k\}\cap \{\mathcal A_i = A_i, \mathcal B_i = B_i \mbox{ for } 0\leq i\leq k\}})\\
	&\geq&(1-\P(Z(c_4)\geq 2c_3\lambda/5))\P(\{\tau = k\}\cap \{\mathcal A_i = A_i, \mathcal B_i = B_i \mbox{ for } 0\leq i\leq k\}\setminus \Lambda_{\mathrm{bad}})\,.
\end{eqnarray*}
Summing this over all $0\leq k< N \mathrm{e}^{(\log N)^{\chi}}$ and all $(A_0, \ldots A_k, B_0,\ldots B_k)\in \mathcal P_k$ and using \eqref{disjoint-union}, we have
$$\P(X \geq c_3 \lambda /2)\geq (1-\P(Z(c_4)\geq 2c_3\lambda/5)) \P(\mathcal E'\setminus \Lambda_{\mathrm{bad}})\,.$$
Therefore,
\begin{align*}
	\P(\mathcal E)\leq\P(\mathcal E')&\leq \P(\mathcal E'\setminus \Lambda_{\mathrm{bad}})+\P(\Lambda_{\mathrm{bad}})\\
	&\leq \frac{\P(X \geq c_3\lambda/2)}{1-\P(Z(c_4)\geq 2c_3\lambda/5) }+\P(\Lambda_{\mathrm{bad}})\\
	&\leq \frac {\P(Z(c_4)\geq c_3\lambda/2)}{1-\P(Z(c_4)\geq 2c_3\lambda/5)}+\P(\Lambda_{\mathrm{bad}})\,.
\end{align*}
Combined with \eqref{eq-bad-event}, this completes the proof of Theorem~\ref{main-thm-annulus}. \qed
\end{proof}

\section{Percolation of the continuous loop soup}\label{sec-continuous-loop}

In this section we prove an analogous result to Theorem~\ref{thm-random-walk-loop-soup-critical} for the continuous loop soups defined on the \emph{metric graph} of $\mathcal G_N = (V_N, E_N)$ at critical intensity $1/2$. The result in this section will not be used in the derivation of Theorem~\ref{thm-random-walk-loop-soup-critical} in Section~\ref{sec-random-walk-loop-soup}. However, our proof method of Theorem~\ref{thm-random-walk-loop-soup-critical} is hugely inspired by the consideration of the continuous loop soup. Therefore,  we include the present section, with the hope of conveying the source of insight.

The continuous loop soup as well as the Gaussian free field on the metric graph were considered in \cite{Lupu14}. We follow the setup and definitions there. We let $\tilde {\mathcal G}_N$ be the metric graph (or the cable system) of $\mathcal G_N$ where each edge in $\tilde {\mathcal G}_N$ has length $\frac 1 2$. On $\tilde {\mathcal G}_N$ we can define a standard Brownian motion $B^{\tilde {\mathcal G}_N}$, so that $B^{\tilde {\mathcal G}_N}$ when restricted to $V_N$ is the same as the aforementioned continuous-time sub-Markovian jump process $(X_t)$. Let $G_{\tilde {\mathcal G}_N}(u,v)$ be the Green's function of $B^{\tilde {\mathcal G}_N}$, so that  for $u,v\in V_N$, $G_{\tilde {\mathcal G}_N}(u,v)=\frac 1 4 G_{V_N}(u,v)$, the Green's function of $(X_t)$. Let $\{\tilde {\eta}_{N, v}:v\in \tilde {\mathcal G}_N\}$ be the Gaussian free field on $\tilde {\mathcal G}_N$ with covariance function $ G_{\tilde {\mathcal G}_N}(u,v)$. Then the restriction of $\{\tilde {\eta}_{N, v}:v\in \tilde {\mathcal G}_N\}$ to $V_N$ is the same as the Gaussian free field $\{ {\eta}_{N, v}:v\in V_N\}$. Moreover, $\{\tilde {\eta}_{N, v}:v\in \tilde {\mathcal G}_N\}$ can be obtained from $\{ {\eta}_{N, v}:v\in V_N\}$ by (for each edge $e = (u, v)$) independently sampling a variance 2 Brownian bridge of length $\frac 1 2$ with values $\eta_{N, u}$ and $\eta_{N, v}$ at the endpoints. In particular, as shown in \cite{Lupu14} $\{\tilde {\eta}_{N, v}:v\in \tilde {\mathcal G}_N\}$ has a continuous realization.

Now we can associate to $B^{\tilde {\mathcal G}_N}$ a measure $\tilde \mu_N$ on continuous loops in $\tilde {\mathcal G}_N$, and for each $\alpha>0$ consider the continuous loop soup $\tilde {\mathcal L}_{\alpha,N}$ which is a Poisson point process with intensity $\alpha \tilde \mu_N$. The loops of $\tilde {\mathcal L}_{\alpha,N}$ may be partitioned into clusters. For $u,v\in \tilde {\mathcal G}_N$, we define the chemical distance of $\tilde {\mathcal L}_{\alpha,N}$ between $u$ and $v$ by (below $|\gamma|$ is the length of the path $\gamma$) 
$$ D_{\tilde {\mathcal L}_{\alpha, N}} (u, v) = \min_{\gamma} |\gamma|\,,$$
where the minimum is over all path $\gamma\subseteq\tilde {\mathcal G}_N$ joining $u$ and $v$ that stays within a cluster of $\tilde {\mathcal L}_{\alpha,N}$.
\begin{theorem}\label{thm-loop-soup-critical}
	For any $0<\alpha<\beta < 1$ and $\chi>1/2$, there exists a constant $c>0$ such that for all $N$
	$$\P(D_{\tilde {\mathcal L}_{1/2, N}}(\partial V_{\alpha N}, \partial V_{\beta N}) \leq N \mathrm{e}^{(\log N)^{\chi}}) \geq c\,.$$
\end{theorem}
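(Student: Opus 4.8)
The plan is to adapt the contour/exploration strategy from the proof of Theorem~\ref{main-thm-annulus}, but in reverse: instead of bounding the probability that a surrounding closed contour exists, we want to produce, with positive probability, a \emph{short} connected crossing inside a sign cluster of the continuous GFF on the metric graph $\tilde{\mathcal G}_N$. The key structural input is Lupu's theorem from \cite{Lupu14}: on the metric graph, the clusters of the critical loop soup $\tilde{\mathcal L}_{1/2,N}$ coincide (in law, jointly with the occupation field) with the sign clusters of $\{\tilde\eta_{N,v}\}$, i.e. the connected components of $\{v\in\tilde{\mathcal G}_N:\tilde\eta_{N,v}\ne 0\}$ on which $\tilde\eta$ keeps a constant sign. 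So it suffices to show that with probability bounded below, there is a path $\gamma\subseteq\tilde{\mathcal G}_N$ from $\partial V_{\alpha N}$ to $\partial V_{\beta N}$ of length at most $N\mathrm e^{(\log N)^\chi}$ that stays in one sign cluster, say the positive one $\{\tilde\eta>0\}$.

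**First I would** set up the exploration of the positive cluster of $\tilde\eta$ starting from $\partial V_{\alpha N}$, in direct analogy with \eqref{eq-mathcal-A-B-C}: grow the connected set $\mathcal I_i$ of vertices/cable-segments reachable from $\partial V_{\alpha N}$ within the region $\{\tilde\eta>0\}$ by chemical distance $\le i$, stop the cluster at the ``frontier'' where $\tilde\eta$ hits $0$. Suppose for contradiction (toward a lower bound) that $D_{\tilde{\mathcal L}_{1/2,N}}(\partial V_{\alpha N},\partial V_{\beta N})\geq N\mathrm e^{(\log N)^\chi}$; then the exploration stays inside $V_{\beta N}$ for all $i<N\mathrm e^{(\log N)^\chi}$, and by the same volume-pigeonhole argument as in \eqref{eq-i-0} there is some level $\tau=k<N\mathrm e^{(\log N)^\chi}$ with a thin layer $|A_k|\le N\mathrm e^{-(\log N)^\chi}$. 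As before, $\{\tau=k\}\cap\{\mathcal A_i=A_i,\mathcal B_i=B_i\}$ is measurable with respect to $\mathcal F_{I_k}$ where $I_k$ is connected, $V_{\alpha N}\subseteq I_k\subseteq V_{\beta N}$, and the outer boundary of $I_k$ (the surrounding contour) is contained in $A_k\cup B_k$. The crucial sign property here is that on $B_k$ the value of $\tilde\eta$ is $0$ (the frontier), and on the small set $A_k$ it is positive — the opposite role-assignment from the GFF proof. We now run the conditional-expectation argument: conditioned on $\mathcal F_{I_k}$, the field outside is a GFF on $\tilde{\mathcal G}_N\setminus I_k$ with boundary values $0$ on $\partial V_N$, values $0$ on $B_k$, and positive values on the tiny set $A_k$ whose harmonic measure (from a typical $v\in\partial V_{(1+\beta)N/2}$) is $o((\log N)^{-10})$ by Lemma~\ref{lem-Makarov}. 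Hence $\E(\tilde\eta_{N,v}\mid\mathcal F_{I_k})$ is essentially $0$ — more precisely it is $o((\log N)^{-10})\cdot\sup|\tilde\eta|$ — so the observable $X=|\partial V_{(1+\beta)N/2}|^{-1}\sum_v\tilde\eta_{N,v}$ has conditional mean near $0$ and bounded conditional variance. But the whole point is that $X$ \emph{unconditionally} has a non-degenerate Gaussian law with variance of order $1$, so $\P(X>\delta)$ is bounded below by a constant $\kappa>0$ for some fixed $\delta$; reconciling this with the conditional near-$0$ mean on $\mathcal E'$ (up to the bad event $\Lambda_{\mathrm{bad}}=\{\sup|\tilde\eta|\ge 100\log N\}$, which has probability $\le N^{-20}$) forces $\P(\mathcal E')\le 1-\kappa/2$, i.e. the complement — existence of a short crossing — has probability $\ge\kappa/2$. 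This yields the theorem.

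**The main obstacle** I expect is the following subtlety in the frontier/sign structure on the metric graph: when the positive cluster of $\tilde\eta$ is stopped, its boundary consists of points where $\tilde\eta=0$, but these sit in the \emph{interiors of cable edges}, not at lattice vertices — so the set $I_k$ is not a subgraph of $\mathcal G_N$ and the strong Markov / harmonic-measure identity $\E(\tilde\eta_v\mid\mathcal F_{I_k})=\sum_{u}\mathrm{Hm}(v,u;\cdot)\tilde\eta_u$ must be taken for the Brownian motion on the cable system stopped at $\partial(\tilde{\mathcal G}_N\setminus I_k)$. One has to check that Lemma~\ref{lem-Makarov} still applies, which is where the connectedness and diameter bounds on $I_k$ (hence on the lattice ``skeleton'' $\hat I_k$ of vertices it touches, which does have diameter $\ge\alpha N$ and lies in $V_{\beta N}$) are used; the harmonic measure of $A_k$ restricted to the skeleton is controlled by Makarov, and the contribution of the interior-of-edge zero set vanishes because $\tilde\eta=0$ there. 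A second, more technical, point is making the pigeonhole step rigorous on the cable system, since $A_k$ is now a union of cable-segments rather than vertices; but ``$|A_k|$'' can be replaced by the number of lattice vertices it meets, and the argument goes through because a chemical-distance-$N\mathrm e^{(\log N)^\chi}$ ball that stays in $V_{\beta N}$ meets at most $|V_{\beta N}|\le N^2$ vertices, so some layer among $N\mathrm e^{(\log N)^\chi}$ of them meets at most $N^2/(N\mathrm e^{(\log N)^\chi})=N\mathrm e^{-(\log N)^\chi}$ of them. Once these metric-graph bookkeeping issues are handled, the probabilistic core is identical to the proof of Theorem~\ref{main-thm-annulus}, only with the inequality run in the opposite direction to extract a positive lower bound rather than a small upper bound.
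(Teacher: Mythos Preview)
Your overall strategy --- reduce to sign clusters via Lupu's coupling, run the exploration/pigeonhole to find a thin layer $A_k$, apply Lemma~\ref{lem-Makarov} to show the conditional mean of the observable $X$ is essentially $0$ --- matches the paper exactly, and your discussion of the metric-graph bookkeeping (zeros on cable interiors, lattice skeleton for Makarov) is correct. However, there is a genuine gap in the final probabilistic step.

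You write that on the bad event the conditional mean of $X$ is near $0$ with bounded conditional variance, while unconditionally $X$ is a non-degenerate mean-zero Gaussian, and that ``reconciling'' these forces $\P(\mathcal E')\le 1-\kappa/2$. But there is nothing to reconcile: unconditionally $X$ also has mean $0$, so a conditional law with mean $\approx 0$ and variance $\le c_4$ is perfectly compatible with $\P(\mathcal E')=1$. In Theorem~\ref{main-thm-annulus} the contradiction came from a \emph{mean shift} (conditional mean $\ge c_3\lambda>0$); here there is no such shift, and your argument as written does not close.

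What the paper actually uses is a \emph{uniform variance drop}: since $\temp\supseteq V_{\alpha N}$, one has
\[
\var(X\mid\mathcal F_\tau)\le \frac{1}{4|\partial V_{(1+\beta)N/2}|^2}\sum_{u,v}G_{V_N\setminus V_{\alpha N}}(u,v)\le \var X - c_{11}
\]
for a constant $c_{11}>0$ (this is Lemma~\ref{random-walk-lemma-3}, a Green's function comparison). Then on $\mathcal E'\setminus\Lambda_{\mathrm{bad}}$ the conditional law of $X$ is Gaussian with mean $o(1)$ and variance $\le\var X-c_{11}$, whereas unconditionally $X$ has variance exactly $\var X$. Choosing a threshold $t=\epsilon+s\sqrt{\var X-c_{11}}$ with $s$ large, one gets $\P(X\le t\mid\mathcal F_\tau)\ge\P(Z\le s)$ on the bad event, while unconditionally $\P(X\le t)\le\P(Z\le s\theta)$ for some fixed $\theta<1$; the ratio is then strictly below $1$. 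You should insert this variance-reduction step (and the supporting random-walk lemma) in place of your ``reconciling'' sentence.
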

\begin{remark}\label{rem-3.2}
	We expect that  $\P(D_{\tilde {\mathcal L}_{1/2, N}}(\partial V_{\alpha N}, \partial V_{\beta N}) < \infty)$ is strictly less than 1; see Remark~\ref{rem-main}. 
\end{remark}

By \cite[Proposition 2.1]{Lupu14}, there is a coupling between $\tilde {\mathcal L}_{1/2, N}$ and a continuous version of $\{\tilde {\eta}_{N, v}:v\in \tilde {\mathcal G}_N\}$ such that the clusters of loops of $\tilde {\mathcal L}_{1/2, N}$ are exactly the sign clusters of $\{\tilde {\eta}_{N, v}:v\in \tilde {\mathcal G}_N\}$. In light of this, define for $u\neq v$
$$ D_{\tilde {\eta}_{N}} (u, v) = \min_{\gamma} |\gamma|\,,$$
where the minimum is over all path $\gamma\subseteq\tilde {\mathcal G}_N$ joining $u$ and $v$ such that $\tilde {\eta}_{N,\gamma}$ (including $u$ and $v$) are of the same sign (\emph{strictly} plus or minus). We continue to use the convention that $D_{\tilde {\eta}_{N}} (A, B) = \min_{u\in A, v\in B} D_{\tilde {\eta}_{N}}(u, v)$ for $A, B \subseteq \tilde {\mathcal G}_N$, and the convention that $\min \emptyset  = \infty$.
In order to prove Theorem~\ref{thm-loop-soup-critical}, it suffices to prove the following proposition. 
\begin{prop}\label{prop-continuous-loop-soup}
	For any $0<\alpha<\beta < 1$, $\chi>1/2$, there exists a constant $c>0$ such that for all $N$
	$$\P(D_{\tilde {\eta}_{N}}(\partial V_{\alpha N}, \partial V_{\beta N}) \leq N \mathrm{e}^{(\log N)^{\chi}}) \geq c\,.$$
\end{prop}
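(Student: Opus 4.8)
The plan is to adapt the strategy of Proposition~\ref{prop-warm-up} and Theorem~\ref{main-thm-annulus}, but now "success" means the \emph{existence} of a short monochromatic path rather than its non-existence, so we will argue by contradiction: we suppose the sign-percolation distance $D_{\tilde\eta_N}(\partial V_{\alpha N},\partial V_{\beta N})$ exceeds $N\mathrm{e}^{(\log N)^\chi}$ with probability close to $1$, and derive a contradiction with a tail estimate for a suitable averaged observable. As in Section~2, the key is that a large sign-crossing distance forces the existence of a surrounding barrier on which $\tilde\eta_N$ must avoid (say) being strictly positive, i.e.\ must be $\le 0$; and by continuity of $\tilde\eta_N$ on the metric graph, staying $\le 0$ on a full topological barrier separating $\partial V_{\alpha N}$ from $\partial V_{\beta N}$ is a genuine hard-wall constraint. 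The first step is therefore to run the same exploration process as in the proof of Theorem~\ref{main-thm-annulus}, exploring the monochromatic (say, positive) cluster of $\partial V_{\alpha N}$ through the \emph{discrete} vertices $V_N$, but with the twist that on each edge of the metric graph the Brownian bridge must also not change sign; this produces a connected explored set $I_k\subseteq V_{\beta N}$, measurable with respect to $\mathcal F_{I_k}$, together with a layer $A_k$ of small cardinality ($|A_k|\le N\mathrm{e}^{-(\log N)^\chi}$) by the volume argument, and a "closed" layer $B_k$ on which $\tilde\eta_N\le 0$ (or where a bridge forced a sign change, an event we can absorb into the bad event).

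Next I would control the harmonic measure exactly as in Lemma~\ref{lem-Makarov}: for $v\in\partial V_{(1+\beta)N/2}$, the harmonic measure of $I_k\cup\partial V_N$ seen from $v$ puts mass $\ge c_3$ on $I_k$, mass $o((\log N)^{-10})$ on $A_k$ by Makarov (Theorem~\ref{thm-Makarov}), and hence mass $\ge c_3-o((\log N)^{-10})$ on $B_k$. Conditioning on $\mathcal F_{I_k}$, the Markov field property gives $\E(\eta_{N,v}\mid\mathcal F_{I_k})=\sum_{u\in I_k}\mathrm{Hm}(v,u;I_k\cup\partial V_N)\eta_{N,u}$; on $B_k$ we now have $\eta_{N,u}\le 0$, and off $\Lambda_{\mathrm{bad}}$ the contribution from $A_k$ is $o((\log N)^{-10})\cdot O(\log N)=o(1)$, so $\E(X\mid\mathcal F_{I_k})\le o(1)$ where $X$ is the boundary average of \eqref{eq-define-X}. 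But unconditionally $X$ is mean-zero Gaussian with bounded variance, so $\P(X\ge -1)\ge\frac12$ say; combining, if the long-distance event had probability $\ge 1-\delta$ for small $\delta$, then $\P(X\le o(1))\ge(1-\delta)\cdot\frac{1}{2}$... this does not immediately contradict anything, so the observable has to be chosen to detect the \emph{wrong} sign forcefully. The right move is to instead lower-bound the probability of the event directly: on the complement of the long-distance event one builds the short crossing, but that is circular; so I would instead use the observable to show that with probability bounded below, \emph{no} such positive barrier exists \emph{and} symmetrically no negative barrier exists, forcing a monochromatic crossing of one of the two signs. Concretely, run the above argument for the positive cluster to conclude $\P(\text{positive barrier in }V_{\beta N}\text{ at distance} <N\mathrm{e}^{(\log N)^\chi})$ is bounded away from $1$ — using that conditionally $\E(X\mid\mathcal F_{I_k})\le o(1)$ while $\P(X\ge\epsilon_0)$ is bounded below for fixed small $\epsilon_0>0$, exactly mirroring \eqref{eq-X-tail} — and symmetrically for the negative cluster with observable $-X$; since the long-distance event is contained in the union of "a positive barrier exists" and "a negative barrier exists" (a short crossing of either sign would be a success), a union bound and the constant-probability estimates give $\P(D_{\tilde\eta_N}(\partial V_{\alpha N},\partial V_{\beta N})\le N\mathrm{e}^{(\log N)^\chi})\ge c$.

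The main obstacle, and the place where this differs substantively from Section~2, is the passage from the discrete GFF to the metric-graph GFF: the exploration process lives on $V_N$, but the monochromatic path must be realized as a continuous path in $\tilde{\mathcal G}_N$, so I must check that (i) if $u,v\in V_N$ are adjacent and $\eta_{N,u},\eta_{N,v}>0$ then with probability bounded below the connecting bridge stays $>0$, contributing to $D_{\tilde\eta_N}$ an amount $\le\frac12$ per edge, so a discrete monochromatic path of length $L$ yields $D_{\tilde\eta_N}\le L/2<N\mathrm{e}^{(\log N)^\chi}$; and (ii) conversely, a topological barrier in $\tilde{\mathcal G}_N$ on which $\tilde\eta_N\le 0$ projects to a structure on $V_N$ controllable by the exploration. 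Point (i) requires a Brownian-bridge estimate, and more delicately, the events "bridge stays positive" across different edges are independent given the discrete field, so I would need to handle the product of these probabilities along the explored cluster — this is exactly the kind of multiplicative loss that the random-current/metric-graph machinery of Section~\ref{sec-random-walk-loop-soup} is designed to circumvent, but for the \emph{continuous} loop soup Lupu's coupling identifies sign clusters with loop clusters exactly, so here the cleanest route is to phrase everything directly in terms of $\tilde\eta_N$ and use that, off $\Lambda_{\mathrm{bad}}$, the discrete field on the explored cluster is bounded by $O(\log N)$ while the typical value on the $B_k$-layer is $\le 0$; the entropic-repulsion subtlety (what is the typical value of a GFF forced $\le 0$ on $B_k$) is precisely what does \emph{not} arise here because we only need a \emph{one-sided} bound on $\E(X\mid\mathcal F_{I_k})$ and an anti-concentration bound on the unconditional $X$, both of which are elementary. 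Thus I expect the technical heart to be a clean statement that a discrete monochromatic crossing of length $<N\mathrm{e}^{(\log N)^\chi}$ exists with probability bounded below, from which the continuous statement follows by the per-edge bridge-positivity estimate combined with a FKG-type argument to avoid the multiplicative loss.
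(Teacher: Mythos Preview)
Your proposal has a genuine gap at the decisive step. You correctly reach the conclusion that, on the long-distance event $\mathcal E$ and off $\Lambda_{\mathrm{bad}}$, the conditional mean satisfies $\E(X\mid\mathcal F_{I_k})=o(1)$. You also correctly observe that this alone ``does not immediately contradict anything'': a mean-zero Gaussian $X$ with variance $\sigma^2$ is perfectly consistent with being, conditionally on $\mathcal E$, a Gaussian of mean $o(1)$ and variance $\leq\sigma^2$. Your attempted fix --- bounding the probability of a positive barrier away from $1$ using that $\P(X\geq\epsilon_0)$ is bounded below while $\E(X\mid\mathcal F_{I_k})\leq o(1)$ --- does not work, because a Gaussian with mean $o(1)$ and \emph{some} variance in $[0,\var X]$ can have $\P(X\geq\epsilon_0)$ anywhere in $[0,1/2]$; nothing forces the conditional and unconditional tail probabilities to be incompatible. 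The union/intersection decomposition over signs and the appeal to FKG in your last paragraph do not address this; and incidentally $\mathcal E$ is contained in the \emph{intersection} of the two one-sign events, not the union.

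What is missing is a control on the \emph{conditional variance}. The paper's proof uses Lemma~\ref{random-walk-lemma-3} to show that on $\mathcal E$ one has $\var(X\mid\mathcal F_\tau)\leq\var X-c_{11}$ for a fixed $c_{11}>0$: since $\tilde{\mathcal I}_\tau\supseteq V_{\alpha N}$, the Green's function in the complement is dominated by $G_{V_N\setminus V_{\alpha N}}$, which is strictly smaller than $G_{V_N}$ by a definite amount after the averaging in $X$. With this in hand, the contradiction is obtained by comparing $\P(X\leq t)$ unconditionally (a $N(0,\var X)$ tail) against $\P(X\leq t\mid\mathcal E)$ (at least a $N(o(1),\var X-c_{11})$ tail); choosing $t$ large makes the ratio strictly less than $1$, so $\P(\mathcal E\setminus\Lambda_{\mathrm{bad}})<1$ uniformly in $N$. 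This variance-drop idea is the substantive new ingredient relative to Section~\ref{sec:annulus}, and it is absent from your proposal.

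Two smaller points. First, on the metric graph the boundary set $\tilde{\mathcal B}_k$ consists of points where $\tilde\eta_N=0$ exactly (by continuity), not merely $\leq 0$; this makes the harmonic-extension computation cleaner than you suggest and removes any need for entropic repulsion here. Second, your last paragraph about lifting a discrete crossing to a continuous one via per-edge bridge positivity and FKG is unnecessary: the exploration and the Markov property are carried out directly on $\tilde{\mathcal G}_N$, so no such transfer is needed.
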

The following lemma will be useful in the proof of Proposition~\ref{prop-continuous-loop-soup}.
\begin{lemma}\label{random-walk-lemma-3}
		There exists a constant $c_{11}>0$ which depends on $\alpha$ and $\beta$ such that
		\begin{equation}\label{random-walk-lemma-3-eq}
			\frac{1}{4}\frac{1}{|\partial V_{(1+\beta)N/2}|^2}(\sum_{u,v\in \partial V_{(1+\beta)N/2}} G_{V_N}(u,v)-\sum_{u,v\in \partial V_{(1+\beta)N/2}} G_{V_N\setminus V_{\alpha N}}(u,v))\geq c_{11}\,.
		\end{equation}
\end{lemma}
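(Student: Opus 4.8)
\textbf{Proof proposal for Lemma~\ref{random-walk-lemma-3}.}

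The plan is to interpret the left-hand side probabilistically. Write $W = \partial V_{(1+\beta)N/2}$, and recall that for $u,v \in V_N$,
$$G_{V_N}(u,v) - G_{V_N\setminus V_{\alpha N}}(u,v) = \E_u\big[\1_{\tau_{V_{\alpha N}} < \tau_{\partial V_N}} \cdot G_{V_N}(S_{\tau_{V_{\alpha N}}}, v)\big]\,,$$
by the strong Markov property applied at the first hitting time $\tau_{V_{\alpha N}}$ of $V_{\alpha N}$ — this is the standard decomposition of Green's functions on nested domains. Hence the quantity in parentheses in \eqref{random-walk-lemma-3-eq}, up to the factor $\frac14 |W|^{-2}$, equals
$$\sum_{u,v\in W}\E_u\big[\1_{\tau_{V_{\alpha N}} < \tau_{\partial V_N}}\, G_{V_N}(S_{\tau_{V_{\alpha N}}}, v)\big]\,.$$
So it suffices to bound this sum from below by $c\, |W|^2$ for a constant $c = c(\alpha,\beta) > 0$.

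The key steps, in order: (i) Restrict the sum over $u \in W$ to a positive fraction of vertices $u$ from which the walk reaches $V_{\alpha N}$ before $\partial V_N$ with probability bounded below — this is exactly the last assertion of Lemma~\ref{random-walk-lemma-1} (with $\beta$ there replaced by $(1+\beta)/2$, so $\partial V_{(1+\beta)N/2}$ plays the role of the outer boundary and $\partial V_{\alpha N}$ the inner target), giving a constant $c_3' = c_3'(\alpha,\beta) > 0$ with $\P_u(\tau_{V_{\alpha N}} < \tau_{\partial V_N}) \geq c_3'$ for every $u \in W$. (ii) On that event the walk lands at some point $z = S_{\tau_{V_{\alpha N}}} \in V_{\alpha N}$, and we lower-bound $G_{V_N}(z, v) \geq c_2$ uniformly for all $v$ in $W$ as well: indeed both $z \in V_{\alpha N} \subseteq V_{rN}$ and $v \in W = \partial V_{(1+\beta)N/2} \subseteq V_{rN}$ for $r = (1+\beta)/2 < 1$, so \eqref{random-walk-lemma-1-eq-2} of Lemma~\ref{random-walk-lemma-1} applies with this $r$ and yields $G_{V_N}(z,v) \geq c_2(\beta)$. (iii) Combining, each term with $u,v \in W$ is at least $c_3' c_2$, so the full double sum is at least $c_3' c_2 |W|^2$, and dividing by $\frac{1}{|W|^2}$ and including the factor $\frac14$ gives \eqref{random-walk-lemma-3-eq} with $c_{11} = \frac14 c_2 c_3'$.

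I do not expect a genuine obstacle here; the lemma is a routine consequence of Lemma~\ref{random-walk-lemma-1}. The only mild subtlety is bookkeeping: one must make sure the parameter $r$ used when invoking \eqref{random-walk-lemma-1-eq-2} (which needs $z, v \in V_{rN}$) is chosen so that both $\alpha < r$ and $(1+\beta)/2 \leq r < 1$ — e.g.\ $r = (1+\beta)/2$ works since $\alpha < \beta < (1+\beta)/2 < 1$ — and that the last statement of Lemma~\ref{random-walk-lemma-1}, about hitting an inner box before $\partial V_N$, is applied with outer radius $(1+\beta)/2$ rather than $\beta$; since $\alpha < (1+\beta)/2$, that application is legitimate. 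Everything else is a direct substitution of the two explicit bounds from Lemma~\ref{random-walk-lemma-1} into the Green's-function difference identity.
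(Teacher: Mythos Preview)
Your proposal is correct and follows essentially the same approach as the paper: both arguments express the Green's-function difference via the strong Markov property on the event $\{\tau_{V_{\alpha N}} < \tau_{\partial V_N}\}$ and then invoke the two estimates of Lemma~\ref{random-walk-lemma-1} (the hitting-probability lower bound $c_3$ and the uniform Green's-function lower bound $c_2$). The only cosmetic difference is that the paper applies strong Markov at the first return to $\partial V_{(1+\beta)N/2}$ after hitting $\partial V_{\alpha N}$, whereas you stop at the hitting point $z\in\partial V_{\alpha N}$ itself; your version is slightly more direct and yields the same constant $c_{11}=\tfrac14 c_2 c_3$.
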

\begin{proof}
		We consider two scenarios where in scenario (1) we kill the random walk upon hitting $\partial V_N$ and in scenario (2) we kill the random walk upon hitting $\partial V_{\alpha N} \cup \partial V_N$.  For any $u\in\partial V_{(1+\beta)N/2}$, we will compare the expected number of visits to $\partial V_{(1+\beta)N/2}$ of a simple random walk started at $u$ in these two scenarios. On the event $E$ that it hits $\partial V_N$ before $\partial V_{\alpha N}$, it will visit $\partial V_{(1+\beta)N/2}$ the same number of times in both scenarios (1) and (2). On the complement of $E$ (i.e. it hits $\partial V_{\alpha N}$ before $\partial V_N$), however, it will come back to some $w\in \partial V_{(1+\beta)N/2}$, and will then (conditionally in expectation) visit $\partial V_{(1+\beta)N/2}$ for exactly $\sum_{v\in \partial V_{(1+\beta)N/2}} G_{V_N}(w,v)$ more times in scenario (1) than in scenario (2). Since we have a uniform lower bound of $\sum_{v\in \partial V_{(1+\beta)N/2}} G_{V_N}(w,v)$ (by \eqref{random-walk-lemma-1-eq-2}) and that $\P(E^c)\geq c_3$ by Lemma~\ref{random-walk-lemma-1}, we see that 
		$$\sum_{v\in \partial V_{(1+\beta)N/2}} G_{V_N}(u,v)-\sum_{v\in \partial V_{(1+\beta)N/2}} G_{V_N\setminus V_{\alpha N}}(u,v)\geq c_3c_2|\partial V_{(1+\beta)N/2}|\,,$$ and \eqref{random-walk-lemma-3-eq} follows by summing this over all $u\in\partial V_{(1+\beta)N/2}$. \qed
\end{proof}
For a connected set $K\subseteq \tilde {\mathcal G}_N$, let $T_{K\cup\partial V_N}$ be the first time the Brownian motion $B^{\tilde {\mathcal G}_N}$ hits the closure of $K\cup\partial V_N$. It is clear that $B^{\tilde {\mathcal G}_N}_{T_{K\cup\partial V_N}}$ can only take  finitely many values $u$ in the closure of $K\cup\partial V_N$  regardless of the starting point of the Brownian motion. Thus, for any $v\in \tilde {\mathcal G}_N\setminus K$ and $u\in \tilde{\mathcal G}_N$ we can define 
	$$\widetilde {\mathrm{Hm}}(v, u; K\cup\partial V_N)\deq \P^v( B^{\tilde {\mathcal G}_N}_{T_{K\cup\partial V_N}} = u)$$ to be the harmonic measure of $B^{\tilde {\mathcal G}_N}$ at $u$ with respect to starting point $v$ and target set $K\cup\partial V_N$.	
\begin{proof}[Proof of Proposition~\ref{prop-continuous-loop-soup}]
	Our proof strategy is highly similar to that of Theorem~\ref{main-thm-annulus} (and thus the proof in the present case is presented with slightly less details). One difference in the present case is that our sign clusters are subgraphs on the metric graph $\tilde {\mathcal G}_N$ and thus the boundary points of our clusters are not necessarily lattice points.
Consequently, our definition of the analogue of \eqref{eq-mathcal-A-B-C} is slightly more complicated.  For $i\geq 1$ we define (note that $D_{\tilde {\eta}_{N}}(v, A) = 0$ if $v\in A$)
\begin{equation}\label{eq-mathcal-A-B-I}
\begin{split}
\tilde {\mathcal I}_i & = \{v\in \tilde {\mathcal G}_N : D_{\tilde {\eta}_{N}} (v,  \tilde {\mathcal G}_{\alpha N}) \leq i\}\,,\\
\tilde {\mathcal A}_i &= \{v\in \tilde {\mathcal G}_N : D_{\tilde {\eta}_{N}} (v, \tilde {\mathcal G}_{\alpha N}) = i\}\,,\\
\tilde {\mathcal B}_i &=  \partial \tilde {\mathcal I}_i \setminus \tilde {\mathcal A}_i\,.
\end{split}
\end{equation}
Here  $\partial \tilde {\mathcal I}_i$ is the collection of all points $v$ such that any neighborhood of $v$ in $\tilde {\mathcal G}_N$ has non-empty intersection with $\tilde {\mathcal I}_i$ but is not a subset of $\tilde {\mathcal I}_i$. We remark that $\tilde {\mathcal A}_i, \tilde {\mathcal B}_i, \tilde {\mathcal I}_i$ in \eqref{eq-mathcal-A-B-I} are analogues of $\mathcal A_i, \mathcal B_i, \mathcal I_i$ in \eqref{eq-mathcal-A-B-C} --- the tilde in the notation is to emphasize that  they are considered as subsets of the metric graph. We also note that for exposition convenience in \eqref{eq-mathcal-A-B-C} we have included $\mathcal B_i$ in $\mathcal I_i$, but in \eqref{eq-mathcal-A-B-I} $\tilde {\mathcal B_i}$ is on the boundary of $\tilde {\mathcal I}_i$ but is not a subset of  $\tilde {\mathcal I}_i$. 
 Note that any point in $\tilde {\mathcal A}_i$ is necessarily a lattice point, and that with probability 1 the GFF values at all lattice points  are non-zero (which we assume in what follows for clarify of exposition) and therefore $\tilde {\mathcal B}_i = \{v\in \partial \tilde {\mathcal I}_i: \tilde \eta_{N, v} = 0\}$ (from our definition, we see that the GFF values on $\tilde {\mathcal B}_i$ are necessarily zero). 
 One may consider an analogous exploration process construction as for \eqref{eq-mathcal-A-B-C}, but we omit the details. It is clear from the definition \eqref{eq-mathcal-A-B-I} that the following hold as long as $\tilde {\mathcal I}_i \cap \partial V_N = \emptyset$:
	\begin{itemize}
		\item  $\tilde {\mathcal I}_i$ is a connected set in $\tilde {\mathcal G}_N$. 
		\item  $\tilde {\mathcal A}_i$'s are disjoint from each other. In addition, $V_{\alpha N} \cup \cup_{j=1}^{i}\tilde {\mathcal A}_j$ is the set of all the lattice points in $\tilde {\mathcal I}_i$.
		\item  $\partial \tilde {\mathcal I}_i$ (the boundary points of $\tilde {\mathcal I}_i$) is a subset of $\tilde {\mathcal A}_i\cup \tilde {\mathcal B}_i$.
		\item For $\tilde {\mathcal C}_i\deq\{u:  \widetilde{\mathrm{Hm}}(\infty,u;\tilde {\mathcal I}_i)>0\}$ we have $\tilde {\mathcal C}_i\subseteq \partial \tilde {\mathcal I}_i \subseteq \tilde {\mathcal A}_i\cup\tilde {\mathcal B}_i$.
	\end{itemize}
	
	Suppose that the event $\mathcal E \deq \{D_{\tilde {\eta}_{N}}(\partial V_{\alpha N}, \partial V_{\beta N}) > N \mathrm{e}^{(\log N)^{\chi}}\}$ occurs, then $\tilde {\mathcal I}_i$ must be a subset of $\tilde {\mathcal G}_{\beta N}$ for all $0\leq i< N \mathrm{e}^{(\log N)^{\chi}}$. Further, let $\tau$ be the minimal number $i_0$ which satisfies
	$$|\tilde {\mathcal A}_{i_0}| \leq N \mathrm{e}^{-(\log N)^{\chi}}\,.$$
	Then since $\tilde {\mathcal A}_i$'s are disjoint from each other, we have $\tau< N \mathrm{e}^{(\log N)^{\chi}}$ on the event $\mathcal E$. Crucially, similar to \eqref{C_k-measurable}, we have that for all $k\geq 1$ and all $\{\tilde A_i, \tilde B_i, \tilde I_i: 1\leq i\leq k\}$
\begin{equation}\label{C_k-measurable-tilde}
	\{\tau = k\}\cap \{\tilde {\mathcal A}_i = \tilde A_i, \tilde {\mathcal B}_i = \tilde B_i, \tilde {\mathcal I}_i = \tilde I_i \mbox{ for } 0\leq i\leq k\}\in \mathcal F_{\tilde I_k}\,.
\end{equation}
In order to verify \eqref{C_k-measurable-tilde}, we can employ the exploration procedure similar to  the derivation for \eqref{C_k-measurable}. Since in Section~\ref{sec-random-walk-loop-soup} we will give a self-contained proof for a strictly stronger result than Theorem~\ref{thm-loop-soup-critical} (where we will also give a detailed description for the exploration procedure of slightly more complicated constructions), we omit the details on the verification of \eqref{C_k-measurable-tilde}.

Conditioned on $\mathcal F_\tau\deq \sigma(\{\tilde {\eta}_{N, v}:v\in \tilde {\mathcal I}_\tau\})$, by the strong Markov property in \cite[Section 3]{Lupu14}, $\{\tilde {\eta}_{N, v}:v\in \tilde {\mathcal G}_N\setminus \tilde {\mathcal I}_\tau\}$ is distributed as a mean zero GFF in $\tilde {\mathcal G}_N\setminus \tilde {\mathcal I}_\tau$ plus the harmonic extension of $\tilde {\eta}_{N, v}$ from $\tilde {\mathcal I}_\tau$ to $\tilde {\mathcal G}_N\setminus \tilde {\mathcal I}_\tau$. In particular, on the event $\mathcal E$ where $\tilde {\mathcal I}_\tau$ is contained in $\tilde {\mathcal G}_{\beta N}$, we have for each $v\in \partial V_{(1+\beta) N/2}$
	\begin{equation}\label{tilde-conditional-expectation}
		\E(\tilde\eta_{N, v} \mid \mathcal F_\tau) = \sum_{u\in \tilde {\mathcal I}_\tau} \widetilde {\mathrm{Hm}}(v, u; \tilde {\mathcal I}_\tau\cup\partial V_N) \cdot \tilde {\eta}_{N, u}=\sum_{u\in \tilde {\mathcal C}_\tau} \widetilde {\mathrm{Hm}}(v, u; \tilde {\mathcal I}_\tau\cup\partial V_N) \cdot \tilde {\eta}_{N, u}\,.
	\end{equation}
	Recall that by definition we have $\tilde {\eta}_{N, u}=0$ for all $u\in \tilde {\mathcal B}_\tau$. Thus
	\begin{equation}\label{tilde-Hm-Btau}
		\sum_{u\in \tilde {\mathcal B}_\tau} \widetilde {\mathrm{Hm}}(v, u; \tilde {\mathcal I}_\tau\cup\partial V_N) \cdot \tilde {\eta}_{N, u}=0\,.
	\end{equation}
	We want to show that $\widetilde {\mathrm{Hm}}(v, \tilde {\mathcal A}_\tau; \tilde {\mathcal I}_\tau\cup\partial V_N)$ is small. Let $\mathcal D_\tau= V_{\alpha N}\cup \cup_{j=1}^{\tau}\tilde {\mathcal A}_j$ be the set of all the lattice points in $\tilde {\mathcal I}_\tau$. Then $\mathcal D_\tau$ contains $\tilde {\mathcal A}_\tau$ and $V_{\alpha N}$ by definition, and it is a connected subgraph of $\mathcal G_{\beta N}$. Since $\mathcal D_\tau$ is a subset of  $\tilde {\mathcal I}_\tau$, and the restriction of (the Brownian motion) $B^{\tilde {\mathcal G}_N}$ on $V_N$ is the same as the simple random walk on $V_N$, we have
	\begin{equation}\label{tilde-Hm-Atau-1}
		\widetilde {\mathrm{Hm}}(v, \tilde {\mathcal A}_\tau; \tilde {\mathcal I}_\tau\cup\partial V_N) \leq \mathrm{Hm}(v, \tilde {\mathcal A}_\tau ; \mathcal D_\tau \cup \partial V_N)\,.
	\end{equation}
Furthermore, $|\tilde {\mathcal A}_\tau|\leq N \mathrm{e}^{-(\log N)^{\chi}}$. Therefore by Lemma~\ref{lem-Makarov}, we have
	\begin{equation}\label{tilde-Hm-Atau-2}
		\mathrm{Hm}(v,  \tilde {\mathcal A}_\tau; \mathcal D_\tau \cup \partial V_N)= o(\log N)^{-10}\,.
	\end{equation}
	
	Let $\Lambda_{\mathrm{bad}} = \{\sup_{u\in V_N} |\tilde {\eta}_{N, u}| \geq 100 \log N\}$ be as before. Then on the event $\mathcal E\setminus \Lambda_{\mathrm{bad}}$, combining \eqref{tilde-conditional-expectation}, \eqref{tilde-Hm-Btau}, \eqref{tilde-Hm-Atau-1} and \eqref{tilde-Hm-Atau-2} gives for each $v\in \partial V_{(1+\beta) N/2}$ that
	$$\E(\tilde\eta_{N, v} \mid \mathcal F_\tau)=o(\log N)^{-8}\,.$$
	Therefore (recall the definition of $X$ in \eqref{eq-define-X}),
	$$|\E(X \mid \mathcal F_\tau)|=o(\log N)^{-8}<\epsilon$$
	for some fixed $\epsilon>0$ and sufficiently large $N$.
	Now
	$$\var X = \frac 1 4\frac{1}{|\partial V_{(1+\beta)N/2}|^2}\sum_{u,v\in \partial V_{(1+\beta)N/2}} G_{V_N}(u,v)$$
	and on the event $\mathcal E$,
	\begin{align*}
		\var( X \mid \mathcal F_\tau) &=  \frac{1}{|\partial V_{(1+\beta)N/2}|^2}\sum_{u,v\in \partial V_{(1+\beta)N/2}} G_{\tilde {\mathcal G}_N\setminus \tilde {\mathcal I}_\tau}(u,v)\\
		&\leq \frac 1 4 \frac{1}{|\partial V_{(1+\beta)N/2}|^2}\sum_{u,v\in \partial V_{(1+\beta)N/2}} G_{V_N\setminus V_{\alpha N}}(u,v)\,.
	\end{align*}
	
		By Lemma~\ref{random-walk-lemma-3} we have $\var X-\var( X \mid \mathcal F_\tau)\geq c_{11}$ on the event $\mathcal E$. Also, recall that $\var X\leq c_4$ by \eqref{Var-X}.
	Now let $t=\epsilon+s {\sqrt{\var X-c_{11}}}$ where $s>0$ is a constant to be chosen later. Since given $\mathcal F_\tau$, $X$ is Gaussian, we have on the event $\mathcal E\setminus \Lambda_{\mathrm{bad}}$
	$$\P(X\leq t \mid \mathcal F_\tau) \geq \P(Z(\epsilon,\var X-c_{11})\leq t)=\P(Z\leq s)\,,$$
	where $Z(\epsilon, \var X - c_{11})$ is a Gaussian variable with mean $\epsilon$ and variance $\var X - c_{11}$, and $Z$ is a standard Gaussian variable. 
	In addition, we have
	$$\P(X \leq t)=\P(Z\leq \frac {\epsilon+s {\sqrt{\var X-c_{11}}}}{\sqrt{\var X}})\,.$$
	Since $c_{11}\leq\var X\leq c_4$, we have $0\leq \frac {\sqrt{\var X-c_{11}}}{\sqrt{\var X}}\leq \frac {\sqrt{c_4-c_{11}}}{\sqrt{c_4}}<1$, and thus
	$$\frac {\epsilon+s {\sqrt{\var X-c_{11}}}}{\sqrt{\var X}}\leq \frac {\epsilon}{\sqrt {c_{11}}}+s\frac {\sqrt{c_4-c_{11}}}{\sqrt{c_4}}\leq s(\frac {\frac {\sqrt{c_4-c_{11}}}{\sqrt{c_4}}+1}2)$$
	for a sufficiently large constant $s>0$.
	Therefore (recalling $\frac {\sqrt{c_4-c_{11}}}{\sqrt{c_4}}<1$)
	$$\P(\mathcal E\setminus \Lambda_{\mathrm{bad}})\leq \frac {\P(Z\leq \frac {\epsilon+s {\sqrt{\var X-c_{11}}}}{\sqrt{\var X}})}{\P(Z\leq s)}\leq \frac {\P(Z\leq s(\frac {\frac {\sqrt{c_4-c_{11}}}{\sqrt{c_4}}+1}2))}{\P(Z\leq s)}<1\,.$$
	Combined with \eqref{eq-bad-event}, this completes the proof of the proposition.\qed
\end{proof}

Finally, we remark that by an almost identical proof of Proposition~\ref{prop-continuous-loop-soup}, we can prove the next result.
\begin{cor}\label{cor-main-thm}
	For all $0<\alpha<\beta < 1$, $\chi>1/2$ and $\lambda>0$, there exists a constant $c>0$ such that
	$$\P(D_{N, -\lambda}(\partial V_{\alpha N}, \partial V_{\beta N}) \leq N \mathrm{e}^{(\log N)^{\chi}}) \geq c \mbox{ for all } N\geq 1\,.$$
\end{cor}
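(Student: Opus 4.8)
The plan is to re-run the lattice exploration argument from the proof of Theorem~\ref{main-thm-annulus}, but to extract a \emph{lower} bound on $\P\big(D_{N,-\lambda}(\partial V_{\alpha N},\partial V_{\beta N})\leq N\mathrm{e}^{(\log N)^{\chi}}\big)$ by the second‑moment comparison used in the proof of Proposition~\ref{prop-continuous-loop-soup}. Fix $\lambda>0$ and $\chi>1/2$. First I would carry out the exploration process of \eqref{eq-mathcal-A-B-C} verbatim, with the level set $\mathcal H_{N,-\lambda}=\{v:\eta_{N,v}\leq-\lambda\}$ in place of $\mathcal H_{N,\lambda}$: from $\mathcal A_0=V_{\alpha N}\cap\mathcal H_{N,-\lambda}$, at stage $i+1$ one explores the unexplored neighbours of $\mathcal A_i$, sending the $(-\lambda)$-open ones (those with $\eta_{N,\cdot}\leq-\lambda$) to $\mathcal A_{i+1}$ and the $(-\lambda)$-closed ones (those with $\eta_{N,\cdot}>-\lambda$) to $\mathcal B_{i+1}$, and setting $\mathcal I_{i+1}=\mathcal I_i\cup\mathcal A_{i+1}\cup\mathcal B_{i+1}$. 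On the event $\mathcal E=\{D_{N,-\lambda}(\partial V_{\alpha N},\partial V_{\beta N})>N\mathrm{e}^{(\log N)^{\chi}}\}$ the sets $\mathcal I_i$ stay in $V_{\beta N}$ for all $i<N\mathrm{e}^{(\log N)^{\chi}}$, and a volume count produces a first time $\tau<N\mathrm{e}^{(\log N)^{\chi}}$ with $|\mathcal A_\tau|\leq N\mathrm{e}^{-(\log N)^{\chi}}$; the contour $\mathcal C_\tau=\{u:\mathrm{Hm}(\infty,u;\mathcal I_\tau)>0\}$ lies in $\mathcal A_\tau\cup\mathcal B_\tau$, and, writing $\Pi_k$ for a generic piece $\{\tau=k\}\cap\{\mathcal A_i=A_i,\mathcal B_i=B_i,\ i\leq k\}$ and $I_k=\bigcup_{i\leq k}A_i\cup B_k\cup V_{\alpha N}$, one gets $\Pi_k\in\mathcal F_{I_k}$ exactly as in \eqref{C_k-measurable}. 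Thus $\mathcal E$ lies in a disjoint union of such pieces $\Pi_k$, over configurations with $I_k$ connected in $V_{\beta N}$, $\{u:\mathrm{Hm}(\infty,u;I_k)>0\}\subseteq A_k\cup B_k$, and $|A_k|\leq N\mathrm{e}^{-(\log N)^{\chi}}$.

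Second, on a fixed such piece I would condition on $\mathcal F_{I_k}$ and work with the observable $X$ of \eqref{eq-define-X}. By the Markov field property, for $v\in\partial V_{(1+\beta)N/2}$ one has $\E(\eta_{N,v}\mid\mathcal F_{I_k})=\sum_{u\in\mathcal C_k}\mathrm{Hm}(v,u;I_k\cup\partial V_N)\,\eta_{N,u}$, which I split over $\mathcal C_k\cap A_k$ and $\mathcal C_k\cap B_k$. Since $I_k$ is connected, contained in $V_{\beta N}$, and contains $V_{\alpha N}$ (so has diameter at least $\alpha N$), Lemma~\ref{lem-Makarov} with $\mathsf C=I_k$ and $\mathsf A=A_k$ gives $\mathrm{Hm}(v,A_k;I_k\cup\partial V_N)=o((\log N)^{-10})$; hence on the $\mathcal F_{I_k}$-measurable event $\Pi_k\setminus\Lambda_{\mathrm{bad}}^{(k)}$, where $\Lambda_{\mathrm{bad}}^{(k)}=\{\sup_{u\in I_k}|\eta_{N,u}|\geq100\log N\}\subseteq\Lambda_{\mathrm{bad}}$, the $A_k$-part is $o((\log N)^{-9})$ in absolute value, while the $B_k$-part is at least $-\lambda$ because $\eta_{N,u}>-\lambda$ for $u\in B_k$ and harmonic measures are at most $1$. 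Averaging over $v$ gives $\E(X\mid\mathcal F_{I_k})\geq-2\lambda$ on $\Pi_k\setminus\Lambda_{\mathrm{bad}}^{(k)}$ for $N$ large. For the variance, $\var(X\mid\mathcal F_{I_k})=\frac{1}{4}|\partial V_{(1+\beta)N/2}|^{-2}\sum_{u,v\in\partial V_{(1+\beta)N/2}}G_{V_N\setminus I_k}(u,v)\leq\frac{1}{4}|\partial V_{(1+\beta)N/2}|^{-2}\sum_{u,v}G_{V_N\setminus V_{\alpha N}}(u,v)$ because $V_{\alpha N}\subseteq I_k$, so Lemma~\ref{random-walk-lemma-3} yields $\var(X\mid\mathcal F_{I_k})\leq\var X-c_{11}=:\sigma_\star^2$; also $c_{11}\leq\var X\leq c_4$ by Lemma~\ref{random-walk-lemma-3} and \eqref{Var-X}.

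Finally I would run the Gaussian comparison, but against an \emph{upper}-tail event. Fix a large constant $s=s(\lambda,\alpha,\beta)$ and set $G=\{X\geq-2\lambda-s\sigma_\star\}$. Conditionally on $\mathcal F_{I_k}$ on $\Pi_k\setminus\Lambda_{\mathrm{bad}}^{(k)}$, $X$ is Gaussian with mean at least $-2\lambda$ and variance at most $\sigma_\star^2$, so the standardized threshold for $G$ is at most $-s\sigma_\star/\sqrt{\var(X\mid\mathcal F_{I_k})}\leq-s$, whence $\P(G\mid\mathcal F_{I_k})\geq\P(Z\leq s)$ with $Z$ a standard Gaussian; unconditionally $X\sim N(0,\var X)$ and, using $\var X\in[c_{11},c_4]$, the threshold $(2\lambda+s\sigma_\star)/\sqrt{\var X}$ is at most $2\lambda/\sqrt{c_{11}}+s\sqrt{1-c_{11}/c_4}\leq s\theta$ with $\theta=\frac{1}{2}\big(1+\sqrt{1-c_{11}/c_4}\big)<1$, provided $s$ was taken large enough, so $\P(G)\leq\P(Z\leq s\theta)$. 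Since the events $\Pi_k\setminus\Lambda_{\mathrm{bad}}^{(k)}$ are disjoint and $\mathcal F_{I_k}$-measurable, summing $\P(G)\geq\E\big(\P(G\mid\mathcal F_{I_k})\mathbf{1}_{\Pi_k\setminus\Lambda_{\mathrm{bad}}^{(k)}}\big)\geq\P(Z\leq s)\,\P(\Pi_k\setminus\Lambda_{\mathrm{bad}}^{(k)})$ over all pieces, and using $\Lambda_{\mathrm{bad}}^{(k)}\subseteq\Lambda_{\mathrm{bad}}$ together with \eqref{eq-bad-event}, gives $\P(Z\leq s\theta)\geq\P(Z\leq s)\big(\P(\mathcal E)-N^{-20}\big)$, i.e.\ $\P(\mathcal E)\leq\P(Z\leq s\theta)/\P(Z\leq s)+N^{-20}$. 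As $\theta<1$ this ratio is a fixed constant strictly below $1$, so $\P(\mathcal E^c)\geq c$ for some $c=c(\alpha,\beta,\chi,\lambda)>0$ and all large $N$, the finitely many small $N$ being absorbed by shrinking $c$. The only genuine point of departure from Proposition~\ref{prop-continuous-loop-soup} — and the main thing to get right — is that the boundary set $B_k$ now only pushes the conditional mean of $X$ \emph{down} by at most $\lambda$, instead of pinning it to $0$ as the zero set does on the metric graph; correspondingly the comparison event $G$ must sit in the \emph{upper} tail, and this is also where $c$ picks up its dependence on $\lambda$. Beyond that I expect only the bookkeeping to require care, namely keeping the conditional-mean bound attached to the $\mathcal F_{I_k}$-measurable event $\Pi_k\setminus\Lambda_{\mathrm{bad}}^{(k)}$ throughout the summation.
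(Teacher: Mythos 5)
Your proof is correct, and it achieves the corollary by a route that differs in a small but genuine way from what the paper intends. The paper's one-line remark points to ``an almost identical proof of Proposition~\ref{prop-continuous-loop-soup}'', which would run the exploration on the \emph{metric graph} $\tilde{\mathcal G}_N$ with the open set $\{\tilde\eta_{N,\cdot}<-\lambda\}$: then the boundary set $\tilde{\mathcal B}_\tau$ sits exactly at $\tilde\eta=-\lambda$, so $|\E(X\mid\mathcal F_\tau)|\leq\lambda+o(1)$ is a two-sided bound and the two-sided Gaussian comparison of Proposition~\ref{prop-continuous-loop-soup} goes through verbatim (followed by the trivial conversion from the cable-system length to the lattice chemical distance). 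You instead run the lattice exploration of Theorem~\ref{main-thm-annulus} at level $-\lambda$; the closed vertices then only satisfy $\eta_{N,\cdot}>-\lambda$ rather than being pinned, so the $B_k$-contribution to $\E(X\mid\mathcal F_{I_k})$ is bounded only from below (by $-\lambda\cdot\mathrm{Hm}(v,B_k;I_k\cup\partial V_N)\geq-\lambda$, using that the harmonic measure has total mass at most $1$), and you correctly compensate by choosing the test event in the upper tail, $\{X\geq-2\lambda-s\sigma_\star\}$. Everything else — $A_k$ controlled through Lemma~\ref{lem-Makarov}, the variance drop $\var(X\mid\mathcal F_{I_k})\leq\var X-c_{11}$ via monotonicity of the Green function and Lemma~\ref{random-walk-lemma-3}, the $\mathcal F_{I_k}$-measurable bad event $\Lambda_{\mathrm{bad}}^{(k)}$, and the Gaussian-versus-mixture comparison producing $\theta<1$ with $s=s(\lambda)$ large — is the same mechanism used in Sections~2--3. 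The upshot is that your lattice argument avoids passing through the metric graph at the price of a one-sided conditional-mean estimate, while the paper's intended argument keeps the clean two-sided estimate at the price of moving to $\tilde{\mathcal G}_N$ and back; both are valid and yield a constant $c$ depending on $\lambda$, as the statement requires. The only small wording slip is ``harmonic measures are at most $1$'' where you mean their sum over $B_k$ is at most $1$; and, as you note, the finitely many small $N$ are handled since $\partial V_{\alpha N}$ and $\partial V_{\beta N}$ are connected inside $\{\eta_{N,\cdot}\leq-\lambda\}$ with positive probability for each fixed $N$.
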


\section{Percolation of the random walk loop soup}\label{sec-random-walk-loop-soup}
This section is devoted to the proof of Theorem \ref{thm-random-walk-loop-soup-critical}, into which the three main proof ingredients (as discussed in Section~\ref{sec:proof-ingredients}) merge.
Recall that as stated in \eqref{eq-isomorphism}, the occupation time field $\{\hat {\mathcal L}^v_{1/2}\}_{v\in V_N}$ of $\mathcal L_{1/2, N}$ has the same law as $\{\frac{1}{2} \eta_{N, v}^2\}_{v\in V_N}$; and as stated around \eqref{eq-def-random-current}, conditioned on $\{\hat {\mathcal L}^v_{1/2}=\ell_v\}_{v\in V_N}$, the graph of $\mathcal L_{1/2, N}$ has the same law as $(1_{n_e>0})_{e\in E_N}$, where $(n_e)_{e\in E_N}$ follows the random current model (see \eqref{eq-def-random-current}) with parameters $\beta_e=2\sqrt{\ell_x\ell_y}$ on edge $e=(x,y)$.

Recall that each edge of ${\mathcal G}_N=(V_N,E_N)$ has conductance $1$. In this section we will consider a graph ${\mathcal G}'_N=(V_N, E_N(1)\cup E_N(2))$, where we replace each edge $e\in  E_N$ in the graph ${\mathcal G}_N$ by two multiple edges $e(1)$ and $e(2)$ and assign conductance $1/8$ to $e(1)$ and conductance $7/8$ to $e(2)$, and we denote $E_N(1)=\{e(1):e\in E_N\}$ and $E_N(2)=\{e(2):e\in E_N\}$. The graph ${\mathcal G}'_N$ is equivalent to ${\mathcal G}_N$ in the sense that the Gaussian free fields on ${\mathcal G}_N$ and ${\mathcal G}'_N$ have the same law.

As in \cite{Lupu14}, we will consider the Gaussian free field on the metric graph ${\tilde {\mathcal G}}'_N$ of ${\mathcal G}'_N$. The metric graph ${\tilde {\mathcal G}}'_N$ can be obtained from ${\mathcal G}'_N$ by assigning each edge $e(1)\in E_N(1)$ length $4$ and each edge $e(2)\in E_N(2)$ length $4/7$. Let $B^{{\tilde {\mathcal G}}'_N}$ be a standard Brownian motion on ${\tilde {\mathcal G}}'_N$, let $G_{{\tilde {\mathcal G}}'_N}(u,v)$ be the Green's function of $B^{{\tilde {\mathcal G}}'_N}$, and let $\{{\tilde {\eta}}'_{N, v}:v\in {\tilde {\mathcal G}}'_N\}$ be a continuous realization of the Gaussian free field on ${\tilde {\mathcal G}}'_N$ with covariances given by $G_{{\tilde {\mathcal G}}'_N}(u,v)$. The restriction of $\{{\tilde {\eta}}'_{N, v}:v\in {\tilde {\mathcal G}}'_N\}$ to $V_N$ is the same as the Gaussian free field $\{ {\eta}_{N, v}:v\in V_N\}$. Moreover, $\{{\tilde {\eta}}'_{N, v}:v\in {\tilde {\mathcal G}}'_N\}$ can be obtained from $\{ {\eta}_{N, v}:v\in V_N\}$ by, for each edge $e' = (x, y)\in E_N(1)\cup E_N(2)$, independently sampling a variance 2 Brownian bridge of the same length as $e'$ with values $\eta_{N, x}$ and $\eta_{N, y}$ at the endpoints.

We now describe a coupling between the random walk loop soup cluster $\mathcal L_{1/2, N}$ and a graph $\mathcal O$ obtained from $\{{\tilde {\eta}}'_{N, v}:v\in {\tilde {\mathcal G}}'_N\}$. Fix some $\lambda>0$. We say an edge $e\in E_N$ is \emph{open} if ${\tilde {\eta}}'_{N, v}>\lambda$ for all $v\in e(1)$ (which we denote as ${\tilde{\eta}}'_{N, e(1)}>\lambda$ for notation convenience) or  ${\tilde {\eta}}'_{N, v}<-\lambda$ for all $v\in e(1)$. Let $\mathcal O$ be the graph (seen as a subgraph of $\mathcal G_N = (V_N, E_N)$) induced by these open edges.
\begin{lemma}\label{coupling}
	For any $\lambda\geq 2$, the graph $\mathcal O$ is stochastically dominated by  $\mathcal L_{1/2, N}$. That is to say, we have $(1_{{\tilde{\eta}}'_{N, e(1)}>\lambda \text { or } {\tilde{\eta}}'_{N, e(1)}<-\lambda})_{e\in E_N}$ is stochastically dominated by $(1_{n_e>0})_{e\in E_N}$.
\end{lemma}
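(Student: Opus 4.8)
The plan is to prove the stochastic domination edge-by-edge via a standard Holley/FKG-type criterion, after first reducing the claim about the metric-graph GFF to a statement purely on the discrete graph $\mathcal G_N'$. First I would observe that for a fixed edge $e=(x,y)\in E_N$, the event $\{\tilde\eta'_{N,e(1)}>\lambda\}\cup\{\tilde\eta'_{N,e(1)}<-\lambda\}$ depends, conditionally on $(\eta_{N,x},\eta_{N,y})$, only on the variance-$2$ Brownian bridge of length $4$ running along $e(1)$; these bridges are independent across edges. So I would integrate out the bridges: conditionally on the discrete field $\{\eta_{N,v}\}_{v\in V_N}$, the indicators $1_{e\text{ open}}$ are independent, with $\P(e\text{ open}\mid \eta_{N,x}=a,\eta_{N,y}=b)$ equal to an explicit function $q_e(a,b)$ — the probability that a length-$4$, variance-$2$ Brownian bridge from $a$ to $b$ stays above $\lambda$, plus the same with signs reversed. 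The key elementary fact I would need is that a Brownian bridge from $a$ to $b$ stays above $\lambda$ with probability $\exp\!\big(-2(a-\lambda)_+(b-\lambda)_+/(2\cdot 4)\big)$ when $a,b>\lambda$ (and zero otherwise), i.e. the classical bridge-crossing formula; with length $4$ and variance $2$ this gives $q_e(a,b)=\exp(-(a-\lambda)_+(b-\lambda)_+/4)+\exp(-(a+\lambda)_-(b+\lambda)_-/4)$ where $(t)_- = (-t)_+$. Writing $\ell_x=\tfrac12\eta_{N,x}^2$, on the sign-agreement event one has $\sqrt{\ell_x\ell_y}=\tfrac12|\eta_{N,x}\eta_{N,y}|$, so the factor $\exp(-|\eta_{N,x}\eta_{N,y}|/4)$ that would appear without the $\lambda$-truncation is exactly $\exp(-\beta_e/8)$, which I would want to compare against the random current marginal.

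Next I would recall, from the random current representation invoked before the lemma, that conditionally on $\{\hat{\mathcal L}^v_{1/2}=\ell_v\}$ the law of $(1_{n_e>0})_e$ is that of the edge-set of $\mathcal L_{1/2,N}$, and that a random current with parameters $(\beta_e)$ has $\P(n_e>0\mid \text{rest})\ge 1-e^{-\beta_e}$ for each edge (since dropping the parity constraint only helps, and a Poisson$(\beta_e)$ variable is positive with probability $1-e^{-\beta_e}$; more carefully one uses the switching lemma / the fact that conditioning on the other currents preserves a product-of-Poisson structure on the remaining even subgraphs, giving a conditional probability at least $1-e^{-\beta_e}$, or even just uses that the marginal of a random current dominates independent percolation with parameter $1-e^{-\beta_e}$ — this is a known consequence of the Griffiths/GKS-type inequalities for currents). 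Then I would combine the two conditional pictures under the isomorphism $\{\hat{\mathcal L}^v_{1/2}\}\stackrel{d}{=}\{\tfrac12\eta_{N,v}^2\}$: conditionally on the common field $(\ell_v)=(\tfrac12\eta_{N,v}^2)$, the loop-soup edge set stochastically dominates independent percolation with edge-probabilities $1-e^{-\beta_e}=1-e^{-2\sqrt{\ell_x\ell_y}}$, while $\mathcal O$ is exactly independent percolation with edge-probabilities $q_e\le \exp(-(\,|\eta_{N,x}|-\lambda)_+(\,|\eta_{N,y}|-\lambda)_+/4)\le \exp(-0)=1$; the point is to check $q_e\le 1-e^{-2\sqrt{\ell_x\ell_y}}$ pointwise in $(\eta_{N,x},\eta_{N,y})$, after which domination conditionally on the field, hence unconditionally, is immediate.

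So the crux reduces to the pointwise inequality: for all real $a,b$ with $\lambda\ge 2$,
\[
e^{-(|a|-\lambda)_+(|b|-\lambda)_+/4}\;\le\;1-e^{-|ab|/2}.
\]
The hard part will be establishing exactly this inequality with the stated constant $\lambda\ge 2$ (the $1/8$–$7/8$ splitting of the conductance, i.e. the length-$4$ choice for $e(1)$, is precisely tuned so that the exponent $1/4$ on the left beats $1/2$ on the right by enough once $\lambda\ge 2$). I would handle it by cases: if $|a|\le\lambda$ or $|b|\le\lambda$ the left side is $1$... no — rather the left side is $e^0=1$ only when one factor vanishes, so I must instead note $(|a|-\lambda)_+(|b|-\lambda)_+=0$ forces the left side to be $1$, which is false; so the correct reading is that $\mathcal O$ only ever opens an edge when \emph{both} $|a|,|b|>\lambda$, and on that region I need $e^{-(|a|-\lambda)(|b|-\lambda)/4}\le 1-e^{-|ab|/2}$ with $|a|,|b|>\lambda\ge 2$. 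There, writing $|a|=\lambda+s$, $|b|=\lambda+t$ with $s,t>0$, the left side is $e^{-st/4}$ and $|ab|=(\lambda+s)(\lambda+t)\ge\lambda^2+\lambda(s+t)+st$, so $1-e^{-|ab|/2}\ge 1-e^{-(\lambda^2+\lambda(s+t)+st)/2}$, and one checks $e^{-st/4}\le 1-e^{-(4+2(s+t)+st)/2}$ for $s,t>0$ by elementary monotonicity (worst case $s,t\to 0$, where left $\to 1$ and right $\to 1-e^{-2}<1$ — so this naive bound is \emph{not} enough and the real argument must use the bridge formula more sharply, e.g. that $q_e$ genuinely vanishes as $\min(|a|,|b|)\downarrow\lambda$ rather than tending to $1$). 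Pinning down this quantitative comparison — getting the right non-asymptotic bound on the bridge-staying-positive probability and matching it to $1-e^{-\beta_e}$ with room to spare at $\lambda=2$ — is the one genuinely technical step; everything else is bookkeeping with the isomorphism theorem, the random-current marginal bound, and Strassen's theorem for the conditional stochastic domination.
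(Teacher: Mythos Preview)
Your overall strategy --- condition on the discrete field/occupation times, integrate out the bridges on the $e(1)$ edges, compare to an independent Bernoulli lower bound for the random current edge-set --- is exactly the paper's approach. But two concrete errors derail your execution.

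First, the bridge-crossing formula is written with the wrong sign. The probability that a variance-$2$, length-$4$ bridge from $a$ to $b$ \emph{stays above} $\lambda$ (for $a,b>\lambda$) is $1-\exp\big(-(a-\lambda)(b-\lambda)/4\big)$, not $\exp(\ldots)$; the exponential is the probability it \emph{hits} $\lambda$. This reversal is precisely why your boundary check at $s,t\to 0$ appears to fail: the true open probability $q_e=1-e^{-st/4}\to 0$ there, not $\to 1$, so the inequality you need is perfectly fine in that limit. The ``hard quantitative step'' you flag at the end is an artifact of this sign error.

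Second, your claimed lower bound for the random-current side --- that $(1_{n_e>0})_e$ dominates independent Bernoulli percolation with parameter $1-e^{-\beta_e}$ --- is not a known fact and is in fact false: on a single edge with even-parity constraint one has $\P(n_e>0)=1-1/\cosh\beta_e<1-e^{-\beta_e}$. The paper handles this by conditioning further on the \emph{parities} of $(n_e)$: given the parities, the $n_e$ are independent with $\P(n_e>0\mid n_e\text{ odd})=1$ and $\P(n_e>0\mid n_e\text{ even})=1-1/\cosh\beta_e$, so the edge-set dominates independent Bernoulli with parameter $1-1/\cosh\beta_e$ regardless of the parity configuration.

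With both corrections in place, the pointwise inequality to verify on $\{|a|,|b|>\lambda\}$ is $1/\cosh(|ab|)\le e^{-(|a|-\lambda)(|b|-\lambda)/4}$, which follows in two lines from $1/\cosh t\le 2e^{-t}$, the trivial bound $(|a|-\lambda)(|b|-\lambda)\le |ab|$, and $|ab|>\lambda^2\ge 4$ (so $2e^{-|ab|}\le e^{-|ab|/2}\le e^{-|ab|/4}$). This is exactly the paper's chain of inequalities.
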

\begin{proof}
	Since $\{\frac 1 2 (\tilde {\eta}'_{N, v})^2\}_{v\in V_N}$ and $\{\hat {\mathcal L}^v_{1/2}\}_{v\in V_N}$ both have the same law as $\{\frac{1}{2} \eta_{N, v}^2\}_{v\in V_N}$, we only need to show the stochastic dominance of $\mathcal O$ when conditioned on the former, by $\mathcal L_{1/2, N}$ when conditioned on the latter (with the same realization).
	
	On one hand, conditioned on $\{\tilde {\eta}'_{N,v}\}_{v\in V_N}$ we see that  $1_{\tilde {\eta}'_{N, e(1)}>\lambda \text { or } \tilde {\eta}'_{N, e(1)}<-\lambda}$'s are independent Bernoulli variables with mean $p_e$'s, where (below $(B_t)$ is a Brownian motion with variance 2 at time 1)
	\begin{align*}
		p_e=
		\begin{cases}
			\P(B_t>\lambda, \forall t\in[0,4]\mid B_0=\tilde {\eta}'_{N, x}, B_4=\tilde {\eta}'_{N, y}), &\text{ if } \tilde {\eta}'_{N, x},\tilde {\eta}'_{N, y}>\lambda\,,\\
			\P(B_t<-\lambda, \forall t\in[0,4]\mid B_0=\tilde {\eta}'_{N, x}, B_4=\tilde {\eta}'_{N, y}),&\text{ if } \tilde {\eta}'_{N, x},\tilde {\eta}'_{N, y}<-\lambda\,,\\
			0,&\text{ otherwise.}
		\end{cases}
	\end{align*}
	In the case when $\tilde {\eta}'_{N, x},\tilde {\eta}'_{N, y}>\lambda$ (the other case is essentially the same), by the reflection principle we get that
	$$1-p_e=\mathrm{e}^{-\frac {(\tilde {\eta}'_{N, x}+\tilde {\eta}'_{N, y}-2\lambda)^2}{16}}/\mathrm{e}^{-\frac {(\tilde {\eta}'_{N, x}-\tilde {\eta}'_{N, y})^2}{16}}=\mathrm{e}^{-\frac 1 4(\tilde {\eta}'_{N, x}-\lambda)(\tilde {\eta}'_{N, y}-\lambda)}\,.$$
	Therefore, conditioned on $\{\frac 1 2 (\tilde {\eta}'_{N, v})^2=\ell_v\}_{v\in V_N}$, we have $(1_{\tilde {\eta}'_{N, e(1)}>\lambda \text { or } \tilde {\eta}'_{N, e(1)}<-\lambda})_{e\in E_N}$ is stochastically dominated by independent Bernoulli's with mean $p_e'$'s, where
	\begin{align*}
		p_e'=
		\begin{cases}
			1-\mathrm{e}^{-\frac 1 4(\sqrt{2\ell_x}-\lambda)(\sqrt{2 \ell_y}-\lambda)}, & \text{ if } \ell_x, \ell_y> \frac {\lambda^2}2\,,\\
			0, & \text{ otherwise.}
		\end{cases}
	\end{align*}
	
	On the other hand, conditioned on $\{\hat {\mathcal L}^v_{1/2}=\ell_v\}_{v\in V_N}$, the graph of $\mathcal L_{1/2, N}$ has the same law as $(1_{n_e>0})_{e\in E_N}$, where $(n_e)_{e\in E_N}$ follows the random current model (as in \eqref{eq-def-random-current}) with parameters $\beta_e= 2\sqrt{\ell_x\ell_y}$ on edge $e=(x,y)$. Note that if we further condition on the parities of $(n_e)_{e\in E_N}$, then $n(e)$'s are independent with distribution $F_{1,\beta_e}$ if $n(e)$ is odd and distribution $F_{2,\beta_e}$ if $n(e)$ is even. Here $F_{1,\beta_e}$ and $F_{2,\beta_e}$ are both probability distributions on nonnegative integers such that 
	$$F_{1,\beta_e}(n)=\frac {(\beta_e)^n}{n!\sinh {\beta_e}}\mbox{ for }n=1,3,5,\ldots \mbox{ and } F_{2,\beta_e}(n)=\frac {(\beta_e)^n}{n!\cosh {\beta_e}} \mbox{ for } n=0,2,4,\ldots$$
	This implies that, conditioned on $\{\hat {\mathcal L}^v_{1/2}=\ell_v\}_{v\in V_N}$ and the parities of $(n_e)_{e\in E_N}$, $1_{n_e>0}$'s are independent Bernoulli variables with mean $p_e''$'s, where $p_e''=1$ if $n(e)$ is odd and $p_e''=1-1/\cosh \beta_e$ if $n(e)$ is even. Furthermore, for $\ell_x,\ell_y>\frac {\lambda^2}2$ and $\lambda \geq 2$, we have
	\begin{align*}
		1/\cosh \beta_e\leq 2/\mathrm{e}^{\beta_e}&=2/\mathrm{e}^{2\sqrt{\ell_x \ell_y}}\\
		&\leq \mathrm{e}^{- \sqrt{\ell_x \ell_y}}\\
		&\leq \mathrm{e}^{-\frac 1 4(\sqrt{2\ell_x}-\lambda)(\sqrt{2\ell_y}-\lambda)}=1-p_e'\,.
	\end{align*}
	Therefore, conditioned on $\{\hat {\mathcal L}^v_{1/2}=\ell_v\}_{v\in V_N}$, we have $(1_{n_e>0})_{e\in E_N}$ stochastically dominates independent Bernoulli's with mean $p_e'$'s.
	
	Combining the above two parts completes the proof of the lemma.\qed
\end{proof}

\begin{remark}
It is worth pointing out that for our proof strategy to go through, it suffices as long as the law of the edge visits conditioned on vertex local times dominates the random current model as in \eqref{eq-def-random-current} for $\beta_e \geq c \sqrt{\ell_x \ell_y}$ for some fixed positive constant $c$ (since we can tune the resistance on $e(1)$ and $e(2)$). The fact that $c=2$ is of no importance to us. This flexibility may be useful when attempting to extend our proof strategy to some other contexts. 
\end{remark}

In light of Lemma ~\ref{coupling}, define (for $>\lambda$, the definition for $<-\lambda$ is similar)
$$ D_{\tilde {\eta}'_N,E_N(1), >\lambda} (u, v) = \min_{\gamma} |\gamma|\,,$$
where the minimum is over all path $\gamma\subseteq V_N\cup E_N(1)\subseteq \tilde {\mathcal G}'_N$ joining $u$ and $v$ such that $\tilde {\eta}'_{N, x}>\lambda$ for all $x\in\gamma$. In order to prove Theorem \ref{thm-random-walk-loop-soup-critical}, it suffices to prove the following proposition.
\begin{prop}\label{prop-key-statement}
	For any $0<\alpha<\beta < 1$, there exists a constant $c>0$ such that for all $N$
	$$\P(\min \{D_{\tilde {\eta}'_N,E_N(1), >\lambda}(\partial V_{\alpha N}, \partial V_{\beta N}), D_{\tilde {\eta}'_N,E_N(1), <-\lambda}(\partial V_{\alpha N}, \partial V_{\beta N})\}\leq N \mathrm{e}^{(\log N)^{\chi}}) \geq c\,.$$
\end{prop}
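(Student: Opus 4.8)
The plan is to replicate the proof of Proposition~\ref{prop-continuous-loop-soup} almost line by line, with the sign (zero-level) clusters on $\tilde{\mathcal G}_N$ replaced by ``$>\lambda$ along $E_N(1)$'' clusters on the metric graph $\tilde{\mathcal G}'_N$. Since the event in Proposition~\ref{prop-key-statement} contains $\{D_{\tilde{\eta}'_N,E_N(1),>\lambda}(\partial V_{\alpha N},\partial V_{\beta N})\le N\mathrm{e}^{(\log N)^{\chi}}\}$, it suffices to bound away from $1$ the probability of the complementary event $\mathcal E\deq\{D_{\tilde{\eta}'_N,E_N(1),>\lambda}(\partial V_{\alpha N},\partial V_{\beta N})> N\mathrm{e}^{(\log N)^{\chi}}\}$; in particular we only need the positive sign. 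I would run an exploration from $V_{\alpha N}$ that grows through those $v\in V_N\cup E_N(1)\subseteq\tilde{\mathcal G}'_N$ with $\tilde{\eta}'_{N,v}>\lambda$, producing (as in \eqref{eq-mathcal-A-B-I}) the explored region $\tilde{\mathcal I}_i$, the shells $\tilde{\mathcal A}_i$ of lattice points at ``$>\lambda$ chemical distance'' exactly $i$ from $V_{\alpha N}$, and the boundary pieces $\tilde{\mathcal B}_i=\partial\tilde{\mathcal I}_i\setminus\tilde{\mathcal A}_i$. The exploration makes $\{\tau=k\}\cap\{\tilde{\mathcal A}_i=\tilde A_i,\tilde{\mathcal B}_i=\tilde B_i,\tilde{\mathcal I}_i=\tilde I_i\text{ for }0\le i\le k\}$ measurable with respect to $\mathcal F_{\tilde I_k}$, in analogy with \eqref{C_k-measurable-tilde}. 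On $\mathcal E$ the region $\tilde{\mathcal I}_\tau$ stays inside $\tilde{\mathcal G}'_{\beta N}$, and a volume count produces a first generation $\tau<N\mathrm{e}^{(\log N)^{\chi}}$ with $|\tilde{\mathcal A}_\tau|\le N\mathrm{e}^{-(\log N)^{\chi}}$.

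Next I would condition on $\mathcal F_{\tilde{\mathcal I}_\tau}$. By the strong Markov property on the metric graph (\cite[Section~3]{Lupu14}), the field outside $\tilde{\mathcal I}_\tau$ is a fresh GFF plus the harmonic extension of the boundary values, so for every $v\in\partial V_{(1+\beta)N/2}$ one has $\E(\tilde{\eta}'_{N,v}\mid\mathcal F_{\tilde{\mathcal I}_\tau})=\sum_{u}\widetilde{\mathrm{Hm}}(v,u;\tilde{\mathcal I}_\tau\cup\partial V_N)\,\tilde{\eta}'_{N,u}$, the sum being over the boundary points of positive harmonic measure (for $B^{\tilde{\mathcal G}'_N}$). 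These split into three groups. The lattice points of the last shell $\tilde{\mathcal A}_\tau$: since the lattice points of $\tilde{\mathcal I}_\tau$ form a connected set of diameter at least $\alpha N$ containing $\tilde{\mathcal A}_\tau$, and $|\tilde{\mathcal A}_\tau|\le N\mathrm{e}^{-(\log N)^{\chi}}$, Lemma~\ref{lem-Makarov} gives them total harmonic mass $o((\log N)^{-10})$, hence off $\Lambda_{\mathrm{bad}}$ a contribution $o((\log N)^{-9})$. The ``wall'' points --- points of $E_N(1)$-edges where $\tilde{\eta}'_{N,\cdot}=\lambda$, which necessarily occur because $\tilde{\mathcal I}_\tau$ is a strict subset of $\tilde{\mathcal G}'_{\beta N}$ --- carry field value exactly $\lambda$, contributing at most $\lambda=O(1)$. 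Finally there are the lattice points of $\tilde{\mathcal I}_\tau$ that are exposed to the complement through the auxiliary edges $E_N(2)$ (which, by the construction of $\mathcal O$ underlying Lemma~\ref{coupling}, are never in the cluster and therefore carry no wall): on these one only knows $\tilde{\eta}'_{N,\cdot}>\lambda$, and a priori the field there could be as large as the global maximum, of order $\log N$.

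Bounding this last contribution is the crux, and the step I expect to be the main obstacle. Conditioning on $\tilde{\eta}'_N>\lambda$ on the explored cluster $\tilde{\mathcal I}_\tau$ is a hard-wall conditioning, and one must show that the typical value of the field on the part of $\tilde{\mathcal I}_\tau$ still visible from outside is not pushed up too far --- concretely, that on an event of probability bounded away from $0$ the harmonic-weighted sum over the exposed lattice points stays $O(1)$ (or at least small enough to be absorbed by the Gaussian comparison below). Heuristically this should hold because wherever the cluster is thick its interior lattice points are shielded by the wall, while wherever an earlier-shell lattice point is exposed the cluster is thin there so the repulsion is mild; but making this rigorous requires an entropic repulsion estimate for the two-dimensional GFF with a hard wall, in the spirit of \cite{Dunlop1992,BDG01} and relying on the FKG inequality (to dominate the conditioned field by a more tractable one) together with the Brascamp--Lieb inequality (to control fluctuations), in the non-standard situation where the wall is imposed on a complicated, exploration-generated region rather than on a box or an annulus, and uniformly over all such regions consistent with $\mathcal E$. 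This is where the decomposition in Lemma~\ref{coupling} into the wall-carrying edges $E_N(1)$ and the free edges $E_N(2)$, rather than a single family of edges, forces genuinely more work than in Theorem~\ref{thm-loop-soup-critical}.

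Granting this estimate, the conclusion follows exactly as in Proposition~\ref{prop-continuous-loop-soup}. With $X$ as in \eqref{eq-define-X}, on the good part of $\mathcal E$ one gets $|\E(X\mid\mathcal F_\tau)|=O(1)$, and since $V_{\alpha N}\subseteq\tilde{\mathcal I}_\tau$ and $G_{\tilde{\mathcal G}'_N\setminus\tilde{\mathcal I}_\tau}(u,v)\le\frac{1}{4} G_{V_N\setminus V_{\alpha N}}(u,v)$, Lemma~\ref{random-walk-lemma-3} gives $\var(X\mid\mathcal F_\tau)\le\var X-c_{11}$, while $\var X\le c_4$ by \eqref{Var-X}. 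Conditionally on $\mathcal F_\tau$, $X$ is therefore Gaussian with mean bounded in absolute value by a constant $M$ and variance at most $\var X-c_{11}$; taking $t=M+s\sqrt{\var X-c_{11}}$ with $s$ a sufficiently large constant makes $\P(X\le t\mid\mathcal F_\tau)\ge\P(Z\le s)$ while $\P(X\le t)=\P\big(Z\le(M+s\sqrt{\var X-c_{11}})/\sqrt{\var X}\big)<\P(Z\le s)$ (using $\sqrt{\var X}-\sqrt{\var X-c_{11}}\ge c_{11}/(2\sqrt{c_4})>0$), so the ratio argument at the end of the proof of Proposition~\ref{prop-continuous-loop-soup} bounds $\P(\mathcal E\cap(\text{good event}))$ away from $1$; combining with \eqref{eq-bad-event} and the positive probability of the good event yields $\P(\mathcal E)\le1-c$, as desired. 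The negative sign is handled symmetrically, though it is not even needed since $\mathcal E$ already refers only to the positive-sign distance.
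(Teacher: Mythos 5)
Your high-level outline correctly identifies the three pillars of the argument (exploration plus Makarov to kill the last-shell contribution, entropic repulsion at the exposed lattice points, variance reduction plus a Gaussian comparison), and your recognition that the entropic-repulsion step is where the real work lies is on target. However, there are two concrete differences from the paper's proof, and the first one is a genuine gap in the form you have stated it.

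First, you reduce to the one-sided event $\mathcal E=\{D_{\tilde\eta'_N,E_N(1),>\lambda}(\partial V_{\alpha N},\partial V_{\beta N})>N\mathrm e^{(\log N)^\chi}\}$ and propose an exploration that reveals the positive cluster together with all of $V_{\alpha N}$ (you write $V_{\alpha N}\subseteq\tilde{\mathcal I}_\tau$ when invoking Lemma~\ref{random-walk-lemma-3}). The inclusion of the interior of $V_{\alpha N}$, or more generally of any lattice point at which the field is pinned to a value exceeding $\lambda+1/N$, breaks the FKG comparison that underlies the entropic-repulsion bound. Concretely, in the proof of Lemma~\ref{X3-conditional-expectation} the Holley/FKG criterion requires each site potential pair $(h_1,h_2)$ to have $h_1-h_2$ nondecreasing; the admissible pairs in the paper are things like $(W^{(q)}(\cdot-t_0),U^{(q)}(\cdot-t_1))$ only for $t_0\le t_1$, and a pin at $x_u>\lambda+1/N$ against the comparison constraint $U^{(q)}(\cdot-\lambda-1/N)$ violates this. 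The paper avoids this precisely by exploring \emph{both} the $>\lambda$ and the $<-\lambda$ clusters simultaneously and by including only $\partial V_{\alpha N}$, not its interior, in $\temp$; the field is then never pinned above $\lambda+1/N$: the constraints are either two-sided with absolute value $\le\lambda+1/N$ (on $B_0,B^{=\lambda}_k,B^{=-\lambda}_k$) or one-sided ($>\lambda+1/N$ on $I^{>\lambda}_k$, $<-\lambda-1/N$ on $I^{<-\lambda}_k$), which are exactly the admissible FKG pairs. If you insist on the one-sided reduction, you must still keep $\partial V_{\alpha N}\subseteq\temp$ for the variance bound, but you may only impose one-sided constraints on $\partial V_{\alpha N}\cap\mathcal H_{>\lambda}$ and on $\partial V_{\alpha N}\cap\mathcal H_{<-\lambda}$; you cannot reveal the field in the interior of $V_{\alpha N}$.

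Second, your quantification of the entropic-repulsion step is too weak. You ask that the harmonic-weighted sum over $J_3$ stay $O(1)$ ``on an event of probability bounded away from $0$,'' but the paper needs (and proves via Lemmas~\ref{X3-conditional-expectation} and~\ref{X3-conditional-variance}, the latter of which controls $\var(X_3\mid H_=,H_+,H_-)$ using Brascamp--Lieb) that the good sub-event $\mathcal E'_{I^{>\lambda}_k,I^{<-\lambda}_k}\subseteq\mathcal E_{I^{>\lambda}_k,I^{<-\lambda}_k}$ has conditional probability $1-o_N(1)$ given $\mathcal E_{I^{>\lambda}_k,I^{<-\lambda}_k}$. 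This matters because the decomposition \eqref{disjoint-union-1} is a disjoint union over exponentially many cases, and the total mass of the ``bad'' parts $\mathcal E_\cdot\setminus\mathcal E'_\cdot$ must be negligible to combine with the Gaussian ratio bound. If each $\mathcal E'_\cdot$ only had conditional probability $\epsilon$ bounded away from $0$ but not close to $1$, the closing chain of inequalities would give $\P(\bigcup\mathcal E_\cdot)\le(1-c')/\epsilon$, which is not $<1$ unless $\epsilon>1-c'$; your final sentence, which appeals to ``the positive probability of the good event,'' does not supply this. The variance estimate of Lemma~\ref{X3-conditional-variance} is what lets the paper pass from a first-moment bound on $X_3$ to a sub-event of conditional probability $1-o_N(1)$, and this is an ingredient you would need to add.

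Beyond these two issues the remaining structure you describe does match the paper's (the $N^3$-discretization $\Pi_N$ to make the FKG argument rigorous, the identification of $J_3$ as lattice points sitting next to a wall on an $E_N(1)$-edge, the Makarov-based harmonic-measure control of $J_4$, and the Gaussian-mixture comparison at the end), so the proposal is a reasonable sketch of the right route, but the details you gloss over are exactly where the paper's care pays off.
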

For the rigor of proof (when applying e.g., FKG inequality later), we will consider the following discrete approximation of the metric graph. We let $\Pi_N=(V_N\cup E_N(1))\cap \frac 1 {N^3} \mathbb Z^2$ be an $N^3$-discretization of $V_N\cup E_N(1)$, and write $\mathcal H_{>\lambda} = \{v\in \Pi_N: \tilde \eta'_{N, v} > \lambda + 1/N\}$ and $\mathcal H_{<-\lambda} = \{v\in \Pi_N: \tilde \eta'_{N, v} <- \lambda - 1/N\}$.
We
define  for $u, v\in \Pi_N$
$$ D_{\tilde {\eta}'_N,E_N(1), >\lambda, \Pi_N} (u, v) = \min_{\gamma} |\gamma|\,,$$
where the minimum is over all path $\gamma\subseteq V_N\cup E_N(1)\subseteq \tilde {\mathcal G}'_N$ joining $u$ and $v$ such that $\gamma \cap \Pi_N \subseteq \mathcal H_{>\lambda}$. We define $ D_{\tilde {\eta}'_N,E_N(1), <-\lambda, \Pi_N}$ similarly.  Analogous to \eqref{eq-mathcal-A-B-I}, we define for $i\geq 1$ (since the notation in this section is already complicated, we drop the tilde for notation convenience)
\begin{equation}\label{eq-A-B-I-loop-soup}
\begin{split}
 {\mathcal I}_i^{>\lambda} & = \{v\in  \Pi_N \setminus \tilde {\mathcal G}_{\alpha N}: D_{\tilde {\eta}'_N,E_N(1), >\lambda, \Pi_N}(v,  \partial V_{\alpha N} \cap \mathcal H_{>\lambda}) \leq i\}\,,\\
{\mathcal A}_i^{>\lambda} &= \{v\in V_N \setminus V_{\alpha N} : D_{\tilde {\eta}'_N,E_N(1), >\lambda, \Pi_N} (v, \partial V_{\alpha N} \cap \mathcal H_{>\lambda}) = i\}\,,\\
{\mathcal B}_i^{=\lambda} &= \{v\in  \Pi_N\setminus (\mathcal I_i^{>\lambda} \cup \tilde {\mathcal G}_{\alpha N}) : v \mbox{ has } \ell_1\mbox{-distance precisely } 1/N^3 \mbox{ from some } u\in \mathcal I_i^{>\lambda} \setminus \mathcal A_i^{>\lambda} \}\,,\\
\mathcal B_0&= \{v\in\partial V_{\alpha N}: |\tilde {\eta}'_{N, v}|\leq \lambda+\frac 1 N\}\,.
\end{split}
\end{equation}
 We note that in definitions of \eqref{eq-A-B-I-loop-soup}, we excluded the interior of $\tilde {\mathcal G}_{\alpha N}$ for convenience of later analysis. We also consider an analogous definition for the version of $ {\mathcal I}_i^{<-\lambda},
{\mathcal A}_i^{<-\lambda},
{\mathcal B}_i^{=-\lambda}$.  One can then verify that the sets of $\mathcal A_i^{>\lambda}, \mathcal B_i^{=\lambda}, \mathcal I_i^{>\lambda}$  as in \eqref{eq-A-B-I-loop-soup} (similar for the version of $<-\lambda$) can be constructed by the following \emph{exploration procedure}.

\noindent{\bf Exploration procedure.}  Initially we set
	\begin{align}
		\mathcal I^{>\lambda}_0 = \mathcal A^{>\lambda}_0 =\partial V_{\alpha N} \cap \mathcal H_{>\lambda},  \quad  \mathcal I^{<-\lambda}_0  = \mathcal A^{<-\lambda}_0 = \partial V_{\alpha N} \cap \mathcal H_{<-\lambda}\,,  \quad {\mathcal B}_0^{=\lambda} = {\mathcal B}_0^{=-\lambda} = \emptyset\,.  \label{eq-B-0-initial}
	\end{align}
	For $i=0,1,2,\ldots$, we run the exploration procedure inductively as follows:
	\begin{itemize}
	\item If $\mathcal A^{>\lambda}_i=\emptyset$, stop the exploration procedure and set  $\mathcal A^{>\lambda}_{j+1}=\emptyset$, $\mathcal B^{=\lambda}_{j+1}=\mathcal B^{=\lambda}_{i}$ and $\mathcal I^{>\lambda}_{j+1}=\mathcal I^{>\lambda}_i$  for all $j\geq i$.

		\item If $\mathcal A^{>\lambda}_i \neq \emptyset$, then for each $v\in \mathcal A^{>\lambda}_i$ and every edge $e(1)=(v,u)\in E_N(1)$ incident to $v$, if $u\in V_N\setminus V_{\alpha N}$ and  $e(1)\cap \Pi_N$ is not contained in  $\mathcal I^{>\lambda}_i$, we explore from $v$ along $e(1)$ to $u$ (note that we only explore vertices in $\Pi_N$) until we reach a point $w\in \Pi_N$ with $\tilde {\eta}'_{N, w}\leq \lambda+\frac 1 N$. If no such $w$ is encountered, then we will explore all $e(1)\cap \Pi_N$ including $u$.
		\item Let $\mathcal I_{i+1}^{>\lambda}$ be the union of $\mathcal I_i^{>\lambda}$ and all vertices explored in previous step with GFF values $> \lambda + 1/N$; let $\mathcal B_{i+1}^{=\lambda}$ be the union of $\mathcal B_i^{=\lambda}$ and all explored vertices in previous step with GFF value $\leq \lambda + 1/N$ (i.e., all the corresponding $w$'s); let $\mathcal A_{i+1}^{>\lambda}$ be the union of all lattice points which are explored in the previous step with GFF value $> \lambda + 1/N$ and furthermore are not in $\cup_{j=0}^{i}\mathcal A^{>\lambda}_j$.
	\end{itemize}
	 We employ a similar procedure for the version of $<-\lambda$.  
	
\begin{proof}[Proof of  Proposition~\ref{prop-key-statement}]
We will first describe an overview of the proof of the proposition, which consists of three steps. In Step 1, we decompose the event that the distance between the two boundaries is large into a disjoint union of events (see \eqref{disjoint-union-1}), where each  event corresponds to some constraints of the GFF values in a certain region (see \eqref{I_k}).  In Step 2, we will show that under the conditioning of the events obtained in Step 1, the conditional law of our ``observable'' (see \eqref{eq-observable}) can be controlled: its mean is shifted by at most a constant (see \eqref{eq-understand-conditioning}) while its variance is decreased by at least some positive constant (see \eqref{eq-conditional-variance-bound}). We remark that in this step Makarov's theorem (see \eqref{eq-harmonic-measure-J-4}) and entropic repulsion type of arguments (see Lemma~\ref{lem-control-X-3}) are employed.  In Step 3, we show that the union of the events obtained in Step 1 cannot occur with probability 1, since (as we will show) the law of a Gaussian variable (our observable) cannot be a mixture of Gaussians with bounded shifts on the mean and non-trivial reduction on the variance. 

\noindent {\bf Step 1. Decomposition of the event.} Let $\Lambda_{\mathrm{bad}} = \{\sup_{v\in V_N} |\tilde {\eta}'_{N, v}| \geq 100 \log N\}$ be as before. Define
	\begin{equation}\label{def-Lambda_c}
		\Lambda_c=\{|\tilde {\eta}'_{N, u}-\tilde {\eta}'_{N, v}|\leq \frac 1 N, \forall e(1)\in E_N(1) \text { and } u,v\in e(1) \text { such that } |u-v|\leq \frac 1 {N^3}\}\,.
	\end{equation}
	Since conditioned on $\{ \tilde {\eta}'_{N, v}:v\in V_N\}$, we have $\{ \tilde {\eta}'_{N, e(1)}:e(1)= (x, y)\in E_N(1)\}$ are independent variance 2 Brownian bridges (which are H\"older continuous of any order less than 1/2) of length 4 with values $\tilde {\eta}'_{N, x}$ and $\tilde {\eta}'_{N, y}$ at the endpoints, we see that
	\begin{equation}\label{eq-Lambda_c-large}
		\P(\Lambda_c)\geq\P(\Lambda_{\mathrm{bad}}^c)(1-o_N(1))=1-o_N(1)\,.
	\end{equation}
	Denote
	$$\mathcal E\deq\{\min \{D_{\tilde {\eta}'_N,E_N(1), >\lambda}(\partial V_{\alpha N}, \partial V_{\beta N}), D_{\tilde {\eta}'_N,E_N(1), <-\lambda}(\partial V_{\alpha N}, \partial V_{\beta N})\}> N \mathrm{e}^{(\log N)^{\chi}}\}\,.$$
	Suppose that both events $\mathcal E$ and $\Lambda_c$ occur, then by \eqref{def-Lambda_c} we must have that $\mathcal I^{>\lambda}_i$ and $\mathcal I^{<-\lambda}_i$ are both disjoint from $V_N\setminus V_{\beta N}$ for all $0\leq i< N \mathrm{e}^{(\log N)^{\chi}}$. Further, since all of the $\mathcal A^{>\lambda}_i$ and $\mathcal A^{<-\lambda}_i$ where $0\leq i< N \mathrm{e}^{(\log N)^{\chi}}$ are disjoint from each other, we see that there exists at least an $i_0 < N \mathrm{e}^{(\log N)^{\chi}}$ such that 
	\begin{equation}
		|\mathcal A^{>\lambda}_{i_0}\cup \mathcal A^{<-\lambda}_{i_0}| \leq N \mathrm{e}^{-(\log N)^{\chi}}\,.
	\end{equation}
	Moreover (still on the event $\mathcal E\cap\Lambda_c$), for all $1\leq i< N \mathrm{e}^{(\log N)^{\chi}}$, we have by \eqref{def-Lambda_c} again
	$$\lambda<\tilde {\eta}'_{N, v}\leq \lambda+\frac 1 N \ \mbox{for all } v\in \mathcal B^{=\lambda}_{i} \quad \text { and } \quad -\lambda-\frac 1 N\leq\tilde {\eta}'_{N, v}<-\lambda \ \mbox{ for all } v\in \mathcal B^{=-\lambda}_{i}\,.$$
	
	We claim that for any $k\geq0$, from $\mathcal I^{>\lambda}_k = I^{>\lambda}_k$ and $\mathcal I^{<-\lambda}_k = I^{<-\lambda}_k$ we can determine (uniquely) the sets $\mathcal A^{>\lambda}_i , \mathcal A^{<-\lambda}_i , \mathcal B^{=\lambda}_{i}, \mathcal B^{=-\lambda}_{i}$ for all $1\leq i\leq k$ as well as $\mathcal B_0$. This is because $D_{\tilde {\eta}'_N,E_N(1), >\lambda, \Pi_N}(v,  \partial V_{\alpha N} \cap \mathcal H_{>\lambda})$  for $v\in \mathcal I^{>\lambda}_k $ is the same as the graph distance on $\mathcal I^{>\lambda}_k$ between $v$ and  $\partial V_{\alpha N} \cap \mathcal H_{>\lambda}$, and thus $\mathcal I^{>\lambda}_i$ for $0\leq i\leq k$ can be determined (similar for the version of $<-\lambda$) and therefore all the other sets can be determined (see \eqref{eq-A-B-I-loop-soup}). We denote $\mathcal A^{>\lambda}_i , \mathcal A^{<-\lambda}_i , \mathcal B^{=\lambda}_{i}, \mathcal B^{=-\lambda}_{i}$ for all $1\leq i\leq k$ and  $\mathcal B_0$ as $A^{>\lambda}_i, A^{<-\lambda}_i, B^{=\lambda}_{i}, B^{=-\lambda}_{i}$ for all $1\leq i\leq k$ and $B_0$, respectively --- they are all functions of $I^{>\lambda}_k$ and $I^{<-\lambda}_k$. We let $\mathcal P_k$ denote all $(I^{>\lambda}_k, I^{<-\lambda}_k)$ such that $I^{>\lambda}_k$ and $I^{<-\lambda}_k$ are both disjoint from $V_N\setminus V_{\beta N}$ and are feasible realizations of $\mathcal I^{>\lambda}_k$ and $\mathcal I^{<-\lambda}_k$, and such that
	\begin{equation}\label{eq-min-i0}
		 \min\{i_0: |A^{>\lambda}_{i_0}\cup   A^{<-\lambda}_{i_0}| \leq N \mathrm{e}^{-(\log N)^{\chi}}\}=k\,.
	\end{equation}
	Denote $\temp\deq B_0\cup  B^{=\lambda}_k\cup B^{=-\lambda}_k\cup I^{>\lambda}_k \cup I^{<-\lambda}_k$.  By our exploration procedure perspective of \eqref{eq-A-B-I-loop-soup}, we have
	$$\{\mathcal I^{>\lambda}_k = I^{>\lambda}_k , \mathcal I^{<-\lambda}_k = I^{<-\lambda}_k \} \in \mathcal F_{\temp} = \sigma(\{\tilde \eta'_{N, v}: v\in \temp\})\,.$$
	In summary of the discussions above, we have
	\begin{equation}\label{disjoint-union-1}
		\mathcal E\cap\Lambda_c\subseteq \bigcup_{\substack{0\leq k< N \mathrm{e}^{(\log N)^{\chi}}\\ (I^{>\lambda}_k, I^{<-\lambda}_k)\in \mathcal P_k}} \mathcal E_{I^{>\lambda}_k, I^{<-\lambda}_k}
	\end{equation}
	where
	\begin{align}\label{I_k}
		\mathcal E_{I^{>\lambda}_k, I^{<-\lambda}_k}=\{ &\tilde {\eta}'_{N, v}>\lambda+\frac 1 N \text{ for all }v\in  I^{>\lambda}_k, \quad \tilde {\eta}'_{N, v}<-\lambda-\frac 1 N \text{ for all } v\in  I^{<-\lambda}_k,\nonumber\\
		& \lambda<\tilde {\eta}'_{N, v}\leq \lambda+\frac 1 N \text{ for all } v\in B^{=\lambda}_k, \, -\lambda-\frac 1 N\leq\tilde {\eta}'_{N, v}<-\lambda \text{ for all } v\in  B^{=-\lambda}_k,\nonumber\\
		& |\tilde {\eta}'_{N, v}|\leq\lambda+\frac 1 N \text{ for all } v\in  B_0\}\,.
	\end{align}
	
	\medskip

\noindent {\bf Step 2: Influence on our observable from the conditioning.} Note that the event  $\mathcal E_{I^{>\lambda}_k, I^{<-\lambda}_k}$ is $\mathcal F_{\temp}$-measurable. 
	Conditioned on $\mathcal F_{\temp}$, the field $\{\tilde {\eta}'_{N, u}: u\in \tilde {\mathcal G}'_N \setminus \temp\}$ is distributed as a GFF with boundary condition $\{\tilde {\eta}'_{N, v}: v\in \temp\}$ and zero on $\partial V_N$. In particular, for each $u\in \partial V_{(1+\beta) N/2}$, we have
	\begin{equation}\label{eq-expression-conditional-expectation-tilde-tilde}
		\E(\tilde {\eta}'_{N, u} \mid \mathcal F_{\temp}) = \sum_{v\in \temp} \widetilde{\mathrm{Hm}}'(u, v; \temp \cup \partial V_N) \cdot \tilde {\eta}'_{N, v}\,.
	\end{equation}
	Here $\widetilde{\mathrm{Hm}}'(u, v; K)$ denotes the harmonic measure of $B^{\tilde {\mathcal G}'_N}$ at $v$ with respect to starting point $u$ and target set $K$.
	
From our exploration procedure, we know that for any $u\in \partial V_{(1+\beta) N/2}$, we have $\{v\in \temp:\widetilde{\mathrm{Hm}}'(u, v; \temp \cup \partial V_N)\neq 0\}\subseteq J_1\cup J_2\cup J_3\cup J_4$, which can be described as follows.
	\begin{itemize}
		\item $J_1=B_0$.
		\item $J_2=  B^{=\lambda}_k \cup  B^{=-\lambda}_k$.
		\item For each $v\in J_3$, we have $v\in A^{>\lambda}_i$ (or $v\in A^{<-\lambda}_i$ respectively) for some $0\leq i\leq k-1$, and on an edge $e(1)=(v,v')\in E_N(1)$ there is a $w\in B^{=\lambda}_{i+1} \subseteq B^{=\lambda}_k$ (or $w\in B^{=-\lambda}_{i+1}$ respectively). In particular, each $v\in J_3$ must satisfy that $v\in V_N$ and that $v$ has Euclidean distance less than 1 to a point $w\in J_2$.
		\item $J_4=A^{>\lambda}_k\cup A^{<-\lambda}_k$.
	\end{itemize}
	Define
	\begin{equation}\label{eq-observable}
	X=\frac {1}{ |\partial V_{(1+\beta) N/2}|}\sum_{u\in  \partial V_{(1+\beta) N/2}} \tilde {\eta}'_{N, u}\,.
	\end{equation}
	Note that the preceding definition of $X$ is consistent with that of  \eqref{eq-define-X}, since $\{\tilde \eta'_{N, u}: u\in V_N\}$ has the same law as $\{\eta_{N, u}: u\in V_N\}$.
	Further define
	$$X_1=\frac {1}{ |\partial V_{(1+\beta) N/2}|}\sum_{u\in  \partial V_{(1+\beta) N/2}}\sum_{v\in J_1} \widetilde{\mathrm{Hm}}'(u, v; \temp \cup \partial V_N) \cdot \tilde {\eta}'_{N, v}\,,$$
	$$X_2=\frac {1}{ |\partial V_{(1+\beta) N/2}|}\sum_{u\in  \partial V_{(1+\beta) N/2}}\sum_{v\in J_2} \widetilde{\mathrm{Hm}}'(u, v; \temp \cup \partial V_N) \cdot \tilde {\eta}'_{N, v}\,,$$
	$$X_3=\frac {1}{ |\partial V_{(1+\beta) N/2}|}\sum_{u\in  \partial V_{(1+\beta) N/2}}\sum_{v\in J_3} \widetilde{\mathrm{Hm}}'(u, v; \temp \cup \partial V_N) \cdot \tilde {\eta}'_{N, v}\,,$$
	$$X_4=\frac {1}{ |\partial V_{(1+\beta) N/2}|}\sum_{u\in  \partial V_{(1+\beta) N/2}}\sum_{v\in J_4} \widetilde{\mathrm{Hm}}'(u, v; \temp \cup \partial V_N) \cdot \tilde {\eta}'_{N, v}\,.$$
	Then by \eqref{eq-expression-conditional-expectation-tilde-tilde}, we have
	$$\E(X \mid \mathcal F_{\temp})=X_1+X_2+X_3+X_4\,.$$
	It is clear that $|X_1|\leq \lambda+\frac 1 N$ and $|X_2|\leq \lambda+\frac 1 N$ always hold. Let $D_k=(\cup_{i=0}^{k}A^{>\lambda}_i)\cup(\cup_{i=0}^{k}A^{<-\lambda}_i)\cup B_0$ be the set of all the lattice points in $\temp$ (so $D_k \supseteq \partial V_{\alpha N}$ and $D_k$ is connected). Then $J_4\subseteq D_k\subseteq \temp$, so that 
	\begin{equation}
		\widetilde{\mathrm{Hm}}'(u, J_4; \temp \cup \partial V_N) \leq \widetilde{\mathrm{Hm}}'(u, J_4 ; D_k \cup \partial V_N)\,.
	\end{equation}
	Since $|J_4|\leq N \mathrm{e}^{-(\log N)^{\chi}}$ by \eqref{eq-min-i0}, we deduce from Lemma~\ref{lem-Makarov} that (recall $D_k \supseteq \partial V_{\alpha N}$ and $D_k$ is connected)
	\begin{equation}\label{eq-harmonic-measure-J-4}
	\widetilde{\mathrm{Hm}}'(u, J_4 ; D_k \cup \partial V_N)=\mathrm{Hm}(u,  J_4;  D_k \cup \partial V_N) \leq \mathrm{Hm}(u,  J_4;  D_k)= o(\log N)^{-10}\,,
	\end{equation}
	and therefore $\widetilde{\mathrm{Hm}}'(u, J_4; \temp \cup \partial V_N)= o(\log N)^{-10}$. Recall that $\Lambda_{\mathrm{bad}} = \{\sup_{v\in V_N} |\tilde {\eta}'_{N, v}| \geq 100 \log N\}$. Then if the event $\Lambda_{\mathrm{bad}}$ does not occur, we have
	$|X_4|=o(\log N)^{-8}$.
	
	It now remains to control $X_3$ on the event $\mathcal E_{I^{>\lambda}_k, I^{<-\lambda}_k}$. We will show in Lemma~\ref{lem-control-X-3} below that there exists a $\mathcal F_{\temp}$-measurable event $\mathcal E'_{I^{>\lambda}_k, I^{<-\lambda}_k}\subseteq \mathcal E_{I^{>\lambda}_k, I^{<-\lambda}_k}$ such that
	\begin{equation}\label{eq-understand-conditioning}
	  \E(X \mid \mathcal F_{\temp}) \leq \Delta \mbox{ on the event } \mathcal E'_{I^{>\lambda}_k, I^{<-\lambda}_k}\setminus\Lambda_{\mathrm{bad}}\,,
	 \end{equation}
where $\Delta>0$ is a constant depending only on $\lambda$  and moreover
	\begin{equation}\label{eq-high-probability}
		\P(\mathcal E'_{I^{>\lambda}_k, I^{<-\lambda}_k})\geq (1-o_N(1))\P(\mathcal E_{I^{>\lambda}_k, I^{<-\lambda}_k})\,.
	\end{equation}
	In addition, recall that $\var X\leq c_4$ by \eqref{Var-X} and note that on the event $\mathcal E'_{I^{>\lambda}_k, I^{<-\lambda}_k}\setminus\Lambda_{\mathrm{bad}}$, by \eqref{random-walk-lemma-3-eq} we have 
	\begin{equation}\label{eq-conditional-variance-bound}
	\var X-\var( X \mid \mathcal F_{\temp})\geq c_{11}>0\,.
	\end{equation}  
	
	\medskip
	
\noindent {\bf Step 3: Gaussian v.s. a mixture of Gaussians.} Let $t=\Delta+s {\sqrt{\var X-c_{11}}}$ for $s>0$. Then on the event $\mathcal E'_{I^{>\lambda}_k, I^{<-\lambda}_k}\setminus\Lambda_{\mathrm{bad}}$, we have
	$$\P(X\leq t \mid \mathcal F_{\temp}) \geq \P(Z(\Delta,\var X-c_{11})\leq t)=\P(Z\leq s)\,,$$
	where $Z(\Delta, \var X - c_{11})$ is a Gaussian variable with mean $\Delta$ and variance $\var X - c_{11}$, and $Z$ is a standard Gaussian variable. Therefore (since $\mathcal E'_{I^{>\lambda}_k, I^{<-\lambda}_k}$ is $\mathcal F_{\temp}$-measurable)
	$$\P(X \leq t, \mathcal E'_{I^{>\lambda}_k, I^{<-\lambda}_k})=\E (\P(X\leq t \mid \mathcal F_{\temp})\1_{\mathcal E'_{I^{>\lambda}_k, I^{<-\lambda}_k}})\geq\P(Z\leq s)\P(\mathcal E'_{I^{>\lambda}_k, I^{<-\lambda}_k}\setminus\Lambda_{\mathrm{bad}})\,.$$
	Summing this over all $0\leq k< N \mathrm{e}^{(\log N)^{\chi}}$ and all $(I^{>\lambda}_k, I^{<-\lambda}_k)\in \mathcal P_k$, we have
	$$\P(X \leq t)\geq \P(Z\leq s) \P((\bigcup_{\substack{0\leq k< N \mathrm{e}^{(\log N)^{\chi}}\\ (I^{>\lambda}_k, I^{<-\lambda}_k)\in \mathcal P_k}} \mathcal E'_{I^{>\lambda}_k, I^{<-\lambda}_k})\setminus \Lambda_{\mathrm{bad}})\,.$$
	Therefore, for a sufficiently large constant $s>0$ and a constant $c'>0$, we have
	$$\P(\bigcup_{\substack{0\leq k< N \mathrm{e}^{(\log N)^{\chi}}\\ (I^{>\lambda}_k, I^{<-\lambda}_k)\in \mathcal P_k}} \mathcal E'_{I^{>\lambda}_k, I^{<-\lambda}_k})\leq \frac {\P(X \leq t)}{\P(Z\leq s)}+\P(\Lambda_{\mathrm{bad}})\leq 1-c'\,.$$
	Combined with \eqref{eq-high-probability}, \eqref{disjoint-union-1} and \eqref{eq-Lambda_c-large}, this gives us the result of the proposition.\qed
\end{proof}

\begin{lemma}\label{lem-control-X-3}
There exists a $\mathcal F_{\temp}$-measurable event $\mathcal E'_{I^{>\lambda}_k, I^{<-\lambda}_k}\subseteq \mathcal E_{I^{>\lambda}_k, I^{<-\lambda}_k}$ such that \eqref{eq-understand-conditioning} and \eqref{eq-high-probability} hold.
\end{lemma}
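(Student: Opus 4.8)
Recall from the paragraphs preceding the lemma that $\E(X\mid\mathcal F_{\temp})=X_1+X_2+X_3+X_4$, that $|X_1|,|X_2|\le\lambda+1/N$ holds deterministically on $\mathcal E_{I^{>\lambda}_k,I^{<-\lambda}_k}$ (the weights are harmonic measures and $|\tilde\eta'_{N,v}|\le\lambda+1/N$ for $v\in J_1\cup J_2$), and that $|X_4|=o((\log N)^{-8})$ off $\Lambda_{\mathrm{bad}}$. Since $\temp$, the sets $J_1,\dots,J_4$ and the weights $\widetilde{\mathrm{Hm}}'(u,\cdot\,;\temp\cup\partial V_N)$ are deterministic functions of the fixed pair $(I^{>\lambda}_k,I^{<-\lambda}_k)$, the event $\{X_3\le M\}$ is $\mathcal F_{\temp}$-measurable. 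The plan is therefore to set
$$\mathcal E'_{I^{>\lambda}_k,I^{<-\lambda}_k}=\mathcal E_{I^{>\lambda}_k,I^{<-\lambda}_k}\cap\{X_3\le M\},\qquad\Delta=2\lambda+1+M,$$
for a constant $M=M(\lambda)$ to be chosen; then \eqref{eq-understand-conditioning} is immediate and \eqref{eq-high-probability} is equivalent to the estimate $\P\big(X_3>M\mid\mathcal E_{I^{>\lambda}_k,I^{<-\lambda}_k}\big)=o_N(1)$, which is what the rest of the argument must establish.

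Two structural facts will organize the control of $X_3$. First, splitting $J_3=J_3^+\sqcup J_3^-$ according to the sign side, the $J_3^-$-terms of $X_3$ are nonpositive on $\mathcal E_{I^{>\lambda}_k,I^{<-\lambda}_k}$, so $X_3\le\lambda\,w^++R$ where $w^+:=|\partial V_{(1+\beta)N/2}|^{-1}\sum_u\widetilde{\mathrm{Hm}}'(u,J_3^+;\temp\cup\partial V_N)\le1$ and
$$R:=\frac{1}{|\partial V_{(1+\beta)N/2}|}\sum_u\sum_{v\in J_3^+}\widetilde{\mathrm{Hm}}'(u,v;\temp\cup\partial V_N)\,(\tilde\eta'_{N,v}-\lambda)\ \ge\ 0.$$
Second, for a single lattice point $v\in V_{\beta N}$ and any $u\in\partial V_{(1+\beta)N/2}$ one has the a priori bound $\widetilde{\mathrm{Hm}}'(u,v;\temp\cup\partial V_N)\le\widetilde{\mathrm{Hm}}'(u,v;\{v\}\cup\partial V_N)=G_{\tilde{\mathcal G}'_N}(u,v)/G_{\tilde{\mathcal G}'_N}(v,v)=O(1/\log N)$, since $u$ and $v$ are macroscopically separated while $G_{\tilde{\mathcal G}'_N}(v,v)\asymp\log N$. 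Thus $R$ is a harmonic-measure average of the excesses $(\tilde\eta'_{N,v}-\lambda)_{v\in J_3^+}$ in which each individual weight is $O(1/\log N)$ while the total weight is at most one.

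The heart of the proof is an entropic repulsion estimate: uniformly over admissible shapes and over $v\in J_3^+$,
$$\E\big[\tilde\eta'_{N,v}-\lambda\ \big|\ \mathcal E_{I^{>\lambda}_k,I^{<-\lambda}_k}\big]\le K(\lambda),\qquad \P\big(\tilde\eta'_{N,v}-\lambda>t\ \big|\ \mathcal E_{I^{>\lambda}_k,I^{<-\lambda}_k}\big)\le e^{-c(t-K(\lambda))^2}\quad(t>K(\lambda)).$$
The geometric input is that membership of $v$ in $J_3^+$ — i.e.\ $v$ being reachable by $B^{\tilde{\mathcal G}'_N}$ through the unexplored region — forces the conditioning region to admit a \emph{macroscopic} hole within metric-graph distance $\le4$ of $v$: on the incident $E_N(1)$-edge that exposes $v$ there is a point $w\in B^{=\lambda}_k$ with $\tilde\eta'_{N}|_{[v,w]\cap\Pi_N}>\lambda+1/N$ and $\tilde\eta'_{N,w}\le\lambda+1/N$, and the remainder of that edge lies outside $\temp$ and joins, through $E_N(2)$-edges, the component of the starting point. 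The plan is then to (i) use the FKG inequality \cite{FKG71,Preston74} to dominate the conditional law of $\tilde\eta'_{N,v}$ under $\mathcal E_{I^{>\lambda}_k,I^{<-\lambda}_k}$ by its law under the cleaner event obtained by additionally requiring $\tilde\eta'_{N}\ge\lambda$ on all of $\tilde{\mathcal G}'_N$ except the negative region and a fixed-radius ball around $w$ — an increasing constraint, so the domination goes in the right direction — thereby reducing the geometry to a standard one (positivity on a large region with a macroscopic hole near $v$, and the cap $\tilde\eta'_{N,w}\le\lambda+1/N$ retained); (ii) invoke the hard-wall entropic repulsion estimates of \cite{Dunlop1992,BDG01} in the form that the so-conditioned two-dimensional GFF, evaluated at a point lying within $O(1)$ of a macroscopic piece of the free boundary, exceeds the wall height by only $O_\lambda(1)$ in expectation — this is the ``somewhat non-standard'' part alluded to in Section~\ref{sec:proof-ingredients}; and (iii) first condition on the essentially deterministic value $\tilde\eta'_{N,w}\approx\lambda$ and then apply the Brascamp--Lieb inequality \cite{BL76} to bound the residual conditional variance of $\tilde\eta'_{N,v}$ by $O(1)$ (valid because $w$ lies within distance $4$ of $v$), upgrading the first-moment bound to the sub-Gaussian tail. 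Granting these bounds, the first-moment estimate gives $\E[R\mid\mathcal E_{I^{>\lambda}_k,I^{<-\lambda}_k}]\le K(\lambda)$, while the per-point sub-Gaussian tail, combined with $\widetilde{\mathrm{Hm}}'(u,v;\cdot)=O(1/\log N)$, $w^+\le1$, and the fast spatial decay of the conditioned covariances on the thin collar $J_3^+$ (points at distance $O(1)$ from the free boundary have conditional covariance $O(|v-v'|^{-2})$, weighted by the $O(1/\log N)$ harmonic weights), yields the matching second-moment estimate for $R$; Chebyshev's inequality then makes $\P(X_3>M\mid\mathcal E_{I^{>\lambda}_k,I^{<-\lambda}_k})=\P(\lambda w^++R>M\mid\cdots)$ smaller than $o_N(1)$ once $M=M(\lambda)$ is taken large. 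This gives \eqref{eq-understand-conditioning} and \eqref{eq-high-probability}.

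The genuine difficulty will lie in steps (ii)--(iii): proving the entropic repulsion bound \emph{uniformly over all connected shapes} $I^{>\lambda}_k,I^{<-\lambda}_k$ that the exploration can produce, in a setting that differs from \cite{Dunlop1992,BDG01} in three ways — the free boundary is only \emph{locally} macroscopic, the hard wall sits at level $\lambda$ rather than $0$, and there is the extra conditioning coming from the negative sign cluster. Everything else is bookkeeping around the harmonic-measure and FKG inputs already in place.
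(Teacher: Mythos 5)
Your proposal is structurally the same as the paper's: define $\mathcal E' = \mathcal E_{I^{>\lambda}_k,I^{<-\lambda}_k}\cap\{X_3\le M\}$, note this is $\mathcal F_{\temp}$-measurable, and argue that $\{X_3>M\}$ has conditional probability $o_N(1)$ via a first-moment bound (entropic repulsion through FKG) plus a variance bound (Brascamp--Lieb plus decorrelation). The paper does exactly this, packaging the two moment estimates as Lemmas~\ref{X3-conditional-expectation} and \ref{X3-conditional-variance}. Your $J_3^\pm$ split, the per-point weight bound $\widetilde{\mathrm{Hm}}'(u,v;\cdot)=O(1/\log N)$, and $\Delta = 2\lambda+1+M$ are all fine.

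However, inside the part you flag as the ``genuine difficulty'' there are two concrete gaps. First, your claimed conditional covariance decay $O(|v-v'|^{-2})$ for points of $J_3^+$ is unsupported and too strong: the pinned set $J_1\cup J_2$ is a sparse cloud of points (zeros collected along explored $E_N(1)$-segments plus $B_0$), not a dense tube, and the Brownian motion on $\tilde{\mathcal G}'_N$ can avoid it by detouring through $E_N(2)$. The paper establishes only $G_{\tilde{\mathcal G}'_N\setminus(J_1\cup J_2)}(u_1,u_2)\le C_6/\log N$ for pairs at distance $\ge (\log N)^{10}$, via an iterated Beurling estimate that exploits the fact that $J_1\cup J_2\cup J_4$ is $*$-connected with macroscopic diameter, and handles the nearby pairs by invoking Makarov once more to show their total harmonic weight is $o((\log N)^{-10})$. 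That weaker decay is exactly enough to yield $\var(X_3\mid\cdots)\le C_4/\log N$; your stronger claim is not needed and would require a genuinely different geometric argument. Second, your proposed FKG domination — condition on positivity everywhere except near $w$ and the negative region — cannot work as stated, because $\tilde\eta'_N$ vanishes on $\partial V_N$, so the reference event has probability zero (or exponentially small after a shift to a fixed level). The paper's resolution is the harmonic function $f$ of Lemma~\ref{lem-harmonic-function}, harmonic away from $\{w\}\cup\partial V_N$ with a logarithmic singularity at $w$, used to shift the comparison field on $\tilde{\mathcal G}'_{N+2}$; this lifts the field near $\partial V_N$ so that the positivity event has probability bounded below (at least $1/4$ in \eqref{all-positivity-3}) while keeping the conditional mean at $v$ under control via \eqref{f(u)-bound} and \eqref{variance-bound}. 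Without that construction, step~(ii) of your plan does not go through. You correctly identify that the difficulty concentrates in uniformity over exploration shapes, the non-zero wall level, and the extra negative-cluster conditioning, but the specific mechanisms you sketch for handling them are not the ones that work.
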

\begin{proof}
The proof of Lemma~\ref{lem-control-X-3} constitutes the rest of the paper. To this end, we consider any fixed numbers $\{x_{N,v}\}_{v\in B_0\cup  B^{=\lambda}_k \cup  B^{=-\lambda}_k}$ such that 
	\begin{align*}
		|x_{N, v}|&\leq\lambda+\frac 1 N \ \mbox{ for all } v\in  B_0, \quad \lambda<x_{N, v}\leq \lambda+\frac 1 N \mbox{ for all } v\in  B^{=\lambda}_k,\\
		-\lambda-\frac 1 N& \leq x_{N, v}<-\lambda \mbox{ for all }  v\in  B^{=-\lambda}_k\,.
	\end{align*}
	We define three events $H_=, H_+, H_-$ as follows:
	\begin{itemize}
		\item $H_= = \{\tilde {\eta}'_{N, v}=x_{N,v} \text{ for all } v\in  B_0\cup  B^{=\lambda}_k \cup B^{=-\lambda}_k\}$;
		\item $H_+ = \{\tilde {\eta}'_{N, v}>\lambda+\frac 1 N \text{ for all }v\in  I^{>\lambda}_k\}$;
		\item $H_- = \{\tilde {\eta}'_{N, v}<-\lambda-\frac 1 N \text{ for all } v\in  I^{<-\lambda}_k\}$.
	\end{itemize}
	We will show in Lemmas~\ref{X3-conditional-expectation} and \ref{X3-conditional-variance} below that conditioned on $H_=\cap H_-\cap H_+$, we have
	\begin{enumerate}[(a)]
		\item $\E(X_3\mid H_=, H_+, H_-) \leq \lambda + \frac 1 N + C_0$ where $C_0$ is a constant; \label{eq-a}
		\item $\var(X_3\mid H_=, H_+, H_-)=o_N(1)$ (where we use $o_N(1)$ to denote a quantity that only depends on $N$ and tends to 0 as $N\to \infty$). \label{eq-b}
	\end{enumerate}
	As a corollary of \eqref{eq-a} and \eqref{eq-b}, conditioned on $\{H_=,H_+,H_-\}$, we have $X_3$ itself is bounded from above by $\lambda + C_0 + 1$ with probability $(1-o_N(1))$. By integrating over all $\{x_{N,v}\}_{v\in B_0\cup   B^{=\lambda}_k \cup  B^{=-\lambda}_k}$, we conclude the proof of the lemma. \qed
\end{proof}
It remains to prove \eqref{eq-a} and \eqref{eq-b}, which are incorporated in Lemmas \ref{X3-conditional-expectation} and \ref{X3-conditional-variance} below.
\begin{lemma}\label{X3-conditional-expectation}
	There exists a constant $C_0>0$ such that for any $v\in J_3$,  we have
	$$\E(\tilde {\eta}'_{N, v}\mid H_=, H_+, H_-)\leq \lambda+\frac 1 N +C_0\,.$$
\end{lemma}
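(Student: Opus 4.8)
The plan is to bound the conditional expectation of $\tilde{\eta}'_{N,v}$ for a single $v\in J_3$ by using the defining property of $J_3$: every such $v$ lies in some $\mathcal A^{>\lambda}_i$ (or $\mathcal A^{<-\lambda}_i$) and is joined by an edge $e(1)=(v,v')\in E_N(1)$ to a point $w\in B^{=\lambda}_k$ (respectively $B^{=-\lambda}_k$) with $\tilde{\eta}'_{N,w}\in(\lambda,\lambda+\tfrac1N]$. Thus $v$ is at graph-distance at most one edge of $E_N(1)$ — i.e. distance at most $4$ in the metric graph ${\tilde{\mathcal G}}'_N$ — from a vertex $w$ on which we are \emph{conditioning} a value essentially equal to $\lambda$. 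First I would condition further on all the coordinates of $\tilde{\eta}'_{N}$ at the lattice endpoints, or at least use the strong Markov/Gibbs property: conditionally on $H_=$, the value $\tilde{\eta}'_{N,w}=x_{N,w}$ is fixed, so the law of $\tilde{\eta}'_{N,v}$ given $H_=$ is Gaussian with a mean that is the harmonic average of the boundary data over $\temp\cup\partial V_N$. I want to show that, \emph{before} imposing the positivity constraints $H_+,H_-$, this conditional mean is at most $\lambda+\tfrac1N+C_0'$ — but that is false in general, since other conditioned points may carry large values. The correct route is to exploit the FKG inequality: conditioning on $H_+$ (all GFF values on $I^{>\lambda}_k$ exceeding $\lambda+\tfrac1N$) only \emph{raises} $\tilde{\eta}'_{N,v}$, and conditioning on $H_-$ only \emph{lowers} it, so I should drop the constraint that helps me and keep only the one that hurts. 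Concretely, by the FKG inequality (which applies to the GFF on the discretized metric graph $\Pi_N$ as a monotone positively-associated field, cf.\ \cite{FKG71,Preston74}), conditionally on $H_=$ the event $H_-$ is decreasing in the field values near $v$ and $H_+$ is increasing, hence
\[
\E(\tilde{\eta}'_{N,v}\mid H_=,H_+,H_-)\ \le\ \E(\tilde{\eta}'_{N,v}\mid H_=,H_+).
\]

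Next I would control $\E(\tilde{\eta}'_{N,v}\mid H_=,H_+)$. Here I use the Markov property along the single edge $e(1)$ connecting $v$ to $w$. Decompose the metric graph by the local time / bridge decomposition of \cite{Lupu14}: conditionally on the values of $\tilde{\eta}'_{N}$ on the lattice points, the field on the interior of each edge of $E_N(1)$ is an independent variance-$2$ Brownian bridge. I would condition on $H_=$ together with the \emph{event that the bridge on $e(1)$ does not exceed $\lambda$ anywhere in its interior} (this is implied by the exploration procedure producing $w$ as the first point below $\lambda+\tfrac1N$, so on $H_=\cap H_+$ we are in fact conditioning the bridge on $[0,4]$ from $\tilde{\eta}'_{N,v}>\lambda$ down to $x_{N,w}\le\lambda+\tfrac1N$, constrained to reach a value $\le\lambda+\tfrac1N$ somewhere). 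The key estimate is then a one-dimensional Brownian-bridge computation: a variance-$2$ bridge of length $4$ started at level $a>\lambda$, ending at level $\le\lambda+\tfrac1N$, conditioned to dip to $\le\lambda+\tfrac1N$ before the end, has starting level $a$ whose conditional expectation is at most $\lambda+\tfrac1N+C_0$ for an absolute constant $C_0$ — this follows from the reflection principle exactly as in the proof of Lemma~\ref{coupling} (the same computation that gave $1-p_e=\mathrm e^{-\frac14(\tilde{\eta}'_{N,x}-\lambda)(\tilde{\eta}'_{N,y}-\lambda)}$), since the conditional density of the overshoot $a-\lambda$ decays like $\mathrm e^{-c(a-\lambda)}$. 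Integrating the overshoot tail bounds the conditional mean of $\tilde{\eta}'_{N,v}=a$ by $\lambda+\tfrac1N+C_0$.

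The one remaining subtlety is that $v$ is connected to \emph{other} conditioned points besides $w$ (its other $E_N(1)$-edges may lead into $I^{>\lambda}_k$), and those carry the constraint $>\lambda+\tfrac1N$, which pushes $\tilde{\eta}'_{N,v}$ \emph{up}. This is exactly why the FKG step above is essential: having already absorbed $H_+$ monotonically, I must still argue that conditioning on the field being large along the rest of $I^{>\lambda}_k$ does not destroy the bound. I would handle this by a second FKG/Gibbs comparison at the level of the single vertex $v$: by the Markov property, given the field on $\temp\setminus\{\text{interior of }e(1)\}$, the variable $\tilde{\eta}'_{N,v}$ is Gaussian with mean a convex combination of (i) the harmonic value coming through $e(1)$ from $w$ — which is $\le\lambda+\tfrac1N$ — and (ii) harmonic contributions from the other neighbors of $v$ in $I^{>\lambda}_k$. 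The point is that the effective conductance of the single short edge $e(1)$ (length $4$, conductance $1/8$ in ${\mathcal G}'_N$) relative to the total conductance at $v$ is bounded below by an absolute constant, so the ``pull toward $w$'' is of constant strength, and combined with the overshoot estimate and the a priori bound $\sup_{u}|\tilde{\eta}'_{N,u}|\le 100\log N$ off $\Lambda_{\mathrm{bad}}$ (which only enters to make error terms negligible), one gets the stated bound with $C_0$ depending only on $\lambda$. I expect this last point — disentangling the upward push from the rest of $I^{>\lambda}_k$ from the downward pull of the single constrained point $w$, uniformly in the (random, complicated) geometry of $I^{>\lambda}_k$ — to be the main obstacle, and the FKG inequality together with the constant lower bound on the relative conductance of $e(1)$ is what makes it go through.
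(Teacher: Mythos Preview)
Your first FKG reduction $\E(\tilde{\eta}'_{N,v}\mid H_=,H_+,H_-)\le\E(\tilde{\eta}'_{N,v}\mid H_=,H_+)$ is fine, but the rest of the argument does not close. The essential difficulty is that $H_+$ conditions the field to exceed $\lambda+\tfrac1N$ on the \emph{entire} set $I^{>\lambda}_k$, which may contain order $N^2$ points; this is a hard-wall constraint, and entropic repulsion can push the field values on $I^{>\lambda}_k$ up by an amount growing with $N$. Your local Brownian-bridge computation on the single edge $e(1)$ does not address this: the bridge endpoints are $\tilde{\eta}'_{N,v}$ and $\tilde{\eta}'_{N,v'}$, and the law of $\tilde{\eta}'_{N,v}$ is governed by the whole conditioned field, not by the bridge. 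Your final step --- that the conductance of $e(1)$ toward $w$ is a constant fraction of the total conductance at $v$ --- only shows that the \emph{unconditioned} harmonic mean at $v$ has a constant share coming from $w$; it says nothing about how large the other neighbors of $v$ become under $H_+$. Appealing to $\Lambda_{\mathrm{bad}}$ would at best give $O(\log N)$, not a constant, and in any case the lemma is stated without that restriction.

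The paper's proof is genuinely different and uses the entropic-repulsion machinery of \cite{Dunlop1992,BDG01}. One first constructs (Lemma~\ref{lem-harmonic-function}) a positive function $f$ on $\tilde{\mathcal G}'_N$, harmonic off $\{w\}\cup\partial V_N$, with $f(u)\asymp C\log(|u-w|+2)+C_1$; in particular $f(v)=O(1)$ since $|v-w|<4$. A stochastic-domination argument (via the Holley--Preston criterion) replaces the conditioning $H_=\cap H_+\cap H_-$ by the single constraint that the field, with boundary data $f+\lambda+\tfrac1N$ at $w$ and on $\partial V_N$, is positive on all of $\Pi_N\setminus\{w\}$. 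Under this tilted boundary, the mean at any $u$ is $f(u)$ and the variance is $O(\log(|u-w|+2))$; since $C$ is chosen large, a union bound gives $\P_M(\text{positivity everywhere})\ge 1/4$, while $\E_M|\tilde{\eta}'_{N+2,v}|\le C_3$ because $f(v)=O(1)$. Hence the conditional expectation at $v$ is at most $\lambda+\tfrac1N+4C_3$. The logarithmic growth of $f$ is exactly what absorbs the entropic-repulsion lift that your local argument cannot handle.
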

Before proving Lemma~\ref{X3-conditional-expectation}, we first prove a technical lemma on the existence of a certain harmonic function, which follows from a modification of a standard result. For any $w\in \tilde {\mathcal G}'_N$, we say a function $f$ is harmonic on $\tilde {\mathcal G}'_N\setminus (\partial V_N \cup \{w\})$ if the restriction of $f$ on $V_N\cup\{w\}$ (i.e.,  $f(u)$ restricted on $u\in V_N\cup\{w\}$) is harmonic on the discrete graph $(V_N\cup\{w\}, E'_{N,w})$ except at $w$ and $\partial V_N$, and $f$ is linear on each segment $e'\in E'_{N,w}$, where $E'_{N,w}=\{e': e'\in E_N(1)\cup E_N(2) \text { and } w\notin e'\} \cup \{(v,w),(w, v')\}$. We remark that in our application $w$ is given in the definition of $J_3$ (in Step 2 of the proof for  Proposition~\ref{prop-key-statement}), and our intuition is that two nearby points cannot take GFF values that are too different from each other (even under the conditioning of reasonable events). This intuition is reflected partly in \eqref{f(u)-bound} below. 
\begin{lemma}\label{lem-harmonic-function}
 For any $w\in \tilde {\mathcal G}'_N$ and any constants  $C, C_1>0$ that can be taken to be arbitrarily large, there exists a function $f$ defined on $\tilde {\mathcal G}'_N$ such that $f$ is harmonic on $\tilde {\mathcal G}'_N\setminus (\{w\} \cup \partial V_N)$. In addition, 
	\begin{equation}\label{f(u)-bound}
		|f(u)-C\log (|u-w|+2)-C_1|\leq L(C) \quad \text{ for all } u\in \tilde {\mathcal G}'_N\,,
	\end{equation}
	where $L(C)$ depends only on $C$. In particular, we take $C_1>L(C)$ so that $f(u)>0$ for all $u\in \tilde {\mathcal G}'_N$.
\end{lemma}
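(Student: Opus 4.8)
The plan is to take for $f$ a rescaled and shifted copy of the potential kernel, with pole at $w$, of the cable Brownian motion. Let $\tilde{\mathcal G}'$ be the metric graph over all of $\mathbb Z^2$ built by the same recipe as $\tilde{\mathcal G}'_N$ (each edge of $\mathbb Z^2$ split into two parallel segments of lengths $4$ and $4/7$ with conductances $1/8$ and $7/8$), so that $\tilde{\mathcal G}'_N\subseteq\tilde{\mathcal G}'$ and $B^{\tilde{\mathcal G}'_N}$ is $B^{\tilde{\mathcal G}'}$ killed on $\partial V_N$. Two elementary observations drive everything. First, by the symmetry of the two parallel segments joining each vertex to each neighbour, the sequence of lattice points visited by $B^{\tilde{\mathcal G}'}$ is simple random walk on $\mathbb Z^2$, and harmonicity in the sense of the definition preceding the lemma is exactly cable-harmonicity of $B^{\tilde{\mathcal G}'}$ off $w$ --- a \emph{local} condition, which at lattice points other than $w$ unwinds to the usual $\sum_{v'\sim v}f(v')=4f(v)$ since the aggregate of conductance$/$length over each segment pair is the same for all edges. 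Second, we will use the classical $\mathbb Z^2$ potential kernel $a(\cdot)$: $a(0)=0$, $a$ is harmonic on $\mathbb Z^2\setminus\{0\}$, $a(x)=\tfrac2\pi\log|x|+\kappa+O(|x|^{-2})$, and $|a(x)-a(y)|\le c$ when $|x-y|=1$ (see \cite[Section~1.6]{Lawler91} or \cite[Chapter~4]{LL10}).

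First I would construct the cable potential kernel $\tilde a_w$ on $\tilde{\mathcal G}'$ with pole at $w$: since $B^{\tilde{\mathcal G}'}$ is recurrent (its embedded chain being simple random walk on $\mathbb Z^2$), the standard construction produces a function with $\tilde a_w(w)=0$, harmonic on $\tilde{\mathcal G}'\setminus\{w\}$, and growing logarithmically; when $w$ is a lattice point, $\tilde a_w$ restricted to $\mathbb Z^2$ equals $\gamma\, a(\cdot-w)$ for an absolute normalisation constant $\gamma>0$, and on each segment $\tilde a_w$ is the linear interpolation of its endpoint values. For a general $w$ lying in the interior of a segment of the pair over an edge $(x,y)$ of $\mathbb Z^2$, I would compare $\tilde a_w$ with $\tilde a_x$ (pole at the lattice point $x$): the difference $\tilde a_w-\tilde a_x$ is harmonic off the two-point set $\{w,x\}$ and, since $|x-w|\le 4$, it is $o(1)$ at infinity, hence bounded on all of $\tilde{\mathcal G}'$. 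This comparison --- the ``modification of a standard result'' the lemma alludes to --- together with the asymptotics for $a$ and the fact that linear interpolation along a segment changes a function by at most the (uniformly bounded) increment of its endpoint values, yields absolute constants $\gamma>0$ and $K<\infty$, independent of $N$ and of $w$, with
\begin{equation*}
\bigl|\tilde a_w(u)-\gamma\log(|u-w|+2)\bigr|\le K\qquad\text{for all }u\in\tilde{\mathcal G}'.
\end{equation*}

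Finally I would restrict $\tilde a_w$ to $\tilde{\mathcal G}'_N$ and set $f\df \tfrac{C}{\gamma}\,\tilde a_w+C_1$. Because harmonicity is local, $\tilde a_w$ restricted to $\tilde{\mathcal G}'_N$ remains harmonic at every point except $w$ and the points of $\partial V_N$ (where outgoing segments have been deleted and nothing is demanded of $f$), and rescaling by $C/\gamma>0$ and adding the constant $C_1$ preserves both harmonicity and linearity on segments; thus $f$ is harmonic on $\tilde{\mathcal G}'_N\setminus(\{w\}\cup\partial V_N)$. The displayed bound immediately gives $|f(u)-C\log(|u-w|+2)-C_1|\le (C/\gamma)K\df L(C)$, a quantity depending only on $C$, and then $f(u)\ge C\log(|u-w|+2)+C_1-L(C)\ge C\log 2+C_1-L(C)>0$ as soon as $C_1>L(C)$, which is the final ``in particular'' assertion.

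The step I expect to be the main obstacle is the middle one: producing $\tilde a_w$ and, above all, establishing the two-sided logarithmic bound \emph{uniformly in the pole $w$}, in particular when $w$ is not a lattice point. The comparison $\tilde a_w-\tilde a_x$ is precisely what reduces this to the well-understood behaviour of the $\mathbb Z^2$ potential kernel up to a bounded distortion; once it is in place, the rescaling, shift and positivity are routine.
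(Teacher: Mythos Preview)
Your approach is sound and reaches the goal, but it is genuinely different from the paper's. You invoke the cable potential kernel $\tilde a_w$ on the full metric graph $\tilde{\mathcal G}'$ and deduce its uniform logarithmic growth by comparing with the lattice-pole case. The paper instead writes down an explicit candidate on lattice points,
\[
f(u)=|v'-w|\,g(u-v)+|v-w|\,g(u-v')\qquad(u\in V_N),
\]
where $g$ is a large multiple of the $\mathbb Z^2$ potential kernel and $w$ sits on the $e(1)$-segment between lattice neighbours $v,v'$; it then extends linearly on segments, fixes $f(w)$ by a specific formula, and \emph{verifies by direct algebra} that the modified Kirchhoff condition holds at $v$ and at $v'$ (the only two lattice vertices where the presence of $w$ alters the local balance). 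The bound \eqref{f(u)-bound} then falls out of the known asymptotics of $g$ together with $|v-w|+|v'-w|$ being fixed. In effect, the paper's formula \emph{is} a concrete construction of the cable potential kernel at a non-lattice pole as a weighted combination of two lattice-pole kernels, with harmonicity checked by hand rather than by abstract existence.

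Your route is conceptually cleaner, but the step you flag as the obstacle is genuine: the claim ``$\tilde a_w-\tilde a_x$ is $o(1)$ at infinity, hence bounded'' already presupposes that $\tilde a_w$ has leading growth $\gamma\log|\cdot|$ independent of $w$, which is part of what you want. You can close this either by appealing to general potential-kernel theory for recurrent reversible processes (symmetry $\tilde a_w(x)=\tilde a_x(w)$ plus uniqueness of the growth coefficient), or---and this is effectively the paper's choice---by exhibiting $\tilde a_w$ explicitly and reading the asymptotics off. The paper's computation is heavier but self-contained; yours is lighter but leans on metric-graph potential theory that you would have to import or develop. One small correction: your parenthetical that cable harmonicity ``unwinds to the usual $\sum_{v'\sim v}f(v')=4f(v)$'' at lattice points is only true away from the two endpoints $v,v'$ of the segment carrying $w$; at those two vertices the balance condition is the modified one the paper verifies.
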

\begin{proof}
By \cite[(B17)]{Dunlop1992} or \cite[Theorem 4.4.4]{LL10}, there exist a function $g$ defined on $\mathbb{Z}^2$ and absolute constants $C, C'_1>0$ that can be taken to be arbitrarily large, such that $g$ is harmonic on $\mathbb{Z}^2\setminus \{(0,0)\}$ and $|g(u)-C\log (|u|+2)-C'_1|\leq L'(C)$ for all $u\in \mathbb{Z}^2$ (where $L'(C)$ is a function that only depends on $C$) --- in fact, the function $g$ is a multiple of the potential kernel for the simple random walk on $\mathbb Z^2$. Now let us define for $u\in \tilde {\mathcal G}'_N$
	\begin{equation*}
		f(u)=
		\begin{cases}
			|v'-w| g(u-v)+|v-w|g(u-v'), & \text{ if } u\in V_N\,,\\
			f(w),& \text{ if } u=w\,,\\
			\text{linear interpolation between $f(x)$ and $f(y)$}, & \text{ if } u\in e'=(x,y)\in E'_{N,w}\,,
		\end{cases}
	\end{equation*}
	where
	$$f(w)=(|v-w|^2+|v'-w|^2)g((0,0))-8|v-w||v'-w|Dg((0,0))+|v-w||v'-w|(g(v'-v)+g(v-v'))$$
	and
	$$Dg((0,0))=g((0,1))+g((0,-1))+g((1,0))+g((-1,0))-4g((0,0))\,.$$
	Then by definition, $f(u)$ is clearly harmonic on $\tilde {\mathcal G}'_N\setminus (\{v,v',w\} \cup \partial V_N)$. To show that it is also harmonic at $v$ and $v'$, we have to verify that
	$$(3+\frac 7 8+\frac 1 {8|v-w|})f(v)=\sum_{i=1}^3 f(v_i)+\frac 7 8 f(v')+\frac 1 {8|v-w|} f(w)$$
	and
	$$(3+\frac 7 8+\frac 1 {8|v'-w|})f(v')=\sum_{i=1}^3 f(v'_i)+\frac 7 8 f(v)+\frac 1 {8|v'-w|} f(w)$$
	where $v_1,v_2,v_3\in V_N$ are the three neighbors of $v$ other than $v'$, and $v'_1,v'_2,v'_3\in V_N$ are the three neighbors of $v'$ other than $v$. We give the details for verification of the first identity (the second one is similar) as follows:
	\begin{eqnarray*}
		&&\sum_{i=1}^3 f(v_i)+\frac 7 8 f(v')+\frac 1 {8|v-w|} f(w)\\
		&=&\sum_{i=1}^3 (|v'-w| g(v_i-v)+|v-w|g(v_i-v'))+\frac 7 8 (|v'-w| g(v'-v)+|v-w|g((0,0)))+\frac 1 {8|v-w|} f(w)\\
		&=&|v'-w|(Dg((0,0))+4g((0,0))-g(v'-v))+|v-w|(4g(v-v')-g((0,0)))\\
		&&+\frac 7 8 (|v'-w| g(v'-v)+|v-w|g((0,0)))+\frac 1 {8|v-w|} f(w)\\
		&=&(3+\frac 7 8+\frac 1 {8|v-w|})(|v'-w|g((0,0))+|v-w|g(v-v'))\\
		&=&(3+\frac 7 8+\frac 1 {8|v-w|})f(v)\,,
	\end{eqnarray*}
	where the penultimate equality follows by comparing the coefficients of $g((0,0))$, $Dg((0,0))$, $g(v'-v)$ and $g(v-v')$. For completeness, we record the detailed computations on these coefficients here:
	\begin{eqnarray*}
		g((0,0)):&& 4|v'-w|+\frac 1 {8|v-w|}|v'-w|^2-(3+\frac 7 8+\frac 1 {8|v-w|})|v'-w|\\
		&=&|v'-w|(4+\frac {1-|v-w|} {8|v-w|}-(3+\frac 7 8+\frac 1 {8|v-w|})) = 0\\
		&&\mbox{ and }
		-|v-w|+\frac 7 8 |v-w|+\frac 1 {8|v-w|}|v-w|^2=0\,;\\
		Dg((0,0)): &&|v'-w|+\frac 1 {8|v-w|}(-8|v-w||v'-w|)=0\,;\\
		g(v'-v):&&-|v'-w|+\frac 7 8|v'-w|+\frac 1 {8|v-w|}|v-w||v'-w|=0\,;\\
		g(v-v'):&& 4|v-w|+\frac 1 {8|v-w|}|v-w||v'-w|-(3+\frac 7 8+\frac 1 {8|v-w|})|v-w|\\
		&=&|v-w|(4+\frac {1-|v-w|} {8|v-w|}-(3+\frac 7 8+\frac 1 {8|v-w|})) =  0\,.
	\end{eqnarray*}
	Therefore we completed the verification that $f(u)$ is harmonic on $\tilde {\mathcal G}'_N\setminus \{w\}$, and \eqref{f(u)-bound} follows easily from our definition of $f(u)$. \qed
	\end{proof}
\begin{proof}[Proof of Lemma~\ref{X3-conditional-expectation}]
	If for some $0\leq i\leq k-1$, $v\in A^{<-\lambda}_i$, then clearly we have $\E(\tilde {\eta}'_{N, v}\mid H_=, H_+, H_-)\leq -\lambda-\frac 1 N\leq \lambda+\frac 1 N +C_0$. So in what follows we assume that (recall the definition of $J_3$ in Step 2 of the proof for  Proposition~\ref{prop-key-statement}) for some $0\leq i\leq k-1$, $v\in A^{>\lambda}_i$ and  there exists a $w\in B^{=\lambda}_{i+1} \subseteq B^{=\lambda}_k$ on an edge $e(1)=(v,v')\in E_N(1)$ --- in later analysis $w$ will serve as a point where the value of the GFF is pinned (see \eqref{FKG}), and since $v$ is close to $w$ this intuitively implies that the GFF value at $v$ cannot be too large.  The type of argument in what follows is known as the entropic repulsion estimates in the presence of a hard wall \cite{Dunlop1992,BDG01}. Our context is close to \cite{Dunlop1992} with some slight complication, and our proof essentially follows from the same line of arguments.

By Lemma~\ref{lem-harmonic-function},  there exists a positive function $f$ (which we choose) defined on $\tilde {\mathcal G}'_N$ such that $f$ is harmonic on $\tilde {\mathcal G}'_N\setminus ( \{w\} \cup \partial V_N)$ and \eqref{f(u)-bound} holds.
	We now claim that
	\begin{multline}\label{FKG}
		\E(\tilde {\eta}'_{N, v}\mid H_=, H_+, H_-)\leq \E(\tilde {\eta}'_{N+2, v}\mid \tilde {\eta}'_{N+2, w}=f(w)+\lambda+\frac 1 N,\ \tilde {\eta}'_{N+2, u}=f(u)+\lambda+\frac 1 N \ \forall u\in \partial V_N,\\
		\tilde {\eta}'_{N+2, u}>\lambda+\frac 1 N \ \forall u\in \Pi_N\setminus \{w\})\,.
	\end{multline}
	We remark that on the right hand side of \eqref{FKG}, we considered $\{\tilde \eta'_{N+2, \cdot}\}$ for the reason that formally the process $\{\tilde \eta'_{N, \cdot}\}$ takes value 0 on $\partial V_N$ and thus we are not allowed to condition on non-zero values on $\partial V_N$ for $\{\tilde \eta'_{N, \cdot}\}$. We further note that the law of $\{\tilde \eta'_{N+2, v}: v\in \mathcal G'_N\}$ under the conditioning of $\tilde \eta'_{N+2, v} = 0$ for $v\in \partial V_N$ is the same as the law of $\{\tilde \eta'_{N, v}: v\in \mathcal G'_N\}$.
	In order to show \eqref{FKG}, we follow \cite[Appendix B.1]{giacomin2001aspects}. We let $\mu$ be the law of $\{\tilde {\eta}'_{N+2, u}:u\in \Pi_N\}$ and we see that $\mu$ has density $\mu(\,dr)=\exp(-H(r))\,dr$ (here $r=(r_u)_{u\in \Pi_N}$ denotes a general $|\Pi_N|$ dimensional vector) such that for every $r, r'\in \mathbb R^{|\Pi_N|}$
	$$H(r\vee r')+H(r\wedge r')\leq H(r)+H(r')\,,$$
	where $\vee$ and $\wedge$ are intended coordinate by coordinate. 
	For $q>0$, we define
	\begin{equation}\label{def-UVW}
		U^{(q)}(t)=
		\begin{cases}
			qt^4, & \text{if } t<0\\
			0, & \text{if } t\geq 0\\
		\end{cases}
		,\quad
		V^{(q)}(t)=
		\begin{cases}
			0, & \text{if } t<0\\
			qt^4, & \text{if } t\geq 0\\
		\end{cases}
		,\quad
		W^{(q)}(t)=qt^4
	\end{equation}
	and
	\begin{align*}
		\mu_1^{(q)}(\,dr)\propto\exp (&-\sum_{u\in B_0\cup  (\cup _{i=1}^k B^{=\lambda}_i) \cup (\cup _{i=1}^k B^{=-\lambda}_i)} W^{(q)}(r_u-x_{N,u})-\sum_{u\in  I^{>\lambda}_k}U^{(q)}(r_u-\lambda-\frac 1 N)\\
		&-\sum_{u\in I^{<-\lambda}_k}V^{(q)}(r_u+\lambda+\frac 1 N)-\sum_{u\in \partial V_N} W^{(q)}(r_u))\mu(\,dr)\,,\\
		\mu_2^{(q)}(\,dr)\propto\exp (&-\sum_{u\in \{w\}\cup\partial V_N} W^{(q)}(r_u-f(u)-\lambda-\frac 1 N)-\sum_{u\in  \Pi_N\setminus \{w\}}U^{(q)}(r_u-\lambda-\frac 1 N))\mu(\,dr)\,.
	\end{align*}
	It is not hard to verify that for any real numbers $t_0<t_1$ and any pair of functions
	\begin{align*}
		(h_1(t),h_2(t))\in \{&(W^{(q)}(t-t_0), W^{(q)}(t-t_1)), (W^{(q)}(t-t_0),U^{(q)}(t-t_0)), (V^{(q)}(t-t_0),U^{(q)}(t-t_0)), \\
		&(0, U^{(q)}(t-t_0)), (U^{(q)}(t-t_0), U^{(q)}(t-t_1))\}\,,
	\end{align*}
	we have for every $t, t'\in \mathbb R$,
	$$h_2(t\vee t')+h_1(t\wedge t')\leq h_2(t)+h_1(t')\,,$$
	and therefore for any $q>0$,
	it follows from \cite{Preston74} (see also  \cite[Appendix B.1]{giacomin2001aspects}) that $\mu_1^{(q)}$ is stochastically smaller than $\mu_2^{(q)}$ ($\mu_1^{(q)}\prec\mu_2^{(q)}$), i.e., for any increasing function $F$ one has $\mu_1^{(q)}(F) \leq \mu_2^{(q)}(F)$. As $q\to \infty$, $\mu_1^{(q)}$ and $\mu_2^{(q)}$ will converge weakly to the conditional laws on the left and right hand sides of \eqref{FKG}, respectively. Therefore \eqref{FKG} is verified.
	
	Clearly, the right hand side of \eqref{FKG} equals
	$$\lambda+\frac 1 N+\E_{\tilde {\eta}'_{N+2, w}=f(w),\ \tilde {\eta}'_{N+2, u}=f(u) \ \forall u\in \partial V_N}(\tilde {\eta}'_{N+2, v}\mid \tilde {\eta}'_{N+2, u}>0\text{ for all } u\in \Pi_N\setminus \{w\})\,.$$
	Denote by $M$ the boundary condition $\tilde {\eta}'_{N+2, w}=f(w),\ \tilde {\eta}'_{N+2, u}=f(u) \ \forall u\in \partial V_N$. Now under $M$, for any $u\in\tilde {\mathcal G}'_N$, we have $\tilde {\eta}'_{N+2, u}$ is Gaussian with mean $\E_M(\tilde {\eta}'_{N+2, u})=f(u)$ and variance $\var_M (\tilde {\eta}'_{N+2, u})=G_{\tilde {\mathcal G}'_N\setminus \{w\}}(u,u)$. It is well known that there exists a constant $C_2>0$ such that $G_{V_N\setminus \{v\}}(u,u)\leq C_2 \log (|u-v|+2)$ for all $u\in V_N$. Therefore we have for all $u\in V_N$,
	\begin{equation}\label{variance-bound}
		\var_M (\tilde {\eta}'_{N+2, u})=G_{\tilde {\mathcal G}'_N\setminus \{w\}}(u,u)\leq 32(1+G_{V_N\setminus \{v\}}(u,u))\leq 32(1+C_2 \log (|u-v|+2))\,.
	\end{equation}
	In particular, for $u=v$ we have the following bound (using \eqref{f(u)-bound} and \eqref{variance-bound})
	\begin{equation}\label{mean-on-positivity}
		\E_{M}(\tilde {\eta}'_{N+2, v} \1_{\tilde {\eta}'_{N+2, u}>0 \text{ for all } u\in \Pi_N\setminus \{w\}})\leq \E_M(|\tilde {\eta}'_{N+2, v}|)\leq C_3\,,
	\end{equation}
	where $C_3$ is a positive constant which only depends on $C$ and $C_1$.
	%
	%
	
	It now remains to bound $\P_M(\tilde {\eta}'_{N+2, u}>0 \text{ for all } u\in \Pi_N\setminus \{w\})$ from below. We will do this by giving a lower bound of $\P_M(\tilde {\eta}'_{N+2, u}>0 \text{ for all } u\in \tilde {\mathcal G}'_N\setminus \{w\})$. First, by a union bound over all $u\in V_N$ and using the bounds in \eqref{f(u)-bound} and \eqref{variance-bound}, we have (first take $C$, then $C_1$ to be sufficiently large)
	\begin{equation}\label{all-positivity-1}
		\P_M(\tilde {\eta}'_{N+2, u}\geq f(u)/2  \text{ for all }  u\in V_N)\geq 1/2\,.
	\end{equation}
	Conditioned on the values $\tilde {\eta}'_{N+2, u}$ for all $u\in V_N$, for each segment $e'=(x,y)\in E'_{N,w}$, we have (here $d(x,y)$ denotes the distance between $x$ and $y$ in the metric graph $\tilde {\mathcal G}'_N$)
	$$\P(\tilde {\eta}'_{N+2, u}=0 \text{ for some } u\in e'\mid M, \mathcal F_{V_N})= \mathrm{e}^{-\tilde {\eta}'_{N+2, x}\tilde {\eta}'_{N+2, y}\cdot \frac 1 {d(x,y)}}\leq \mathrm{e}^{-\frac 1 {16} f(x)f(y)}$$
	on the event $\{\tilde {\eta}'_{N+2, u}\geq f(u)/2  \text{ for all }  u\in V_N\}$. By another union bound over all segments $e'\in E'_{N,w}$ and using \eqref{f(u)-bound}, we have on the same event (recall that $C$ is large),
	\begin{equation}\label{all-positivity-2}
		\P(\tilde {\eta}'_{N+2, u}>0 \text{ for all } u\in e' \text { and all } e'\in E'_{N,w}\mid  M, \mathcal F_{V_N})\geq 1/2\,.
	\end{equation}
	Combining \eqref{all-positivity-1} and \eqref{all-positivity-2} we have $\P_M(\tilde {\eta}'_{N+2, u}>0 \text{ for all } u\in \tilde {\mathcal G}'_N\setminus \{w\})\geq 1/4$, and therefore
	\begin{equation}\label{all-positivity-3}
		\P_M(\tilde {\eta}'_{N+2, u}>0 \text{ for all } u\in \Pi_N\setminus \{w\})\geq 1/4\,.
	\end{equation}
	Combining \eqref{FKG}, \eqref{mean-on-positivity} and \eqref{all-positivity-3},  we can complete the proof of the lemma by choosing $C_0=4C_3$.\qed
\end{proof}

\begin{lemma}\label{X3-conditional-variance}
	There exists a constant $C_4>0$ such that
	\begin{align*}
		\var (X_3\mid H_=, H_+, H_-)
		&\leq C_4 /\log N\,.
	\end{align*}
\end{lemma}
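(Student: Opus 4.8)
Below is a plan for proving Lemma~\ref{X3-conditional-variance}. The idea is to bound the conditional variance by a purely Gaussian computation obtained from the Brascamp--Lieb inequality, and then to exploit two structural facts: that every $v\in J_3$ lies within bounded $\tilde{\mathcal G}'_N$-distance of a point that is \emph{pinned} under $H_=$, and that the harmonic weights defining $X_3$ spread their mass on balls of radius $r$ at the Beurling rate $\sqrt{r/N}$.

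\medskip
\noindent\textbf{Step 1 (reduction to a Gaussian bound).} Write $X_3=\sum_{v\in J_3}a_v\,\tilde{\eta}'_{N,v}$ with $a_v\deq\frac{1}{|\partial V_{(1+\beta)N/2}|}\sum_{u\in\partial V_{(1+\beta)N/2}}\widetilde{\mathrm{Hm}}'(u,v;\temp\cup\partial V_N)\ge 0$, and set $B^{\ast}\deq B_0\cup B^{=\lambda}_k\cup B^{=-\lambda}_k$; note $\sum_{v\in J_3}a_v\le 1$ and $J_3\cap B^{\ast}=\emptyset$. Under $H_=$ the field $\{\tilde{\eta}'_{N,v}\}_{v\notin B^{\ast}\cup\partial V_N}$ is Gaussian with covariance $G_{\tilde{\mathcal G}'_N\setminus B^{\ast}}$, and $H_+\cap H_-$ is the event that this Gaussian vector lies in the convex (orthant-type) set $\{\tilde\eta'_{N,v}>\lambda+\tfrac1N\ \forall v\in I^{>\lambda}_k\}\cap\{\tilde\eta'_{N,v}<-\lambda-\tfrac1N\ \forall v\in I^{<-\lambda}_k\}$. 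Working with the $\Pi_N$-discretization and replacing the hard constraints by quartic barriers as in the proof of Lemma~\ref{X3-conditional-expectation}, the conditioned density is log-concave with Hessian dominating the inverse covariance, so the Brascamp--Lieb inequality \cite{BL76} gives
$$\var(X_3\mid H_=,H_+,H_-)\ \le\ \var(X_3\mid H_=)\ =\ \sum_{v,v'\in J_3}a_v\,a_{v'}\,G_{\tilde{\mathcal G}'_N\setminus B^{\ast}}(v,v')\,.$$

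\medskip
\noindent\textbf{Step 2 (pointwise Green's function bound).} Each $v\in J_3$ lies on an edge $e(1)=(v,v')\in E_N(1)$ carrying a point $w_v\in B^{=\lambda}_k\cup B^{=-\lambda}_k\subseteq B^{\ast}$ at $\tilde{\mathcal G}'_N$-distance at most $4$ from $v$, so $G_{\tilde{\mathcal G}'_N\setminus B^{\ast}}(v,v')\le G_{\tilde{\mathcal G}'_N\setminus\{w_v\}}(v,v')$. By the bound behind \eqref{variance-bound} (with killing point $w_v$) we have $G_{\tilde{\mathcal G}'_N\setminus\{w_v\}}(v,v)=O(1)$, and combining this with $G_{\tilde{\mathcal G}'_N\setminus\{w_v\}}(v,v')\le\P_{v'}(\tau_v<\tau_{\partial V_N})\,G_{\tilde{\mathcal G}'_N\setminus\{w_v\}}(v,v)$ and the elementary two-point identity $\P_{v'}(\tau_v<\tau_{\partial V_N})=G_{\tilde{\mathcal G}'_N}(v',v)/G_{\tilde{\mathcal G}'_N}(v,v)\le C\log(N/(|v-v'|\vee 1))/\log N$ (valid for $v,v'\in V_{\beta N}$), we obtain
$$G_{\tilde{\mathcal G}'_N\setminus B^{\ast}}(v,v')\ \le\ \frac{C}{\log N}\,\log\!\Big(\frac{N}{|v-v'|\vee 1}\Big)\qquad\text{for all }v,v'\in J_3\,.$$

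\medskip
\noindent\textbf{Step 3 (Beurling mass bound, dyadic summation, conclusion).} As in the proof of Lemma~\ref{lem-Makarov}, using that $D_k$ is connected of diameter at least $\alpha N$ and that $u\in\partial V_{(1+\beta)N/2}$ is macroscopically far from $D_k\subseteq V_{\beta N}$, one has $\widetilde{\mathrm{Hm}}'(u,v;\temp\cup\partial V_N)\le C\,\mathrm{Hm}(\infty,v;D_k)$ for $v\in J_3$, hence $a_v\le C\,\mathrm{Hm}(\infty,v;D_k)$. For any ball $B(x,r)$ with $r\le\alpha N/2$, if $B(x,r)\cap D_k\ne\emptyset$ then $D_k$ contains a connected crossing of the annulus $B(x,\alpha N/2)\setminus B(x,r)$, so by the discrete Beurling projection theorem \cite{Kesten87,LL04} a walk coming in from infinity hits $D_k$ inside $B(x,r)$ with probability at most $C\sqrt{r/N}$; thus (trivially for larger $r$)
$$\sum_{v\in B(x,r)\cap J_3}a_v\ \le\ C\sqrt{r/N}\qquad\text{for all }x,\ 1\le r\le 2N\,.$$
Decomposing the double sum over dyadic scales $2^{j}\le|v-v'|\vee 1<2^{j+1}$ and using $\sum_{v'\in B(v,2^{j+1})\cap J_3}a_{v'}\le C'2^{j/2}/\sqrt N$ together with $\sum_v a_v\le 1$ gives
$$\sum_{v,v'\in J_3}a_v\,a_{v'}\log\!\Big(\frac{N}{|v-v'|\vee 1}\Big)\ \le\ \sum_{0\le j\le\log_2 N}\big(\log N-j\log 2\big)_+\cdot\frac{C'\,2^{j/2}}{\sqrt N}\ =\ O(1)\,,$$
the sum being dominated by the top few scales. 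Combining Steps 1--3 yields $\var(X_3\mid H_=,H_+,H_-)\le C_4/\log N$.

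\medskip
I expect the technical heart to be the Beurling-rate mass estimate $\sum_{v\in B(x,r)\cap J_3}a_v\le C\sqrt{r/N}$ --- that is, turning ``$D_k$ connected of diameter $\ge\alpha N$'' into ``the harmonic measure from infinity of a diameter-$r$ portion of $D_k$ is $O(\sqrt{r/N})$'' via the discrete Beurling projection theorem --- together with the care needed for the metric-graph Green's function estimates when the process is killed at an interior point of an edge. The Brascamp--Lieb reduction (via the quartic-barrier approximation already set up for Lemma~\ref{X3-conditional-expectation}) and the dyadic bookkeeping should then be routine.
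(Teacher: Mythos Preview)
Your proof is correct and reaches the same $C_4/\log N$ bound, but by a genuinely different route from the paper after the common Brascamp--Lieb reduction (your Step~1 coincides with the paper's \eqref{Brascamp-Lieb}).

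The paper splits the double sum over $J_3\times J_3$ into ``bad'' pairs (one point within $(\log N)^{10}$ of $J_4$, or the two points within $(\log N)^{10}$ of each other) and ``good'' pairs. Bad pairs are handled by Makarov (Lemma~\ref{lem-Makarov}) applied to the small exceptional sets, paired with the crude bound $G\le C_5\log N$. For good pairs the paper proves the strong pointwise estimate $G_{\tilde{\mathcal G}'_N\setminus(J_1\cup J_2)}(u_1,u_2)\le C_6/\log N$ by iterating Beurling: the walk from $u_1$ repeatedly hits the $*$-connected set $J_1\cup J_2\cup J_4$ before escaping to scale $(\log N)^{10}$, and since $J_4$ is far it must hit $J_1\cup J_2$.

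Your approach avoids this decomposition entirely. You use only a \emph{single} nearby pinned point $w_v\in B^\ast$ to get the weaker but uniform bound $G_{\tilde{\mathcal G}'_N\setminus B^\ast}(v,v')\le \tfrac{C}{\log N}\log(N/(|v-v'|\vee 1))$, and compensate by controlling how the harmonic weights $a_v$ concentrate on balls via the Beurling projection bound $\sum_{v\in B(x,r)}a_v\le C\sqrt{r/N}$; a dyadic summation then closes. This is more elementary in that Makarov's theorem is not invoked at all for the variance estimate, and it sidesteps the iterated-Beurling argument. The paper's route, on the other hand, yields the sharper Green's function bound $O(1/\log N)$ for well-separated pairs, which may be of independent use. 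Both arguments ultimately rest on Beurling and the fact that every $v\in J_3$ sits next to a pinned point of $B^\ast$.
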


\begin{proof}
	We first claim that
	\begin{equation}\label{Brascamp-Lieb}
		\var (X_3\mid H_=, H_+, H_-)\leq \var (X_3\mid H_=)=\var (X_3\mid \mathcal{F}_{J_1\cup J_2})\,.
	\end{equation}
	To show this, we use the Brascamp-Lieb inequality \cite{BL76} (see also \cite[Appendix B.2]{giacomin2001aspects}). Denote by $\mu$ the law of $Z$ where $Z$ is distributed as $\{\tilde {\eta}'_{N, u}:u\in \Pi_N\setminus(J_1\cup J_2)\}$ conditioned on $H_=$. Then $Z$ is a finite dimensional Gaussian vector. Let $m$ and $A$ be its mean vector and covariance matrix, respectively. The density of $\mu$ is of the form $\mu(\,dr)\propto\exp(-\frac 1 2 (r-m)\cdot A^{-1}(r-m))\,dr$. For any $q>0$, consider the measure
	$$\mu^{(q)}(\,dr)\propto\exp (-\sum_{u\in  I^{>\lambda}_k}U^{(q)}(r_u-\lambda-\frac 1 N)-\sum_{u\in I^{<-\lambda}_k}V^{(q)}(r_u+\lambda+\frac 1 N))\mu(\,dr)\,,$$
	where $U^{(q)}$ and $V^{(q)}$ are as defined in \eqref{def-UVW}. Since the second order derivatives of $U^{(q)}$ and $V^{(q)}$ are both nonnegative, we see that the density of $\mu^{(q)}$ is of the form $\mu^{(q)}(\,dr)=\exp(-H(r))\,dr$ where $\inf_r\text{Hess}(H)(r)\geq \frac 1 2 A^{-1}$. Therefore, by the Brascamp-Lieb inequality, for the random vector $Y^{(q)}\sim\mu^{(q)}$ and for every $l\in \mathbb R^{|\Pi_N\setminus(J_1\cup J_2)|}$, we have
	$\var(l\cdot Y^{(q)})\leq \var (l\cdot Z)$. 
	Since as $q\to\infty$, the law of $Y^{(q)}$ (i.e. $\mu^{(q)}$) converges weakly to the law of $Z$ conditioned on $H_+$ and $H_-$, we see that
	$$\var(l\cdot Z\mid H_+, H_-)\leq \var (l\cdot Z)\,.$$
	Note that
	$$\var (X_3\mid H_=, H_+, H_-)=\var (\frac {1}{ |\partial V_{(1+\beta) N/2}|}\sum_{v\in  \partial V_{(1+\beta) N/2}}\sum_{u\in J_3} \widetilde{\mathrm{Hm}}'(v, u; \temp \cup \partial V_N) \cdot \tilde {\eta}'_{N, u}\mid H_=, H_+, H_-)\,.$$
	Thus, by setting $l_u=\frac {1}{ |\partial V_{(1+\beta) N/2}|}\sum\limits_{v\in  \partial V_{(1+\beta) N/2}} \widetilde{\mathrm{Hm}}'(v, u; \temp \cup \partial V_N)$ for $u\in J_3$ and 0 otherwise, this gives the inequality \eqref{Brascamp-Lieb}.
	
	Now let us define
	$$U_1=\{u_1\in J_3:|u_1-u|\geq (\log N)^{10}\text{ for all } u\in J_4\}$$
	and for $u_1\in U_1$, define
	$$U_2(u_1)=\{u_2\in J_3:|u_1-u_2|\geq (\log N)^{10}\}\,.$$
	For $u_1,u_2\in J_3$, we say a pair $(u_1,u_2)$ is good if $u_1\in U_1$ and $u_2\in U_2(u_1)$. We can expand the right hand side of \eqref{Brascamp-Lieb} as follows (where we write $\mathcal I_{k, N} = \temp \cup \partial V_N$):
	\begin{eqnarray}\label{expand-var-X3}
		&&\var (\sum_{v\in  \partial V_{(1+\beta) N/2}}\sum_{u\in J_3} \widetilde{\mathrm{Hm}}'(v, u; \mathcal I_{k, N}) \cdot \tilde {\eta}'_{N, u}\mid \mathcal{F}_{J_1\cup J_2})\nonumber\\
		&=&\sum_{v_1,v_2\in  \partial V_{(1+\beta) N/2}}\sum_{u_1\in J_3\setminus U_1}\sum_{u_2\in J_3}\widetilde{\mathrm{Hm}}'(v_1, u_1; \mathcal I_{k, N})\widetilde{\mathrm{Hm}}'(v_2, u_2;\mathcal I_{k, N})G_{\tilde {\mathcal G}'_N\setminus  (J_1\cup J_2)}(u_1,u_2)\nonumber\\
		&&+\sum_{v_1,v_2\in  \partial V_{(1+\beta) N/2}}\sum_{u_1\in U_1}\sum_{u_2\in J_3\setminus U_2(u_1)}\widetilde{\mathrm{Hm}}'(v_1, u_1; \mathcal I_{k, N})\widetilde{\mathrm{Hm}}'(v_2, u_2; \mathcal I_{k, N})G_{\tilde {\mathcal G}'_N\setminus  (J_1\cup J_2)}(u_1,u_2)\nonumber\\
		&&+\sum_{v_1,v_2\in  \partial V_{(1+\beta) N/2}}\sum_{(u_1,u_2)\text { is good }}\widetilde{\mathrm{Hm}}'(v_1, u_1; \mathcal I_{k, N} )\widetilde{\mathrm{Hm}}'(v_2, u_2;\mathcal I_{k, N})G_{\tilde {\mathcal G}'_N\setminus  (J_1\cup J_2)}(u_1,u_2)\,.
	\end{eqnarray}
	
	Recall that we have $|J_4|\leq N \mathrm{e}^{-(\log N)^{\chi}}$. By a simple volume consideration, we have $|J_3\setminus U_1|\leq N \mathrm{e}^{-(\log N)^{\chi}}(\log N)^{21}$ and $|J_3\setminus U_2(u_1)|\leq N \mathrm{e}^{-(\log N)^{\chi}}(\log N)^{21}$ for $u_1\in U_1$. Therefore, for any $v_1,v_2\in  \partial V_{(1+\beta) N/2}$, we can deduce from Lemma~\ref{lem-Makarov} that (see \eqref{eq-harmonic-measure-J-4} for a similar derivation)
	\begin{equation}\label{U_1}
		\widetilde{\mathrm{Hm}}'(v_1, J_3\setminus U_1; \temp \cup \partial V_N)=o(\log N)^{-10}
	\end{equation}
	and
	\begin{equation}\label{U_2}
		\widetilde{\mathrm{Hm}}'(v_2, J_3\setminus U_2(u_1); \temp \cup \partial V_N)=o(\log N)^{-10}\,.
	\end{equation}
	It is well known that for a constant $C_5>0$, we have for any $u_1,u_2\in \tilde {\mathcal G}'_N$
	\begin{equation}\label{cov-log N}
		G_{\tilde {\mathcal G}'_N\setminus (J_1\cup J_2)}(u_1,u_2)\leq G_{\tilde {\mathcal G}'_N}(u_1,u_2) \leq C_5\log N\,.
	\end{equation}
	We claim that there exists a constant $C_6>0$, such that if $(u_1,u_2)$ is good, then 
	\begin{equation}\label{good}
		G_{\tilde {\mathcal G}'_N\setminus (J_1\cup J_2)}(u_1,u_2)\leq\frac {C_6} {\log N}\,.
	\end{equation}
Provided with \eqref{good}, we can substitute \eqref{U_1}, \eqref{U_2}, \eqref{cov-log N} and \eqref{good} into \eqref{expand-var-X3} and complete the proof of the lemma.

Therefore, it remains to  prove \eqref{good}. The key ingredient in proving \eqref{good} is that if $B^{\tilde {\mathcal G}'_N}$ is started at $u_1$, then the probability that it goes $(\log N)^{10}$ away from $u_1$ before hitting $J_1\cup J_2$ is, say, less than $\frac {C_7} {(\log N)^2}$ for a constant $C_7>0$.	To show this, we use the Beurling's estimate (see, e.g., \cite[Theorem 6.8.1]{LL10}). We observe that $J_1\cup J_2\cup J_4$ (as the ``outer boundary'' of $\temp$) is a $*$-connected set (where we regard two vertices as neighbors if their $\ell_\infty$-distance is at most 1) with diameter of order $N$, and thus
	$$V\deq\{v\in V_N:|v-u|\leq 1 \text{ for some } u\in J_1\cup J_2\cup J_4\}$$
	is a connected set with diameter of order $N$. In particular, by the  definition of $J_3$ we have $J_3\subseteq V$. By Beurling's estimate, once $B^{\tilde {\mathcal G}'_N}$ is at $v\in V$, it will hit $V$ again before going $(\log N)^6$ away from $v$, with probability at least $1-\frac {C_8} {(\log N)^3}$ (where $C_8>0$ is an absolute constant). Thus, if  $B^{\tilde {\mathcal G}'_N}$ is started at $u_1\in J_3$, then with probability at least $1-\frac {C_8} {(\log N)^2}$, it will hit $V$ at least $\log N$ times, before going $(\log N)^7$ away from $u_1$. However, it is clear that if $B^{\tilde {\mathcal G}'_N}$ is at $v\in V$, then it has at least constant probability ($\geq 1/32$) to hit $J_1\cup J_2\cup J_4$ before (or at) hitting a neighbor of $v$. Therefore, at these $\log N$ times that $B^{\tilde {\mathcal G}'_N}$ hits $V$ (before going $(\log N)^7$ away from $u_1$), it has at least $1-\frac 1 {N^{\log {\frac {32}{31}}}}$ probability to hit $J_1\cup J_2\cup J_4$ at least once in the following step, and since $J_4$ is $(\log N)^{10}$ away from $u_1$, it must hit $J_1\cup J_2$. That is to say, the probability that $B^{\tilde {\mathcal G}'_N}$ hits $J_1\cup J_2$ before going $(\log N)^{10}$ away from $u_1$ is at least $(1-\frac {C_8} {(\log N)^2})(1-\frac 1 {N^{\log {\frac {32}{31}}}})$, which is greater than $1-\frac {C_7} {(\log N)^2}$ for any $C_7>C_8$.  Now since $|u_1-u_2|\geq (\log N)^{10}$, and the expected number of visits of $u_2$ by $B^{\tilde {\mathcal G}'_N}$ is by \eqref{cov-log N} at most $C_5\log N$, we see that \eqref{good} is valid with $C_6=C_7C_5$. \qed
\end{proof}

\bigskip

\begin{acknowledgement}
We are most grateful to Greg Lawler for numerous stimulating discussions, including introducing the percolation problem on random walk loop soups and explaining his work on Makarov's theorem. We thank Marek Biskup for helpful discussions, intended for a different project but turned out relevant for the current article. We also thank Hubert Lacoin for helping locating \cite{Dunlop1992}.  As always, we thank Steve Lalley for his constant encouragement and support, and many useful discussions. The authors warmly acknowledge support by NSF grant DMS-1455049, DMS-1757479 and an Alfred Sloan fellowship. 
\end{acknowledgement}

\small


\begin{thebibliography}{10}
\providecommand{\url}[1]{{#1}}
\providecommand{\urlprefix}{URL }
\expandafter\ifx\csname urlstyle\endcsname\relax
  \providecommand{\doi}[1]{DOI~\discretionary{}{}{}#1}\else
  \providecommand{\doi}{DOI~\discretionary{}{}{}\begingroup
  \urlstyle{rm}\Url}\fi

\bibitem{AB99}
Aizenman, M., Burchard, A.: H\"older regularity and dimension bounds for random
  curves.
\newblock Duke Math. J. \textbf{99}(3), 419--453 (1999).

\bibitem{ADS15}
Aizenman, M., Duminil-Copin, H., Sidoravicius, V.: Random currents and
  continuity of {I}sing model's spontaneous magnetization.
\newblock Comm. Math. Phys. \textbf{334}(2), 719--742 (2015).

\bibitem{Bass95}
Bass, R.F.: Probabilistic techniques in analysis.
\newblock Probability and its Applications (New York). Springer-Verlag, New
  York (1995).

\bibitem{BDG01}
Bolthausen, E., Deuschel, J.D., Giacomin, G.: Entropic repulsion and the
  maximum of the two-dimensional harmonic crystal.
\newblock Ann. Probab. \textbf{29}(4), 1670--1692 (2001).

\bibitem{BL76}
Brascamp, H.J., Lieb, E.H.: On extensions of the {B}runn-{M}inkowski and
  {P}r\'ekopa-{L}eindler theorems, including inequalities for log concave
  functions, and with an application to the diffusion equation.
\newblock J. Functional Analysis \textbf{22}(4), 366--389 (1976).

\bibitem{Burdzy}
Burdzy, K.: My favorite open problems. 
\newblock Available at \verb|www.math.washington.edu/~burdzy/open_mathjax.php|.

\bibitem{Carleson73}
Carleson, L.: On the distortion of sets on a {J}ordan curve under conformal
  mapping.
\newblock Duke Math. J. \textbf{40}, 547--559 (1973).

\bibitem{CP12}
{\v{C}}ern{\'y}, J., Popov, S.: On the internal distance in the interlacement
  set.
\newblock Electron. J. Probab. \textbf{17}, no. 29, 25 (2012).

\bibitem{Chang15}
Chang, Y.: Supercritical loop percolation on $\mathbb {Z}^d$ for $d\geq 3$.
 \newblock Preprint, arXiv:1504.07906.

\bibitem{CS16}
Chang, Y., Sapozhnikov, A.: Phase transition in loop percolation.
\newblock Probab. Theory Related Fields \textbf{164} (2016).

\bibitem{Chayes95}
Chayes, L.: Aspects of the fractal percolation process.
\newblock In: Fractal geometry and stochastics ({F}insterbergen, 1994),
  \emph{Progr. Probab.}, vol.~37, pp. 113--143. Birkh\"auser, Basel (1995).

\bibitem{Chayes96}
Chayes, L.: On the length of the shortest crossing in the super-critical phase
  of {M}andelbrot's percolation process.
\newblock Stochastic Process. Appl. \textbf{61}(1), 25--43 (1996).

\bibitem{DHS17}
Damron, M., Hanson, J., Sosoe, P.: Strict inequality for the chemical distance exponent in two-dimensional critical percolation.
  \newblock Preprint, arXiv:1708.03643.

\bibitem{DHS15}
Damron, M., Hanson, J., Sosoe, P.: On the chemical distance in critical
  percolation.
\newblock Electron. J. Probab. \textbf{22}, Paper No. 75, 43pp. (2017).

\bibitem{Ding14}
Ding, J.: Asymptotics of cover times via {G}aussian free fields: bounded-degree
  graphs and general trees.
\newblock Ann. Probab. \textbf{42}(2), 464--496 (2014).

\bibitem{DD16}
Ding, J., Dunlap, A.: Liouville first passage percolation: subsequential
  scaling limits at high temperatures.
  \newblock Preprint, arXiv:1605.04011.

\bibitem{DG15}
Ding, J., Goswami, S.: First passage percolation on the exponential of
  two-dimensional branching random walk.
  \newblock Accepted by {\it Electron. Commun.
  Probab.}, available as arXiv:1511.06932.

\bibitem{DG16}
Ding, J., Goswami, S.: Upper bounds on liouville first passage percolation and
  {W}atabiki's prediction.
  \newblock Preprint, arXiv:1610.09998.

\bibitem{DZ15}
Ding, J., Zhang, F.: Non-universality for first passage percolation on the
  exponential of log-correlated {G}aussian fields.
\newblock Probab. Theory Related Fields. to appear.

\bibitem{DRS14}
Drewitz, A., R{\'a}th, B., Sapozhnikov, A.: On chemical distances and shape
  theorems in percolation models with long-range correlations.
\newblock J. Math. Phys. \textbf{55}(8), 083,307, 30 (2014).

\bibitem{Dunlop1992}
Dunlop, F., Magnen, J., Rivasseau, V., Roche, P.: Pinning of an interface by a
  weak potential.
\newblock Journal of Statistical Physics \textbf{66}(1), 71--98 (1992).

\bibitem{DLLL93}
Duplantier, B., Lawler, G.F., Le~Gall, J.F., Lyons, T.J.: The geometry of the
  {B}rownian curve.
\newblock Bull. Sci. Math. \textbf{117}(1), 91--106 (1993).

\bibitem{FKG71}
Fortuin, C.M., Kasteleyn, P.W., Ginibre, J.: Correlation inequalities on some
  partially ordered sets.
\newblock Comm. Math. Phys. \textbf{22}, 89--103 (1971).

\bibitem{giacomin2001aspects}
Giacomin, G.: Aspects of statistical mechanics of random surfaces.
\newblock Notes of lectures given at IHP, fall  (2001).

\bibitem{Kesten87}
Kesten, H.: Hitting probabilities of random walks on {${\bf Z}\sp d$}.
\newblock Stochastic Process. Appl. \textbf{25}(2), 165--184 (1987).

\bibitem{Lawler16}
Lawler, G.F.: Topics in loop measures and the loop-erased walk.
\newblock Preprint, arXiv:1709.07531.

\bibitem{Lawler91}
Lawler, G.F.: Intersections of Random Walks.
\newblock Probability and its Applications. Birkh{\"a}user Boston (1991).

\bibitem{Lawler93}
Lawler, G.F.: A discrete analogue of a theorem of {M}akarov.
\newblock Combin. Probab. Comput. \textbf{2}(2), 181--199 (1993).

\bibitem{LL04}
Lawler, G.F., Limic, V.: The {B}eurling estimate for a class of random walks.
\newblock Electron. J. Probab. \textbf{9}, no. 27, 846--861 (2004).

\bibitem{LL10}
Lawler, G.F., Limic, V.: Random walk: a modern introduction, \emph{Cambridge
  Studies in Advanced Mathematics}, vol. 123.
\newblock Cambridge University Press, Cambridge (2010).

\bibitem{LT07}
Lawler, G.F., Trujillo~Ferreras, J.A.: Random walk loop soup.
\newblock Trans. Amer. Math. Soc. \textbf{359}(2), 767--787 (electronic) (2007).

\bibitem{LW04}
Lawler, G.F., Werner, W.: The {B}rownian loop soup.
\newblock Probab. Theory Related Fields \textbf{128}(4), 565--588 (2004).

\bibitem{LeJan11}
Le~Jan, Y.: Markov paths, loops and fields, \emph{Lecture Notes in
  Mathematics}, vol. 2026.
\newblock Springer, Heidelberg (2011).
\newblock Lectures from the 38th Probability Summer School held in Saint-Flour,
  2008, {\'E}cole d'{\'E}t{\'e} de Probabilit{\'e}s de Saint-Flour
  [Saint-Flour Probability Summer School].

\bibitem{Lupu14}
Lupu, T.: From loop clusters and random interlacements to the free field.
\newblock Ann. Probab. \textbf{44}(3), 2117--2146 (2016).

\bibitem{Lupu14b}
Lupu, T.: Loop percolation on discrete half-plane.
\newblock Electron. Commun. Probab. \textbf{21}, 9 pp. (2016).

\bibitem{LW16}
Lupu, T., Werner, W.: The random pseudo-metric on a graph defined via the
  zero-set of the {G}aussian free field on its metric graph.
\newblock Probab. Theory Related Fields. to appear.

\bibitem{LW15}
Lupu, T., Werner, W.: A note on ising random currents, {I}sing-{FK}, loop-soups
  and the {G}aussian free field.
\newblock Electron. Commun. Probab. \textbf{21}, 7 pp. (2016).

\bibitem{Makarov85}
Makarov, N.G.: On the distortion of boundary sets under conformal mappings.
\newblock Proc. London Math. Soc. (3) \textbf{51}(2), 369--384 (1985).

\bibitem{MR06}
Marcus, M.B., Rosen, J.: Markov processes, {G}aussian processes, and local
  times, \emph{Cambridge Studies in Advanced Mathematics}, vol. 100.
\newblock Cambridge University Press, Cambridge (2006).

\bibitem{MP73}
McMillan, J.E., Piranian, G.: Compression and expansion of boundary sets.
\newblock Duke Math. J. \textbf{40}, 599--605 (1973).

\bibitem{Orz98}
Orzechowski, M.E.: A lower bound on the box-counting dimension of crossings in
  fractal percolation.
\newblock Stochastic Process. Appl. \textbf{74}(1), 53--65 (1998).

\bibitem{Preston74}
Preston, C.J.: A generalization of the {${\rm FKG}$} inequalities.
\newblock Comm. Math. Phys. \textbf{36}, 233--241 (1974).

\bibitem{RS13}
Rodriguez, P.F., Sznitman, A.S.: Phase transition and level-set percolation for
  the {G}aussian free field.
\newblock Comm. Math. Phys. \textbf{320}(2), 571--601 (2013).

\bibitem{Rosen14}
Rosen, J.: Lectures on isomorphism theorems.
\newblock Preprint, arXiv:1407.1559.

\bibitem{SW12}
Sheffield, S., Werner, W.: Conformal loop ensembles: the {M}arkovian
  characterization and the loop-soup construction.
\newblock Ann. of Math. (2) \textbf{176}(3), 1827--1917 (2012).

\bibitem{SS09}
Sidoravicius, V., Sznitman, A.S.: Percolation for the vacant set of random
  interlacements.
\newblock Comm. Pure Appl. Math. \textbf{62}(6), 831--858 (2009).


\bibitem{Spitzer76}
Spitzer, F.: Principles of random walk. \emph{Graduate Texts in Mathematics}, Vol. 34.
\newblock Springer-Verlag, New York-Heidelberg, 1976.

\bibitem{Sznitman10}
Sznitman, A.S.: Vacant set of random interlacements and percolation.
\newblock Ann. of Math. (2) \textbf{171}(3), 2039--2087 (2010).

\bibitem{Sznitman12}
Sznitman, A.S.: Topics in occupation times and {G}aussian free fields.
\newblock Zurich Lectures in Advanced Mathematics. European Mathematical
  Society (EMS), Z\"urich (2012).

\bibitem{Sznitman15}
Sznitman, A.S.: Disconnection and level-set percolation for the {G}aussian free
  field.
\newblock J. Math. Soc. Japan \textbf{67}(4), 1801--1843 (2015).

\bibitem{Werner15}
Werner, W.: On the spatial {M}arkov property of soups of unoriented and
  oriented loops.
\newblock In: S\'eminaire de {P}robabilit\'es {XLVIII}, \emph{Lecture Notes in
  Math.}, vol. 2168, pp. 481--503. Springer, Cham (2016).

\bibitem{Zhai14}
Zhai, A.: Exponential concentration of cover times  (2014).
\newblock Preprint, arXiv:1407.7617.
\end{thebibliography}
\end{document}